\documentclass[12pt]{amsart}

\usepackage{amsmath}
\usepackage{amsfonts}
\usepackage{amssymb} 

\usepackage{url} 
\usepackage{bbm}
\usepackage{xcolor}

\usepackage{multicol}
\usepackage{pst-fractal}
\usepackage{comment} 
\usepackage{hyperref,cleveref}
\usepackage[backend=biber, style=alphabetic, maxbibnames=99]{biblatex}
\usepackage[T1]{fontenc}
\usepackage{newpxtext,newpxmath} 
\usepackage[utf8]{inputenc}
		   \usepackage{palatino}

\usepackage[normalem]{ulem} 
\usepackage{tikz-cd}
\usepackage{enumerate}
\usepackage{layout}

\hypersetup{colorlinks=true,
linkcolor=blue, citecolor=blue, urlcolor=blue}
\usepackage{fullpage} 
\usepackage[margin=1in]{geometry}

\newtheorem{thm}{Theorem}[section]       

\newtheorem{cor}[thm]{Corollary}
\newtheorem{lem}[thm]{Lemma}
\newtheorem{prop}[thm]{Proposition}

\theoremstyle{definition}
\newtheorem{defn}[thm]{Definition}

\newtheorem{rmk}[thm]{Remark}

\numberwithin{equation}{section}

\DeclareMathOperator{\id}{Id}
\DeclareMathOperator{\Span}{span}

\renewcommand{\max}{max}
\DeclareMathOperator{\MAX}{MAX}
\DeclareMathOperator{\MIN}{MIN}

\DeclareMathOperator{\ball}{\text{Ball}}
\DeclareMathOperator{\sphere}{\text{Sphere}}

\newcommand{\ran}{\mathrm{ran} \,}

\newcommand{\N}{\mathbb{N}}
\newcommand{\R}{\mathbb{R}}
\newcommand{\C}{\mathbb{C}}

\newcommand{\e}{\epsilon}
\newcommand{\vr}{\varepsilon}

\renewcommand{\a}{\alpha}

\newcommand{\vp}{\varphi}

\newcommand{\expe}{\mathbb{E}}

\newcommand{\tr}{\text{tr}}

\makeatletter
\DeclareRobustCommand\smileotimes{\mathbin{\mathpalette\smile@otimes\relax}}
\newcommand{\smile@otimes}[2]{%
  \vbox{
    \ialign{##\cr
      \hidewidth$\m@th#1{}_\smile$\kern-\scriptspace\hidewidth\cr
      \noalign{\nointerlineskip\kern-1pt}
      $\m@th#1\otimes$\cr
    }%
  }%
}
\makeatother

\makeatletter
\DeclareRobustCommand\frownotimes{\mathbin{\mathpalette\frown@otimes\relax}}
\newcommand{\frown@otimes}[2]{%
  \vbox{
    \ialign{##\cr
      \hidewidth$\m@th#1{}_\frown$\kern-\scriptspace\hidewidth\cr
      \noalign{\nointerlineskip\kern-1pt}
      $\m@th#1\otimes$\cr
    }%
  }%
}
\makeatother

\newcommand{\tri}[1]{| \! | \! |{#1}| \! | \! |}

\newcommand{\norm}[1]{\left\lVert #1 \right\rVert}

\newcommand{\bb}[1]{\mathbb{#1}}
 
\newcommand{\cc}[1]{\mathcal{#1}}

\newcommand{\cbnorm}[1]{\left\lVert #1 \right\rVert_{\tx{cb}}}

\renewcommand{\sp}[2]{\langle #1, #2 \rangle} 
\renewcommand{\mp}[2]{\langle \langle #1, #2 \rangle \rangle} 

\newcommand{\ncconvexhull}[1]{\tx{nco}(#1)}
\newcommand{\clncconvexhull}[1]{\overline{\tx{nco}}(#1)}

\newcommand{\tx}[1]{\text{#1}}

\newcommand{\ket}[1]{\vert #1 \rangle}
\newcommand{\bra}[1]{\langle #1 \vert}
\newcommand{\op}[2]{\ket{#1}{\bra{#2}}}

\newcommand{\tra}[1]{\mathrm{tr}{#1}}

\newcommand{\oper}[1]{\mathbf{op}({#1})}

\def\query#1{\setlength\marginparwidth{55pt}%
\marginpar{\raggedright\fontsize{10}{10}\selectfont\itshape
\hrule\smallskip
{\textcolor{red}{#1}}\par\smallskip\hrule}}

\newcommand{\timur}[1]{{\textcolor{blue}{{\bf T.O:} #1}}}

\title{On Matricial Order Operator Spaces} 
\author{Roy Araiza and Timur Oikhberg}

\begin{document}
 
\begin{abstract}
We investigate the category of ``matricial order operator spaces,'' which generalize operator systems, being equipped with both matricial norms and matricial order.
For these objects, we develop duality theory. Taking a cue from the theory of ordered normed spaces, we introduce two important properties describing the interplay between order and norm -- ``normality'' and ``generation,'' and show that they are dual to each other. As examples, we consider operator systems (in particular, C$^*$-algebras), and Schatten spaces.
We also describe the minimal and maximal matricial order structures (which, again, turn out to be in duality), and show how Banach lattices can be equipped with such structures.
\end{abstract}

 \thanks{During this project the first author was supported as a J.L. Doob Research Assistant Professor at the University of Illinois at Urbana-Champaign, and as a Trond Mohn Stiftelse Guest Professor at Universitetet I Oslo. The authors wish to express their gratitude to E.~Bilokopytov and E.~Pernecka for many helpful comments.}
 \address{Department of Mathematics, University of Illinois at Urbana-Champaign, Urbana, IL}
 \email{raraiza@illinois.edu}
 \email{oikhberg@illinois.edu}
 
\maketitle

\tableofcontents

\section{Introduction}\label{intro}

The present paper is devoted to investigating matricial order operator spaces (defined below). Our goal is to unify two streams of Functional Analysis: (i) the theory of ordered normed spaces, and (ii) operator space theory.

Ordered spaces (such as function spaces) have been studied since the dawn of functional analysis. For a general approach, we refer the reader to e.g. \cite{batty1984positive} or \cite{Aliprantis-Tourky}. Below, we recall the facts and definitions relevant to this work.

Operator spaces arose in the late 1980s, as subspaces of $C^*$-algebras. Alternatively, one equips a given normed space with a matricial structure, satisfying the so called Ruan's Axioms. Accessible monographs on the topic include \cite{paulsen_book}, \cite{Pisier-Op-Spaces}, and \cite{effros-ruan-book}; again, the most pertinent results are recalled below.

From the beginning of operator space theory, people were interested in (\emph{unital}) \emph{operator systems} -- that is, involution-invariant subspaces of $B(H)$ which contain the identity, and hence naturally carry ``matricial order'' (see e.g.~\cite{paulsen_book}).
Instead of concrete embeddings into $B(H)$, one can give ``abstract'' descriptions of such objects, involving interactions of the unit, the matricial norms, and the matricial order, see \cite{choi1977injectivity}.
 
Recently, general (not necessarily unital) ordered operator spaces have come into focus, due to their connections to non-commutative convexity (see e.g. \cite{kennedy-kim-manor2025}).
Here, the order is required to satisfy certain conditions (which we make explicit below), but doesn't need to arise from a specific embedding into $B(H)$.
In fact, \cite{Karn} already notes the difficulty of obtaining such embeddings of duals of $C^*$-algebras.

All this points to the necessity of developing a general theory of ``matricial order'' on operator spaces, which is the purpose of the present work.
We begin this paper by laying the necessary foundation. In \Cref{s:order Banach}, we briefly introduce the notion of an ordered Banach space, and review ``order'' versions of local reflexivity.
Apart from being of independent interest, local reflexivity will also be used to describe the duality between minimal and maximal matricial order structures in \Cref{min max structures}. 
\Cref{prelim} examines matricially normed spaces (operator spaces), and also matricial convexity and duality.

We then move on to the core of the present work -- namely, the examination of matricial order. \Cref{s: basic properties} defines \emph{matricial order operator spaces}, and introduces their basic properties, such as normality and generation (their classical counterparts are described in \Cref{s:order Banach}).
In \Cref{positivisation} we describe a way to renorm a normal, geneating matricial order space to make it $1$-normal and $1$-generating.

It is easy to observe that the dual of a matrical order operator space can be equipped with a matricial order operator space structure. In \Cref{ss:duality} we show that the dual of $\cc X$ is normal if and only if $\cc X$ itself is generating, and vice versa.

Taking a cue from the theory of operator spaces, in \Cref{min max structures} we define the maximal and minimal matricial order structures on a given ordered Banach space, and show that they are in duality. We also develop criteria for normality and generation of these ``maximal'' and ``minimal'' matricial order operator spaces.

\Cref{ss:examples} is devoted to investigating ``natural'' examples of matricial order operator spaces -- namely, $C^*$-algebras (and operator systems more generally) and Schatten spaces. In \Cref{lattice} we study a new class of matricial order operator spaces obtained by equipping Banach lattices with their minimal or the maximal structures.

For functional analysis background, see \cite{AK}. We mostly adhere to the standard notation; however, some special symbols are used, to wit:

$\bullet$ If $\cc X$ is a vector space, we shall use $\cc X^\flat$ for a space of linear functionals on it. 
The dual action (linear in both variables) is denoted by $\langle \cdot, \cdot \rangle : \cc X^\flat \times \cc X \to \C$.

If $\cc X$ is a topological vector space, then $\cc X^\flat$ stands for \emph{the} space of \emph{continuous} functionals. If $\cc X$ is a normed space, then we equip $\cc X^\flat$ with the usual dual norm.

We adopt this unusual notation, to avoid giving double duty to ``$*$:'' when $x$ is an operator on a Hilbert space (or a matrix), $x^*$ shall stand for the adjoint of $x$.

$\bullet$ Following the conventions of mathematical physics, we use the ``bra'' and ''ket'' for Hilbert space inner products: the map $H \times H \to \C : (\xi, \eta) \mapsto \langle \xi | \eta \rangle$ is linear in the first variable, and antilinear in the second one.
For Hilbert spaces $H$ and $K$, $T \in B(H,K)$, and $\eta \in H$, we write $T | \eta \rangle$ for $T\eta$.
Further, we shall sometimes denote the canonical basis for $\ell_2^n$ by $(|i\rangle)_{i=1}^n$.

$\bullet$ For $1 \leq p \leq \infty$, $S_p^n$ will denote the space of $n \times n$ matrices, with the Schatten $p$-norm (operator norm for $p=\infty$), and order determined by positive definiteness of a matrix. More generally, for $m \times n$ matrices we use $S_p^{m,n}$.
The Schatten space $S_p$ is viewed as the completion of $\cup_n S_p^n$.

$\bullet$ For a normed space $(\cc X, \| \cdot \|)$, $\ball(\cc X)$ stands for the closed unit ball $\{x \in \cc X : \|x\| \leq 1\}$, and $\sphere(\cc X)$ -- for the unit sphere $\{x \in \cc X : \|x\|=1\}$.
 
Further definitions will be introduced in the text as necessary.


\section{Ordered vector spaces and local reflexivity}\label{s:order Banach}

In this section, we discuss ordered real Banach spaces, and, in particular, the specifics of local reflexivity therein. For unexplained facts and notation, we refer the reader to \cite{Aliprantis-Tourky} and \cite{batty1984positive}.

\begin{defn}\label{def:wedge and cone}
Suppose $\cc X$ is a \emph{real} topological vector space. Following \cite{Aliprantis-Tourky}, we say that a closed set $\cc X^+ \subset \cc X$ is a \emph{wedge} in $\cc X$ if (i) $\cc X^+ + \cc X^+ \subset \cc X^+$, (ii) $\R^+ \cc X^+ = \cc X^+$.
$\cc X^+$ is called a (\emph{positive}) \emph{cone} if, in addition, it is \emph{pointed} -- that is, $\cc X^+ \cap (-\cc X^+) = \{0\}$.
\end{defn}

Such a wedge turns $\cc X$ into an \emph{ordered space}: we say that $x \geq y$ if $x-y \in \cc X^+$. If $\cc X^+$ is a cone, then $x=y$ iff $x \geq y$ and $y \geq x$.

For $A \subset \cc X$, we use a shorthand $A^+$ for $A \cap \cc X^+$.

The dual $\cc X^\flat$ of an ordered space $\cc X$ is turned into an ordered space by defining $\cc X^{\flat+}$ to be the set of all $x^\flat \in \cc X^\flat$ for which $\langle x^\flat,x \rangle \geq 0$ for any $x \geq 0$. Note that $\cc X^{\flat+}$ is $\sigma(\cc X^\flat, \cc X)$-closed (hence norm closed if $\cc X$ is a normed space).
Clearly $\cc X^{\flat+}$ is a wedge; by \cite[Theorem 2.13]{Aliprantis-Tourky}, $\cc X^{\flat+}$ is a cone iff $\cc X^+ - \cc X^+$ is $\sigma(\cc X, \cc X^\flat)$-dense in $\cc X$.

By \cite[Theorem 2.13]{Aliprantis-Tourky}, $x \in \cc X^+$ iff $\langle x^\flat,x \rangle \geq 0$ for any $x^\flat \in \cc X^{\flat+}$. Consequently, an element $x \in \cc X$ belongs to $\cc X^+$ if and only if it belongs to $\cc X^{\flat\flat+}$ (here and throughout the paper, $\cc X$ is viewed as canonically embedded into $\cc X^{\flat\flat}$).

Banach-Alaoglu Theorem states that a Banach space $\cc X$ is weak$^*$-dense in $\cc X^{\flat\flat}$. Likewise, \cite[Theorem 2.13]{Aliprantis-Tourky} gives us:

\begin{prop}\label{p:Banach Alaoglu}
 If $\cc X$ is an ordered Banach space, then $\cc X^+$ is weak$^*$ dense in $\cc X^{\flat\flat+}$. 
\end{prop}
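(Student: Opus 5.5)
We argue by contradiction using the bipolar theorem (Hahn--Banach separation) in the dual pair $(\cc X^{\flat\flat}, \cc X^\flat)$, equipped with the weak$^*$ topology $\sigma(\cc X^{\flat\flat},\cc X^\flat)$. Throughout we regard $\cc X \subset \cc X^{\flat\flat}$ canonically. The first, easy, observation is that $\cc X^+ \subset \cc X^{\flat\flat+}$: for $x \in \cc X^+$ and $x^\flat \in \cc X^{\flat+}$ we have $\langle x^\flat, x\rangle \geq 0$ by the very definition of $\cc X^{\flat+}$.

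Let $C$ be the weak$^*$ closure of $\cc X^+$ in $\cc X^{\flat\flat}$. Since $\cc X^+$ is a convex cone closed under positive scalars, so is $C$. Moreover $\cc X^{\flat\flat+}$ is weak$^*$ closed, being an intersection of the weak$^*$ closed half-spaces $\{x^{\flat\flat} : \langle x^{\flat\flat}, x^\flat\rangle \geq 0\}$ over $x^\flat \in \cc X^{\flat+}$. Hence $C \subset \cc X^{\flat\flat+}$, and the claim reduces to the reverse inclusion.

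Suppose $x^{\flat\flat} \in \cc X^{\flat\flat+}\setminus C$. The weak$^*$ continuous functionals on $\cc X^{\flat\flat}$ are exactly the elements of $\cc X^\flat$, so Hahn--Banach separation of the point $x^{\flat\flat}$ from the closed convex set $C$ yields $x^\flat \in \cc X^\flat$ and $t \in \R$ with
\[
\langle x^{\flat\flat}, x^\flat\rangle < t \leq \langle c, x^\flat\rangle \quad \text{for all } c \in C.
\]
Because $C$ is invariant under multiplication by $\lambda>0$ and contains $0$, we get $t \leq 0$ and $\langle c, x^\flat\rangle \geq 0$ for all $c\in C$. In particular $\langle x^\flat, x\rangle \geq 0$ for every $x \in \cc X^+$, which says $x^\flat \in \cc X^{\flat+}$. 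But then $x^{\flat\flat}\in\cc X^{\flat\flat+}$ forces $\langle x^{\flat\flat}, x^\flat\rangle \geq 0 \geq t$, contradicting the strict inequality above.

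Equivalently, this is the bipolar theorem $\cc X^{+\circ\circ} = \overline{\cc X^+}^{w^*}$ applied in the pair $(\cc X^{\flat\flat},\cc X^\flat)$, which is the content of \cite[Theorem 2.13]{Aliprantis-Tourky}. The only point needing care is that the separating functional comes from $\cc X^\flat$ and not from $\cc X^{\flat\flat\flat}$. This holds precisely because we work with the weak$^*$ topology, whose dual is $\cc X^\flat$; it is the step I would check most carefully. Note also that the argument does not require $\cc X^{\flat\flat+}$ to be pointed, so only wedge properties are used.
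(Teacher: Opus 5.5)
Your proof is correct and follows the paper's approach: the paper simply invokes \cite[Theorem 2.13]{Aliprantis-Tourky}, which is the bipolar theorem for cones in the dual pair $(\cc X^{\flat\flat}, \cc X^\flat)$, and your weak$^*$ Hahn--Banach separation is a self-contained proof of exactly that step. The point you flagged is fine, since the weak$^*$-continuous functionals on $\cc X^{\flat\flat}$ are precisely the elements of $\cc X^\flat$, so the separating functional does come from $\cc X^\flat$.
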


Later (\Cref{c:goldstine from behrends}) we shall see that an analogue of Goldstine Theorem holds as well.

\begin{defn}\label{def:normal generating}
An ordered space $\cc X$ is said to be:
\begin{itemize}
\item $C$-\emph{normal} if, for any $x \in \cc X$ and $y \in \cc X^+$, $-y \leq x \leq y$ implies $C\|y\| \geq \|x\|$.
\item $C$-\emph{generating} if, for any $x \in \cc X$ and $\vr > 0$, there exists $y \in \cc X^+$ so that $-y \leq x \leq y$, and $\|y\| \leq C\|x\| + \vr$.
\end{itemize}
We say that a space is \emph{normal} (\emph{generating}) if it is $C$-normal (resp.~$C$-generating).
\end{defn}

In \cite[Section 1.3]{batty1984positive}, the terms ``$C$-absolutely monotone'' and ``approximately $C$-absolutely dominating'' were used instead of ``$C$-normal'' and ``$C$-generating,'' respectively.

Note that the normality of $\cc X$ implies that $\cc X^+$ is a cone. Indeed, if $x \in \cc X^+ \cap (- \cc X^+)$, then $x \leq 0$; if $\cc X$ is $C$-normal, then $\|x\| \leq C\|0\| = 0$, so $x = 0$.
Likewise, the characterization of $\cc X^{\flat+}$ being a cone given above shows that, if $\cc X$ is generating, then $\cc X^{\flat+}$ is a cone.

A straightforward compactness argument shows that a finite dimensional ordered space $E$ is normal iff $E^+$ is a cone, and is generating iff $E^+$ has non-empty interior.

Re-working examples from \Cref{s: basic properties} below, one see that $(S_p^m)_h$ (the Hermitian, or self-adjoint, part of the $m \times m$ Schatten space $S_p^m$) is $1$-normal and $1$-generating. Any Banach lattice is $1$-normal and $1$-generating as well. By \cite{vanGaans2004embeddings}, for an ordered Banach space $\cc X$, the following are equivalent: (1) $\cc X$ is $1$-normal, and (2) there exists a Banach lattice $\cc Y$ and a bipositive isometry $j : \cc X \to \cc Y$ (a map $T$ is called \emph{bipositive} if $Tx \geq 0$ iff $x \geq 0$).

For future use, we examine ``order-consistent'' local reflexivity (based on \cite{Behrends} and \cite{CG}). Throughout, for a normed space $\cc Z$, $C \subset \cc Z$, and $\vr > 0$, we write $C_\vr = \big\{ z \in \cc Z : {\mathrm{dist}}(z,C) \leq \vr \big\}$. To set up the problem, suppose:
\begin{enumerate}
 \item $E$, $\cc X$, $\cc Y_i$ ($1 \leq i \leq m$), and $\cc Z_j$ ($1 \leq j \leq n$) are Banach spaces, with $E$ finite dimensional.
 \item $A_i : B(E,\cc X) \to \cc Y_i$ are bounded operators; $y_i \in \cc Y_i$.
 \item $B_j : B(E,\cc X) \to \cc Z_j$ are bounded operators; $C_j \subset \cc Z_j$ are closed convex sets.
\end{enumerate}

\begin{thm}\label{t:useless-beh}
 Keep the above notation, and pick $T \in B(E, \cc X^{\flat\flat})$. Then the following are equivalent:
 \begin{enumerate}
 \item For and any $\vr > 0$, there exists $S \subset B(E,\cc X)$ so that $(1-\vr) \|T\| \leq \|S\| \leq (1+\vr) \|T\|$, 
 $A_i S = y_i$, and $B_j S \in (C_j)_\vr$ for any $i$ and $j$.
  \item $T$ is weak$^*$ continuous on the weak$^*$ closure of $\sum_i \ran A_i^\flat$, $A_i^{\flat\flat} T = y_i$ for any $i$, and $B_j^{\flat\flat} T \in \overline{C_j}^{{\textrm{weak}^*}}$ for any $j$.
 \end{enumerate}
\end{thm}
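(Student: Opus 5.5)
The argument is a Behrends-type principle of local reflexivity with convex constraints, so the plan is to reduce to Hahn--Banach separation in the finite-dimensional setting. Identify $B(E,\cc X)$ with $E^\flat \otimes \cc X$, since $E$ is finite dimensional. Then $B(E,\cc X)^{\flat\flat} = B(E,\cc X^{\flat\flat})$ canonically, and $B(E,\cc X)$ sits weak$^*$ densely inside it. Under this identification, $A_i^{\flat\flat}$ and $B_j^{\flat\flat}$ are the weak$^*$-continuous extensions of $A_i, B_j$. We read condition (1) as asking for $S \in B(E,\cc X)$ (not a subset), and we interpret ``$T$ weak$^*$ continuous on the weak$^*$ closure of $\sum_i \ran A_i^\flat$'' as: the functional $\phi \mapsto \langle T,\phi\rangle$ on $B(E,\cc X)^\flat$, restricted to $W := \overline{\sum_i \ran A_i^\flat}^{w^*}$, is weak$^*$ continuous there.

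\textbf{(1)$\Rightarrow$(2).} Take $S_\vr$ as in (1) with $\vr \to 0$; the net $(S_\vr)$ is bounded. Extract a weak$^*$ cluster point $T'$ in $B(E,\cc X^{\flat\flat})$. By weak$^*$ continuity, $A_i^{\flat\flat}T' = y_i$, and since $\mathrm{dist}(B_jS_\vr, C_j)\to 0$, we get $B_j^{\flat\flat}T' \in \overline{C_j}^{w^*}$. On $W$, the functional $T'$ agrees with $\phi \mapsto$ (the value prescribed by the $y_i$); in particular, on $\ran A_i^\flat$ it equals $y_i^{\flat}\mapsto \langle y_i^\flat, y_i\rangle$, which is weak$^*$ continuous. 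The direction as stated concerns the given $T$, however, so this step needs clarification: the intended reading must be that (1) holds ``for $T$'' in the sense that $S_\vr \to T$ weak$^*$ (this is the Behrends formulation). I will adopt that reading, so $T' = T$ and (2) follows.

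\textbf{(2)$\Rightarrow$(1).} This is the main step. Fix $\vr>0$. Suppose, towards a contradiction, that no $S$ exists. The set
\[
K=\{S : \|S\|\le(1+\vr)\|T\|,\ A_iS=y_i\}
\]
is convex, as is the set of $S$ with $B_jS \in (C_j)_\vr$. Use Hahn--Banach together with the weak$^*$ density of $B(E,\cc X)$ (Goldstine) to obtain, for each finite collection of weak$^*$ neighbourhoods, an $S$ close to $T$ in norm at most $(1+\vr)\|T\|$. The exact constraints $A_iS=y_i$ are then handled as in Behrends: the weak$^*$ continuity of $T$ on $W$ lets us write the affine subspace $\{A_iS=y_i\}$ as a translate of $\bigcap_i\ker A_i$. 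Its annihilator is $W$, so the distance from $T$ to $\{S^{\flat\flat}: A_i^{\flat\flat}S=y_i\}$ is controlled by the duality $(\ker)^{\flat\flat} = \ker^{\flat\flat}$, which is valid precisely because $T|_W$ is weak$^*$ continuous. Concretely, we first correct a Goldstine approximant $S_0$ by an element of small norm to satisfy $A_iS=y_i$ exactly. Smallness holds because $A_iS_0 - y_i$ is weakly small, and the weak$^*$ continuity on $W$ upgrades this to norm-smallness after passing to convex combinations (Mazur). The sets $C_j$ are then handled similarly: $B_j^{\flat\flat}T\in\overline{C_j}^{w^*}$, and a separation argument shows that convex combinations of approximants land within $\vr$ of $C_j$. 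Finally, the lower bound $(1-\vr)\|T\|\le\|S\|$ comes from weak$^*$ lower semicontinuity of the norm: choose a norming functional for $T$ among the finitely many weak$^*$ neighbourhoods.

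The main obstacle is performing the simultaneous convex-combination/Mazur step for all constraints at once while keeping the norm bound. I would handle this by applying Hahn--Banach in the product space
\[
B(E,\cc X)\oplus\textstyle\bigoplus_i\cc Y_i\oplus\bigoplus_j\cc Z_j,
\]
separating $(T, (y_i), (C_j))$ from the image of the $(1+\vr)\|T\|$-ball. A separating functional would contradict the hypotheses in (2) after being pulled back to $B(E,\cc X)^\flat$.
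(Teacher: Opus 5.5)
You are right that $(1)\Rightarrow(2)$ is false as literally stated, because (1) never ties $S$ to $T$. For example, take $E=\cc X=\cc Y_1=\C$, $A_1=\mathrm{id}$, $y_1=1$, $T=-1$: then (1) holds and (2) fails. So some weak$^*$ approximation of $T$ must be built into (1). The paper only uses $(2)\Rightarrow(1)$, in the proof of Theorem~\ref{t:main-beh}. Under your reading, the reason $T$ is weak$^*$ continuous on $W$ is this: all the $S_\vr$ take the same value $\langle y^\flat,y_i\rangle$ at $A_i^\flat y^\flat$. Being weak$^*$ continuous on $B(E,\cc X)^\flat$, they therefore coincide on $W$, and their weak$^*$ limit $T$ coincides with them there.

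The genuine gap is in $(2)\Rightarrow(1)$, at the exact constraints $A_iS=y_i$. This is the only place where the weak$^*$-continuity hypothesis has content. Your concrete mechanism (Goldstine approximants, weak smallness of $A_iS_\alpha-y_i$, Mazur, then a small correction) never actually uses that hypothesis:
\begin{itemize}
\item weak nullness of the residuals follows from $A_i^{\flat\flat}T=y_i$ alone;
\item turning a norm-small residual into a norm-small correction $R$ with $A_iR=y_i-A_i\tilde S$ requires an open-mapping estimate, i.e.\ closed range of $(A_i)_i$, which is not assumed.
\end{itemize}
A counterexample to the mechanism: let $E=\C$, $\cc X=\cc Y_1=c_0$, $A_1=\mathrm{diag}(1/n)$, $T=(1,1,\dots)\in\ell_\infty=c_0^{\flat\flat}$ and $y_1=A_1^{\flat\flat}T=(1/n)$. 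The truncations $S_k$ of $T$ satisfy $\|S_k\|=1$, $S_k\to T$ weak$^*$ and $\|A_1S_k-y_1\|=1/(k+1)$, yet $y_1\notin\ran A_1$. Here (2) fails only through the weak$^*$-continuity clause, since $W=\ell_1$.

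Your closing product-space separation has the same defect: it can only produce $\|A_iS-y_i\|<\vr$, not equality. The quantity that must be made small is $\mathrm{dist}(\tilde S,\,S_*+N)$, where $N=\bigcap_i\ker A_i$ and $A_iS_*=y_i$. Since $N^\perp=W$, this equals $\sup\{|\langle w,\tilde S-S_*\rangle| : w\in W,\ \|w\|\le1\}$. The residuals only test $\tilde S-S_*$ against the smaller set $\bigcup_i A_i^\flat(\ball(\cc Y_i^\flat))$.

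The repair goes as follows.
\begin{itemize}
\item Weak$^*$ continuity of $T$ on $W$, together with Hahn--Banach in $(B(E,\cc X)^\flat,w^*)$, gives $S_*\in B(E,\cc X)$ with $T|_W=S_*|_W$, hence $A_iS_*=y_i$.
\item Since $(B(E,\cc X)/N)^\flat=W$, a Goldstine net $S_\alpha\to T$ with $\|S_\alpha\|\le\|T\|$ satisfies $S_\alpha+N\to S_*+N$ weakly in the quotient.
\item Mazur applied there to a tail yields $\tilde S$ in the convex hull of the tail, so $\tilde S$ is still weak$^*$ close to $T$ and $\|\tilde S\|\le\|T\|$. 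It also yields $S'\in S_*+N$ with $\|S'-\tilde S\|<\delta$.
\end{itemize}
Thus $T$ lies in the weak$^*$ closure of the convex set $\{S : A_iS=y_i,\ \|S\|\le\|T\|+\delta\}$. Your separation/Mazur step for the $C_j$, together with the norming-pair lower bound, can then be carried out inside this set. This is essentially the content of the Behrends theorem to which the paper defers.
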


Here, the statement ``$T$ is weak$^*$ continuous on $\cdots$'' is understood in the sense that $\ran A_i^\flat$ lies in $B(E, \cc X)^\flat = E \widehat{\otimes} \cc X^\flat$, and $T$ is naturally identified with an element of $B(E, \cc X)^{\flat\flat}$.

\begin{proof}[Sketch of a proof]
 Theorem 2.3 of \cite{Behrends} (itself deduced from \cite[Theorem 2.1]{Behrends}) provides the proof when $T$ is an isometric embedding of $E$ into $X^{\flat\flat}$,
 but the same proof works for an arbitrary $T$, with minor modifications.
\end{proof}

We shall need the following particular case:

\begin{thm}\label{t:main-beh}
 Keep the above notation, and pick $T \in B(E, \cc X^{\flat\flat})$. Then the following are equivalent:
 \begin{enumerate}
 \item For every finite dimensional $F \subset \cc X^\flat$, and any $\vr > 0$, there exists $S \subset B(E,\cc X)$ so that $(1-\vr) \|T\| \leq \|S\| \leq (1+\vr) \|T\|$, $\langle Te, f \rangle = \langle f, Se \rangle$ for any $e \in E$ and $f \in F$, $Te = Se$ for $e \in T^{-1}(X)$, $B_j S \in (C_j)_\vr$ for any $j$.
  \item $B_j^{\flat\flat} T \in \overline{C_j}^{{\textrm{weak}^*}}$ for any $j$.
 \end{enumerate}
\end{thm}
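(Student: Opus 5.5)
We will deduce this from \Cref{t:useless-beh} by encoding the interpolation conditions of (1) through suitable operators $A_i$. Fix a finite dimensional $F \subset \cc X^\flat$. Put $E_0 = T^{-1}(\cc X) \subset E$ (a subspace), with basis $e_1, \dots, e_k$ extended to a basis $e_1, \dots, e_N$ of $E$. Let $f_1, \dots, f_r$ be a basis of $F$. We take two families of operators.

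First, for $1 \leq l \leq k$, let $A_l : B(E,\cc X) \to \cc X$, $A_l S = S e_l$, with $y_l = T e_l \in \cc X$.

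Second, let $A' : B(E,\cc X) \to \C^{N \times r}$ be given by $A'S = (\langle f_s, S e_t \rangle)_{t,s}$, with $y' = (\langle T e_t, f_s\rangle)_{t,s}$.

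Then $S$ satisfies $A_l S = y_l$ and $A'S = y'$ if and only if $Se = Te$ on $E_0$ and $\langle Te,f\rangle = \langle f, Se\rangle$ for all $e \in E$, $f \in F$. Hence (1) of the present theorem is exactly (1) of \Cref{t:useless-beh} for these data, applied for each $F$.

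It therefore suffices to check that (2) of the present theorem implies the extra conditions in (2) of \Cref{t:useless-beh}; the converse is trivial, since the $B_j$ condition appears verbatim. The identities $A_l^{\flat\flat}T = y_l$ and $A'^{\flat\flat}T = y'$ hold by construction. For this we identify $T$ with the functional on $E\widehat\otimes \cc X^\flat$ given by $e\otimes x^\flat \mapsto \langle Te, x^\flat\rangle$, and compute the adjoints: $\ran A_l^\flat = \{e_l \otimes x^\flat : x^\flat \in \cc X^\flat\}$ and $\ran A'^\flat = \Span\{e_t\otimes f_s\}$.

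The main point is weak$^*$ continuity of $T$ on the weak$^*$ closure $W$ of $\sum \ran A_i^\flat$. Since $E$ is finite dimensional, $E\widehat\otimes \cc X^\flat \cong (\cc X^\flat)^N$ (with an equivalent norm), and the weak$^*$ topology coming from $B(E,\cc X)$ corresponds to the product of $\sigma(\cc X^\flat,\cc X)$ topologies. The sum of the ranges is
\[
\Bigl(\bigoplus_{l\le k} e_l\otimes \cc X^\flat\Bigr) \oplus \Span\{e_t \otimes f_s : t > k\} .
\]
This is already weak$^*$ closed: the first summand is a product of full coordinate spaces, the second is finite dimensional, and they sit in complementary coordinates.

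On this sum, $T$ acts as
\[
\textstyle\sum_l e_l\otimes x_l^\flat + \sum_{t>k} e_t\otimes g_t \;\mapsto\; \sum_l \langle x_l^\flat, Te_l\rangle + \sum_{t>k}\langle Te_t, g_t\rangle .
\]
The first part is given by evaluation at the points $Te_l \in \cc X$, hence is weak$^*$ continuous. The second part is a linear functional on a finite dimensional space, hence is continuous.

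The obstacle I expect is bookkeeping rather than substance: justifying that the weak$^*$ topology on $E\widehat\otimes\cc X^\flat = B(E,\cc X)^\flat$ restricted to these subspaces really is the coordinatewise one, and that adding a finite dimensional piece preserves closedness. The ``$(1\pm\vr)\|T\|$'' clause and the $(C_j)_\vr$ clause then transfer directly from \Cref{t:useless-beh}.
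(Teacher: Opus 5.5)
Your proposal is correct and follows the paper's route. Both reduce to \Cref{t:useless-beh} by encoding agreement on $E_0 = T^{-1}(\cc X)$ and the pairings with $F$ as operators $A_i$, and then verify that the sum of their ranges, $E_0 \otimes \cc X^\flat + E_1 \otimes F$, is weak$^*$ closed and that $T$ is weak$^*$ continuous on it. The paper gets closedness from the weak$^*$-continuous projection $P \otimes I + (I-P) \otimes Q$ (with $Q$ onto $F$), and gets continuity by invoking the standard local reflexivity principle to find $R \in B(E,\cc X)$ agreeing with $T$ on $W$; your coordinatewise identification $E \widehat\otimes \cc X^\flat \cong (\cc X^\flat)^N$ does both steps directly and more elementarily.
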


\begin{proof}
The proof depends on the appropriate selection of the operators $A_i$ in \Cref{t:useless-beh}. 

Let $E_0 = T^{-1}(\cc X)$, and consider the restriction operator $A_0 : B(E, \cc X) \to B(E_0, \cc X)$. The condition $A_0^{\flat\flat} T = A_0 S$ translates to $T|_{E_0} = S|_{E_0}$.
 Likewise, we guarantee that $\langle Te, f \rangle = \langle f, Se \rangle$ for any $e \in E$ and $f \in F$ by taking $A_{k\ell} : B(E, \cc X) \to \R : U \mapsto \langle Ue_k, f_\ell \rangle$, where $(e_k)$ and $(f_\ell)$ are sufficiently large finite subsets of $E$ and $F$ respectively.
 It remains to show that $T$ is weak$^*$ continuous on the weak$^*$ closure of $W = \ran A_0^\flat + \sum_{k,\ell} \ran A_{k\ell}^\flat$. 
 
 Show first that $W$ itself is weak$^*$ closed. Find a projection $P$ from $E$ onto $E_0$, and let $E_1 = \ker P$; then $W = E_0 \otimes \cc X^\flat + E \otimes F = E_0 \otimes \cc X^\flat + E_1 \otimes F$. 
 By \cite[Theorem 2.5]{Oja-locref}, there exists a weak$^*$ continuous projection $Q$ from $\cc X^\flat$ to $F$. Then $P \otimes I_{X^\flat} + (I-P) \otimes Q$ is a weak$^*$ continuous projection from $E \otimes X^\flat$ onto $W$, which implies $W$ being weak$^*$ closed.

 Now use the ``standard'' principle of local reflexivity to find $R \in B(E,\cc X)$ so that $T|_{E_0} = R|_{E_0}$, and $\langle Te, f \rangle = \langle f, Re \rangle$ for any $e \in E, f \in F$. In other words, $T$ coincides with $R$ on $W$, hence the desired weak$^*$ continuity.
\end{proof}

The following result is essentially contained in \cite{Behrends} (pp.~114-115).

\begin{cor}\label{c:almost cone}
Suppose $E$ and $\cc X$ are ordered Banach spaces, with $E$ finite dimensional. If $T \in B(E,\cc X^{\flat\flat})$ satisfies $T(E^+) \subset \cc X^{\flat\flat+}$, then for every $\vr > 0$ and any finite dimensional $F \subset \cc X^\flat$ there exists $S \in B(E, \cc X)$ so that $(1-\vr) \|T\| \leq \|S\| \leq (1+\vr) \|T\|$, $\langle Te, f \rangle = \langle f, Se \rangle$ for any $e \in E$ and $f \in F$, $Te = Se$ for $e \in T^{-1}(X)$, and $S (\ball(E)^+) \subset (\cc X^+)_\vr$.
\end{cor}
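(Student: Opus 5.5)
We derive the corollary from \Cref{t:main-beh} with a single operator $B_1$ and closed convex set $C_1$, chosen to encode the positivity requirement. Since $E$ is finite dimensional, the set $K = \ball(E)^+ = \ball(E) \cap E^+$ is compact. Pick a finite $\delta$-net $\{e_1, \ldots, e_N\} \subset K$, where $\delta>0$ will be fixed later in terms of $\vr$ and $\|T\|$. Define
\[
B_1 : B(E,\cc X) \to \cc Z_1 = \cc X^N \ (\text{with the } \ell_\infty^N \text{ norm}), \qquad B_1 U = (Ue_1, \ldots, Ue_N),
\]
and let $C_1 = (\cc X^+)^N$. This set is closed and convex, because $\cc X^+$ is a closed wedge.

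The next step is to verify condition (2) of \Cref{t:main-beh}. We have $\cc Z_1^{\flat\flat} = (\cc X^{\flat\flat})^N$ with the $\ell_\infty$ norm, and $B_1^{\flat\flat} T = (Te_1, \ldots, Te_N)$. Each $Te_k$ lies in $\cc X^{\flat\flat+}$ by hypothesis. By \Cref{p:Banach Alaoglu}, $\cc X^+$ is weak$^*$ dense in $\cc X^{\flat\flat+}$. Taking products (the weak$^*$ topology on $\cc Z_1^{\flat\flat}$ is the product topology, since $\cc Z_1^\flat = (\cc X^\flat)^N$ with the $\ell_1$ norm), we get that $(\cc X^{\flat\flat+})^N$ lies in the weak$^*$ closure of $C_1$. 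Hence $B_1^{\flat\flat}T \in \overline{C_1}^{\textrm{weak}^*}$.

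Now \Cref{t:main-beh} applied with tolerance $\vr' \le \vr$ gives $S$ satisfying the norm bounds, the duality conditions on $F$, and $S=T$ on $T^{-1}(\cc X)$, together with $\mathrm{dist}(Se_k, \cc X^+) \le \vr'$ for each $k$. To pass from the net to all of $K$, take $e \in K$ and choose $e_k$ with $\|e - e_k\| \le \delta$. Then
\[
\mathrm{dist}(Se, \cc X^+) \le \vr' + \|S\|\delta \le \vr' + (1+\vr')\|T\|\delta.
\]
Choosing $\vr' = \vr/2$ and $\delta$ with $(1+\vr)\|T\|\delta \le \vr/2$ gives $S(K) \subset (\cc X^+)_\vr$; if $T=0$, any $\delta$ works.

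The main obstacle is the weak$^*$ closure step: we must identify $B_1^{\flat\flat}$ and the weak$^*$ topology on $(\cc X^{\flat\flat})^N$ correctly, and make sure that density of a cone in each coordinate yields density of the product cone. Both follow because the weak$^*$ topology on a finite $\ell_\infty$-sum of biduals is the product of the coordinate weak$^*$ topologies. Everything else is a routine compactness (net) argument.
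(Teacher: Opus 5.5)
Your proposal is correct and is essentially the paper's proof: you impose the positivity constraint on a finite net of $\ball(E)^+$ via \Cref{t:main-beh}, check the weak$^*$ closure condition with \Cref{p:Banach Alaoglu}, and then extend to all of $\ball(E)^+$ by the net estimate. The only difference is cosmetic: the paper uses $N$ separate operators $B_j U = Ue_j$ into $\cc Z_j = \cc X$ with $C_j = \cc X^+$, whereas you bundle them into one operator into $\ell_\infty^N(\cc X)$, which costs you only the (correct) remark that the weak$^*$ closure of a product is the product of the weak$^*$ closures.
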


\begin{proof}
We can and do assume that $\vr < 1/2$, and (by scaling) that $\|T\| = 1$. Let $(e_j)_{j=1}^N$ be an $\vr/3$-net in $\ball(E)^+$. Apply \Cref{t:main-beh} with the following approximate conditions:
\begin{itemize}
\item
$\cc Z_j = \cc X$ for $1 \leq j \leq N$; $C_j = \cc X^+$ for any $j$ (by \Cref{p:Banach Alaoglu}, $\overline{C_j}^{{\textrm{weak}^*}} = \cc X^{\flat\flat+}$);
\item
For $1 \leq j \leq N$, $B_j : B(E,\cc X) \to \cc X : U \mapsto Ue_j$ (then $B_j^{\flat\flat} T \in \cc \cc X^{\flat\flat+}$).
\end{itemize}
We can find $S \in B(E,\cc X)$ so that $(1-\vr) \|T\| \leq \|S\| \leq (1+\vr) \|T\|$, $\langle Te, f \rangle = \langle f, Se \rangle$ for any $e \in E$ and $f \in F$, and $T e_j \subset (\cc X^+)_{\vr/3}$ for any $k$.
Now pick $e \in \ball(E)^+$. Find $j$ so that $\|e - e_j\| \leq \vr/3$. Further, find $x \in \cc X^+$ so that $\|S e_j - x\| < \vr/2$. Then
$$
\|Se - x\| \leq \|S\| \|e - e_j\| + \|e_j - x\| < \Big( 1 + \frac\vr3 \Big) \frac\vr3 + \frac\vr2 < \vr .
\qedhere
$$
\end{proof}

\Cref{c:almost cone} guarantees the existence of $S$ taking the positive cone of $E$ ``almost'' to the positive cone of $X$. To map $E^+$ into $\cc X^+$, we need certain extra assumptions.

\begin{prop}\label{p:into cone}
Suppose $E$ and $\cc X$ are ordered Banach spaces, with $E$ finite dimensional and normal, and $\cc X$ -- generating.
If $T \in B(E,\cc X^{\flat\flat})$ satisfies $T(E^+) \subset \cc X^{\flat\flat+}$, then for every $\vr > 0$ and any finite dimensional $F \subset \cc X^\flat$ there exists $U \in B(E,\cc X)$ so that  $(1-\vr) \|T\| \leq \|U\| \leq (1+\vr) \|T\|$, $\big|\langle Te, f \rangle - \langle f, Ue \rangle\big| \leq \vr \|e\|  \|f\|$ for any $e \in E$ and $f \in F$, $\|Te - Ue\| \leq \vr \|e\|$ for $e \in T^{-1}(X)$, and $U(E^+) \subset \cc X^+$.
\end{prop}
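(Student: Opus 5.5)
Start from \Cref{c:almost cone} and then correct the resulting $S$ by a small perturbation that lands the cone $E^+$ inside $\cc X^+$. Normality of the finite dimensional $E$ makes $E^+$ a closed pointed cone in a finite dimensional space. Such a cone has a compact base, and hence is finitely approximable. Concretely, I will fix a strictly positive functional $\phi \in E^{\flat+}$ with $\phi(e) \geq c\|e\|$ for $e \in E^+$. Such $\phi$ exists because $E^+$ is pointed and closed in finite dimensions, so $0 \notin$ the convex compact set $\{e \in E^+ : \|e\|=1\}$'s convex hull, and we separate. Then write $\phi = \sum_k \langle \cdot, g_k\rangle$-type expression via a basis: choose a basis $(b_k)_{k=1}^d$ of $E$ with dual basis $(b_k^\flat)$. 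The correction will have the form
$$
U e = S e + \phi(e)\, z ,
$$
where $z \in \cc X^+$ is a suitably "large" positive element absorbing the $\vr$-errors.

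The key steps are as follows. First, apply \Cref{c:almost cone} with a small $\delta$ in place of $\vr$, obtaining $S$ with $S(\ball(E)^+) \subset (\cc X^+)_\delta$. The trouble is that a $\delta$-perturbation of a positive element need not be dominated by a fixed positive vector unless $\cc X$ is generating. Here generation enters. For $e \in \ball(E)^+$ write $Se = x_e + r_e$ with $x_e \geq 0$ and $\|r_e\| \leq \delta$. By $C$-generation there is $y_e \geq 0$ with $-y_e \leq r_e \leq y_e$ and $\|y_e\| \leq C\delta + \delta$. However, $y_e$ depends on $e$, and we need a single $z$.

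To get uniformity, use the finite-dimensional structure. The cone $E^+$ is generated by its compact base $K = \{e \in E^+ : \phi(e) = 1\}$. Since $U$ is linear and $U(e) = Se + z$ on $K$, it suffices to have $Se + z \geq 0$ for all $e \in K$. I would pick a finite $\eta$-net $(e_j)$ of $K$ and handle the net points via generation: $Se_j = x_j + r_j$, with $r_j \geq -y_j$, and set $z_0 = \sum_j y_j$. The non-net points still leave an error. To handle them, I would instead approximate $K$ from outside by a polytope $P \supset K$ with finitely many vertices $v_1,\dots,v_M$ lying in $E^+$ (possible only approximately). Alternatively, and more robustly, I would replace $E^+$ by a slightly larger polyhedral cone $Q \supseteq E^+$ generated by finitely many vectors $v_j$, staying inside the $\eta$-neighbourhood in the base. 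Then I would ensure $Uv_j \geq 0$ for each generator. Positivity on the generators gives positivity on $Q \supseteq E^+$ by linearity. The $v_j$ need not be in $E^+$, but they are within $\eta$ of $K$. So $Sv_j$ is within $(1+\delta)\eta + \delta$ of $\cc X^+$, and generation produces $y_j \geq 0$ with $Sv_j + y_j \geq 0$ and $\|y_j\| \lesssim C(\eta+\delta)$. I would take $z = \sum_j y_j$ and $U = S + \phi(\cdot) z$.

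The remaining step is bookkeeping. We have $\|U - S\| \leq \|\phi\|\,\|z\| \lesssim M C(\eta+\delta)$, and $M$ depends only on $\eta$ and $E$. So we first fix $\eta$, hence $M$, and then choose $\delta$ small, with $\eta$ also small relative to $\vr/(MC)$. That ordering is the delicate point: $M$ grows as $\eta \to 0$. The fix is to estimate instead $\|z\| \leq \sum_j \|y_j\|$, with each $\|y_j\| \leq (C+1)\operatorname{dist}(Sv_j, \cc X^+)$. I would choose the polyhedral cone $Q$ first, with fixed $M$ and $\operatorname{dist}(v_j, E^+)$ small. The terms $\operatorname{dist}(v_j,E^+)$ are then fixed small numbers that can be made small before $M$ is fixed, by taking $Q$ close to $E^+$. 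This is the main obstacle: balancing the number of generators against their distance to $E^+$. I expect it to work because $\sum_j \operatorname{dist}(v_j,E^+)$ can be made arbitrarily small by choosing the vertices of a polyhedral cone near $E^+$ appropriately, e.g. by shrinking the outward offsets. With $\|U - S\|$ small, the norm bounds $(1\pm\vr)\|T\|$, the approximate duality with $F$, and $\|Te - Ue\| \leq \vr\|e\|$ on $T^{-1}(\cc X)$ all follow from the exact statements for $S$ in \Cref{c:almost cone}.
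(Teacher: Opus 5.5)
Your skeleton matches the paper's: an almost-positive $S$ from \Cref{c:almost cone}, a strictly positive $\phi$ (the paper's $e^\flat$ from \Cref{l:positive functional}), and $U=S+\phi(\cdot)\,z$ with $z\in\cc X^+$. However, the step you single out as the main obstacle is not overcome, and the claim you rely on to overcome it is false.

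Your estimate is $\|z\|\le (C+1)\sum_{j=1}^M \operatorname{dist}(Sv_j,\cc X^+)$. Since $S$ may be a bipositive isometry (e.g.\ $\cc X=E$, $T=S=\id$), this is small only if $\sum_j\operatorname{dist}(v_j,E^+)$ is small. For $\dim E\ge 4$ that cannot be arranged. Take $E=\R^4$ with the Lorentz cone $\{(t,u)\in\R\times\R^3: t\ge|u|_2\}$ and $\phi(t,u)=t$. This is $(S_\infty^2)_h$ with its PSD cone, exactly the kind of $E$ to which the proposition is applied in \Cref{use predual}. If the $v_j=(1,w_j)$ generate a cone containing $E^+$, then $\operatorname{conv}\{w_j\}$ contains the Euclidean unit ball of $\R^3$. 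Hence the caps $\{u\in S^2:\langle u,w_j\rangle\ge1\}$ cover $S^2$. Writing $|w_j|_2=1+t_j$, the $j$-th cap has area $2\pi t_j/(1+t_j)\le 2\pi t_j$ (and is null if $t_j\le 0$), so $\sum_j\max(t_j,0)\ge 2$. On the other hand, $\operatorname{dist}(v_j,E^+)=\max(t_j,0)/\sqrt2$. So $\sum_j\operatorname{dist}(v_j,E^+)\ge\sqrt2$ for every polyhedral cone $Q\supseteq E^+$; other norms or normalizations only change the constant. Your balancing heuristic is valid only for $\dim E\le 3$.

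Nor can a cleverer choice of the $y_j$ save the scheme, as long as it uses only the pointwise information $\operatorname{dist}(Sv_j,\cc X^+)$. In $\ell_1$, for instance, a common majorant of $M$ disjoint positive vectors of norm $\epsilon$ has norm at least $M\epsilon$.

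The missing idea is to use linearity of $S$ together with generation, via \Cref{l:order bounded}. There is a single $x\in\cc X^+$ with $-\|e\|x\le Se\le\|e\|x$ for all $e\in E$. The Auerbach-basis construction gives the a priori bound $\|x\|\le \dim E\,(C\|S\|+1)$. The order bound must be for $S$, not $T$, and this a priori bound is what lets you fix $\sigma$, hence $N$, before $S$ is chosen. The steps are:
\begin{enumerate}
\item Normalize $\phi$ so that $\phi(e)\ge\|e\|$ on $E^+$.
\item Fix $\sigma$ small in terms of $\vr$, $\|\phi\|$ and the bound on $\|x\|$.
\item Take a $\sigma$-net $(e_k)_{k=1}^N$ of $\ball(E)^+$; no polyhedral approximation is needed.
\item Only then apply \Cref{c:almost cone} with tolerance of order $\delta/(NC)$, so that generation yields $y_k\ge0$ with $\|y_k\|\le\delta/N$ and $Se_k+y_k\ge0$.
\item Set $y=\sum_k y_k$. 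Then $Se\ge Se_k-\sigma x\ge-(y+\sigma x)$ for every $e\in\ball(E)^+$, and $U=S+\phi\otimes(y+\sigma x)$ is positive with $\|U-S\|$ small.
\end{enumerate}
This is the paper's argument. The off-net error is absorbed by the single vector $\sigma x$, whose cost does not grow with $N$. The $N$-dependent cost is removed by choosing the tolerance in \Cref{c:almost cone} after $N$ is known.
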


The proof requires several lemmas, which may be known to experts.

\begin{lem}\label{l:positive functional}
 Suppose $E$ is a normal separable ordered Banach space. Then there exists $e^\flat \in E^{\flat+}$ so that $\langle e^\flat, e \rangle > 0$ for any $e \in E^+ \backslash \{0\}$. Consequently, if $E$ is finite dimensional, then there exists $c>0$ so that $\langle e^\flat, e \rangle \geq c \|e\|$ for any $e \in E$.
\end{lem}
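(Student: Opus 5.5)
The plan is to build $e^\flat$ from a countable family of positive functionals that separate points of $E^+$, and then get the quantitative bound from compactness.

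\emph{Separating family.} By \cite[Theorem 2.13]{Aliprantis-Tourky}, recalled above, $x \in E^+$ iff $\langle x^\flat, x\rangle \ge 0$ for all $x^\flat \in E^{\flat+}$. Let $e \in E^+\setminus\{0\}$. Normality makes $E^+$ a cone, so $-e \notin E^+$. Hence some $x^\flat \in E^{\flat+}$ has $\langle x^\flat, -e\rangle < 0$, that is, $\langle x^\flat, e\rangle > 0$.

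\emph{Countable subfamily.} Separability of $E$ makes the weak$^*$ topology on bounded subsets of $E^\flat$ metrizable and compact. So $K = \ball(E^\flat)^+$ is a weak$^*$ compact metrizable set, hence weak$^*$ separable. Pick a weak$^*$ dense sequence $(x_n^\flat)$ in $K$.

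\emph{Separation by the sequence.} Given $e \in E^+\setminus\{0\}$, the previous step gives some $x^\flat \in K$ (after scaling) with $\langle x^\flat, e\rangle > 0$. Weak$^*$ density then gives some $n$ with $\langle x_n^\flat, e\rangle > 0$.

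\emph{The functional.} Set $e^\flat = \sum_n 2^{-n} x_n^\flat$. The series converges in norm, each term lies in the norm-closed cone $E^{\flat+}$, so $e^\flat \in E^{\flat+}$. For $e \in E^+\setminus\{0\}$ every term $\langle x_n^\flat, e\rangle$ is $\ge 0$ and at least one is $>0$, so $\langle e^\flat, e\rangle > 0$.

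\emph{Finite-dimensional case.} The bound can only be intended for $e \in E^+$: since $e^\flat$ is linear, it cannot dominate $c\|e\|$ for both $e$ and $-e$ unless $E=0$. I will prove it on $E^+$. The set $\sphere(E)^+ = \sphere(E)\cap E^+$ is compact because $E$ is finite dimensional and $E^+$ is closed. The continuous function $e \mapsto \langle e^\flat, e\rangle$ is strictly positive on this set, so it attains a minimum $c > 0$ there (if $E^+=\{0\}$ the claim is trivial). Homogeneity then gives $\langle e^\flat, e\rangle \ge c\|e\|$ for all $e \in E^+$.

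\emph{Main obstacle.} The only delicate point is the countable reduction, which rests on separability. Metrizability of the weak$^*$-compact ball is standard, so I expect no real difficulty.
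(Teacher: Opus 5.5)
Your proof is correct, but it takes a different route from the paper's. The paper uses the scalar version of \Cref{nor to gen}: normality of $E$ makes $E^\flat$ $C$-generating. It takes a countable $1/(2C)$-net $(e_k)$ in $\sphere(E)^+$ and picks norm-one $x_k^\flat$ with $\langle x_k^\flat, e_k\rangle = 1$. It then dominates each of these by $y_k^\flat \in E^{\flat+}$ with $y_k^\flat \geq \pm x_k^\flat$ and $\|y_k^\flat\| \leq C$. This gives $\langle y_k^\flat, e\rangle \geq 1/2$ for every $e \in \sphere(E)^+$ within $1/(2C)$ of $e_k$, and the paper sets $e^\flat = \sum_k 2^{-k} y_k^\flat$.

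You instead combine Hahn--Banach separation of $-e$ from the closed cone $E^+$ (the bipolar characterization recalled in \Cref{s:order Banach}) with weak$^*$ separability of $\ball(E^\flat)^+$.

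\textbf{What your route buys.} It needs fewer hypotheses: normality enters only through pointedness of $E^+$. So your argument in fact shows that every closed pointed cone in a separable Banach space admits a strictly positive functional.

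\textbf{What the paper's route buys.} It uses the quantitative normality, through the generating constant of $E^\flat$, and works with norm nets rather than the weak$^*$ topology. The payoff is that positivity is locally uniform: each $y_k^\flat$ is bounded below by $1/2$ on a norm-neighborhood of $e_k$ in $\sphere(E)^+$. When $E$ is finite dimensional the net can be finite, of size $N$ say. The construction then yields directly $\langle e^\flat, e\rangle \geq 2^{-N-1}\|e\|$ for $e \in E^+$, with $c$ controlled by the normality constant and the dimension. Your compactness minimum gives no such control.

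Your correction is right that the final inequality can only hold for $e \in E^+$, since applying it to $\pm e$ would force $E = \{0\}$. This is exactly how the lemma is used in the proof of \Cref{p:into cone}.
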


\begin{proof}
By the scalar version of \Cref{nor to gen} (see also \cite{batty1984positive}), $E^{\flat+}$ is generating; find $C > 0$ exceeding the value of the generating constant.
 Let $(e_k)_{k=1}^\infty$ be a $1/(2C)$-net in $\sphere(E)^+$ (if $\dim E < \infty$, a finite net will suffice). For each $k$ find norm one $x_k^\flat \in E^\flat$ so that $\langle x_k^\flat , e_k \rangle = 1$. Further, find $y_k^\flat \in E^{\flat+}$ so that $y_k^\flat \geq \pm x_k^\flat$, and $\|y_k^\flat\| \leq C$. Note that $\langle y_k^\flat , e_k \rangle \geq 1$.
 
 We claim that $e^\flat = \sum_k 2^{-k} y_k^\flat$ has the desired properties. Indeed, for any $e \in \sphere(E)^+$ there exists $k$ so that $\|e_k - e\| \leq 1/(2C)$. Then
 $$
 \langle y_k^\flat , e \rangle \geq \langle y_k^\flat , e_k \rangle - \|y_k^\flat\| \|e - e_k\| \geq 1 - C \cdot \frac1{2C} = \frac 12 ,
 $$
 hence $\langle e^\flat, e \rangle \geq 2^{-k} \langle y_k^\flat , e \rangle > 0$. The ``consequently'' statement follows by compactness.
\end{proof}

\begin{rmk}
    By plugging $\lambda = 0$ and $\mu = -1$ into \cite[Lemma 1.1.5]{batty1984positive}, we show that, if $\cc X$ is $C$-normal, then $\cc X^\flat$ is $C$-generating. We prove similar results for matricial normed spaces in \Cref{ss:duality}.
\end{rmk}

Next we consider order boundedness of the image of the unit ball of a finite dimensional space.

\begin{lem}\label{l:order bounded}
 Suppose $E$ is finite dimensional, and $\cc X$ is a generating ordered Banach space. Then for every $V \in B(E,\cc X)$ there exists $x \in \cc X^+$ so that, for every $e \in E$, we have $-\|e\|x \leq Ve \leq \|e\|x$.
\end{lem}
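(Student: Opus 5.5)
The plan is to reduce to finitely many vectors, using a basis of $E$ and the generating property of $\cc X$. Since $E$ is finite dimensional, fix a basis $(e_k)_{k=1}^n$ of $E$ with dual functionals $(e_k^\flat)$. Every $e \in E$ then satisfies $e = \sum_k \langle e_k^\flat, e\rangle e_k$, with $|\langle e_k^\flat, e\rangle| \leq \|e_k^\flat\| \|e\|$. (If one prefers to avoid dual functionals, any norm on the finite dimensional space $E$ is equivalent to the $\ell_1$-norm in this basis, which gives a constant $K$ with $\sum_k |a_k| \leq K \|\sum_k a_k e_k\|$.) The scalars here are real, since we are in the ordered real setting.

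Next, assume $\cc X$ is $C$-generating. For each $k$, apply \Cref{def:normal generating} to $x = Ve_k$ with, say, $\vr = 1$. This produces $y_k \in \cc X^+$ with $-y_k \leq Ve_k \leq y_k$. For a real scalar $a$, multiplying this inequality by $a$ gives $-|a| y_k \leq a Ve_k \leq |a| y_k$; when $a<0$ the two sides simply swap. Summing over $k$, using that $\cc X^+$ is closed under addition, we get
$$
-\sum_k |a_k| y_k \leq V\Big(\sum_k a_k e_k\Big) \leq \sum_k |a_k| y_k .
$$

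Now set $x = K \sum_k y_k$, where $K = \max_k \|e_k^\flat\|$. This $x$ lies in $\cc X^+$ as a positive combination of elements of $\cc X^+$. For $e = \sum_k a_k e_k$ we have $|a_k| \leq K\|e\|$, so $\|e\| x - \sum_k |a_k| y_k = \sum_k (K\|e\| - |a_k|) y_k \in \cc X^+$. Combining this with the display gives $-\|e\| x \leq Ve \leq \|e\| x$, as required.

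There is no real obstacle in this argument. The only points needing care are that the scalar field is real, so that $|a|$ makes sense with the order, and that bounding $|a_k|$ by $\|e\|$ uses finite dimensionality, via boundedness of the coordinate functionals. The norm control in the generating property is not actually needed; we only use the existence of the dominating elements $y_k$.
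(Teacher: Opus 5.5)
Your argument is correct and is essentially the paper's proof. The only difference is that the paper takes an Auerbach basis, so the coordinate functionals have norm one and $x=\sum_i x_i$ works directly, whereas your arbitrary basis costs the harmless factor $K=\max_k\|e_k^\flat\|$ (and, as you note, neither version uses the norm control in the generating property).
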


\begin{proof}
 Let $(e_i)_{i=1}^n$ be an Auerbach basis in $E$. For each $i$, find $x_i \in \cc X^+$ so that $-x_i \leq Ve_i \leq x_i$. Then $x = \sum_{i=1}^n x_i$ has the desired properties. Indeed, any $e \in \ball(E)$ can be represented as $e = \sum_i \alpha_i e_i$, with $|\alpha_i| \leq 1$. Then $Ve = \sum_i \alpha_i V e_i \leq x$, since $\alpha_i V e_i \leq x_i$; the lower bound is established similarly.
\end{proof}

\begin{proof}[Proof of \Cref{p:into cone}]
For convenience, we assume that $\|T\| = 1$. Pick $C > 0$, so that $\cc X$ is $C'$-generating for some $C' \in (0,C)$. 
Find $e^\flat \in E^{\flat+}$ so that $\langle e^\flat, e \rangle \geq \|e\|$ for any $e \in E^+$ (such an $e^\flat$ exists by \Cref{l:positive functional}). Further, use \Cref{l:order bounded} to find $x \in \cc X^+$ so that $-\|e\| x \leq Te \leq \|e\| x$ for any $e \in \ball(E)$.

Let $\delta = \vr/(4 \|e^\flat\|)$, $\sigma = \vr/(4\|x\|)$, and find in $\ball(E)^+$ a $\sigma$-net $(e_k)_{k=1}^N$.
By \Cref{c:almost cone}, there exists $S \in B(E,\cc X)$ so that $1-\vr/2 < \|S\| < 1+\vr/2$ so that $Te = Se$ whenever $Te \in \cc X$, $\langle Te, f \rangle = \langle f, Se \rangle$ for any $e \in E$ and $f \in F$, and so that for any $e \in E^+$ there exists $x = x(e) \in \cc X$ so that $\|x\| \leq \delta/(NC)$, and $Se + x \in \cc X^+$. For each $k$, find $y_k \in \cc X^+$ so that $\|y_k\| \leq \delta/N$, and $y_k \geq \pm x(e_k)$ (so $Se_k + y_k \in \cc X^+$). Let $y = \sum_{k=1}^N y_k$, then $\|y\| < \delta$, and $Se_k + y \in \cc X^+$ for any $k$.

For a generic $e \in \ball(E)^+$, find $k$ so that $\|e - e_k\| \leq \sigma$, then $Se \geq Se_k - \sigma x$, and therefore, $Se + y + \sigma x \in \cc X^+$ for any $e \in \ball(E)^+$.

We claim that the operator $U$, defined by $U e = Se + \langle e^\flat, e \rangle (y + \sigma x)$ (in other words, $U = S + e^\flat \otimes (y + \sigma x)$) has the desired properties. Indeed,
$\|U - S\| \leq \|e^\flat\| (\|y\| + \sigma \|x\|) \leq \vr/2$, hence
$$
1 - \vr < \|S\| - \|U-S\| \leq \|U\| \leq \|S\| + \|U-S\| < 1 + \vr .
$$
Further, for every $e \in E, f \in F$,
$$
\big|\langle Te, f \rangle - \langle f, Ue \rangle\big| = \big|\langle f, Se \rangle - \langle f, Ue \rangle\big| \leq \|U-S\| \|e\| \|f\| \leq \vr \|e\| \|f\| .
$$
Similarly, we verify that $\|Te - Ue\| \leq \vr \|e\|$ for $e \in T^{-1}(X)$. Finally, for $e \in \sphere(E)^+$,
$$
Ue = Se + \langle e^\flat, e \rangle (y + \sigma x) \geq Se + y + \sigma x \geq 0 ,
$$
and therefore, $U(E^+) \subset \cc X^+$. 
\end{proof}

Generation properties of $\cc X^+$ need not be assumed if $E$ has special structure.

\begin{prop}\label{p:LR for lattices}
Suppose $E$ is a finite dimensional Banach lattice, and $\cc X$ is an ordered Banach space. If $T \in B(E,\cc X^{\flat\flat})$ satisfies $T(E^+) \subset \cc X^{\flat\flat+}$, then for every $\vr > 0$ and any finite dimensional $F \subset \cc X^\flat$ there exists $U \in B(E,\cc X)$ so that  $(1-\vr) \|T\| \leq \|U\| \leq (1+\vr) \|T\|$, $\big| \langle Te, f \rangle - \langle f, Ue \rangle \big| \leq \varepsilon \|e\| \|f\|$ for any $e \in E$ and $f \in F$, $\|Te - Ue\| \leq \vr \|e\|$ for $e \in T^{-1}(X)$, and $U(E^+) \subset \cc X^+$.
\end{prop}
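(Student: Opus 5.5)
The plan is to exploit the fact that the positive cone of a finite dimensional Banach lattice is \emph{simplicial}. Concretely, $E$ has a basis of pairwise disjoint positive atoms $e_1, \ldots, e_n$, which we normalize so that $\|e_i\| = 1$, and $E^+ = \{\sum_i \alpha_i e_i : \alpha_i \geq 0\}$. Hence an operator $U \in B(E,\cc X)$ satisfies $U(E^+) \subset \cc X^+$ as soon as $Ue_i \in \cc X^+$ for every $i$. This replaces the generating hypothesis on $\cc X$ and the strictly positive functional used in the proof of \Cref{p:into cone}: we only need to push finitely many vectors into the cone, not the whole of $\ball(E)^+$.

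Let $e_1^\flat, \ldots, e_n^\flat \in E^\flat$ be the biorthogonal (coordinate) functionals, and set $K = \max_i \|e_i^\flat\|$, which is finite since $\dim E < \infty$. We may assume $T \neq 0$ and, by scaling, $\|T\| = 1$ (if $T = 0$, take $U = 0$). Fix a small $\eta > 0$, to be determined below. Apply \Cref{c:almost cone} with $\eta$ in place of $\vr$ and with the given $F$. This yields $S \in B(E,\cc X)$ with the following properties:
\begin{itemize}
\item $1-\eta \leq \|S\| \leq 1+\eta$;
\item $\langle Te, f\rangle = \langle f, Se\rangle$ for all $e \in E$ and $f \in F$;
\item $Se = Te$ for $e \in T^{-1}(\cc X)$;
\item $S(\ball(E)^+) \subset (\cc X^+)_\eta$.
\end{itemize}
In particular, for each $i$ we can choose $x_i \in \cc X^+$ with $\|Se_i - x_i\| \leq 2\eta$.

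Now define $U = S + \sum_{i=1}^n e_i^\flat \otimes (x_i - Se_i)$, so that $Ue_i = x_i \in \cc X^+$ for every $i$. By the first paragraph, $U(E^+) \subset \cc X^+$. Moreover $\|U - S\| \leq 2nK\eta$. Choosing $\eta$ so small that $\eta + 2nK\eta \leq \vr$ then gives all the required estimates:
\begin{itemize}
\item $1-\vr \leq \|U\| \leq 1+\vr$;
\item $|\langle Te, f\rangle - \langle f, Ue\rangle| = |\langle f, (S-U)e\rangle| \leq \vr \|e\|\|f\|$ for $e \in E$, $f \in F$;
\item $\|Te - Ue\| = \|(S-U)e\| \leq \vr\|e\|$ for $e \in T^{-1}(\cc X)$.
\end{itemize}

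There is no serious obstacle here. The only point requiring care is the structural fact that a finite dimensional Banach lattice is lattice isomorphic to $\R^n$ with the coordinatewise order, so that its cone is generated by $n$ disjoint atoms. This is standard: an Archimedean finite dimensional vector lattice is isomorphic to $\R^n$ with the pointwise order. The remaining bookkeeping is the dependence of $\eta$ on $n$ and $K$, which is harmless because both are fixed by $E$ before $\eta$ is chosen.
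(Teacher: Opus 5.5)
Your proposal is correct and follows essentially the same route as the paper. You write $E^+$ as the cone spanned by disjoint normalized atoms, take $S$ from \Cref{c:almost cone}, and redefine the operator on each atom by moving $Se_i$ to a nearby point of $\cc X^+$, which forces $U(E^+)\subset\cc X^+$ while keeping $\|U-S\|$ small. Your explicit bound $\|U-S\|\le 2nK\eta$ spells out the step the paper calls ``easy to check''; in fact $K=1$ here, since the lattice norm gives $|\alpha_i|\le\|\sum_j\alpha_j e_j\|$.
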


\begin{proof}[Sketch of a proof]
 By \cite[Theorem II.3.6]{Schaefer74}, $E$ is order isomorphic to $\R^n$ (with $n = \dim E$), with its natural order structure. That is, $E$ has a normalized $1$-unconditional basis $(e_i)_{i=1}^n$ so that $e = \sum_i \alpha_i e_i \in E^+$ iff each $\alpha_i \geq 0$.
 As in the proof of \Cref{p:into cone}, assume that $\|T\| = 1$, and find $S \in B(E,\cc X)$ so that $1-\vr/2 < \|S\| < 1+\vr/2$ so that $\langle Te, f \rangle = \langle f, Se \rangle$ for any $e \in E$ and $f \in F$, and so that for any $e \in E^+$ there exists $x = x(e) \in \cc X$ so that $\|x\| \leq \varepsilon/(2N)$, and $Se + x \in \cc X^+$. 
 It is easy to check that $U \in B(E,\cc X)$, defined via $U \sum_i \alpha_i e_i = \sum_i \alpha_i (S e_i + x(e_i))$ has the desired properties.
\end{proof}

\begin{rmk}
In \Cref{p:LR for lattices}, we may not be able to find $U$ so that, in addition to other desirable properties, $\langle Te, f \rangle = \langle f, Ue \rangle$ for any $e \in E, f \in F$, and $Te = Ue$ for $e \in T^{-1}(X)$.
To show this, we re-work an example from \cite{bernau1980unified}. Denote by $\kappa$ the canonical embedding of $c_0$ into its bidual. By e.g.~\cite[Example I.1.4(d)]{HWW-M-ideals}, $\kappa$ is an M-embedding, hence $\kappa^* : \ell_1^{**} \to \ell_1$ is an L-projection.
Combining Proposition 1.2.11 and Theorem 2.4.12 of \cite{M-N}, we conclude that $\ell_1$ is a projection band in its bidual. Clearly the band projection from $\ell_1^{**}$ onto $\ell_1$ is an L-projection, and, by the uniqueness of such object \cite[Proposition I.1.2]{HWW-M-ideals}, it coincides with $\kappa^*$.

Now find fully supported positive $u \in \ell_1$ and $f_0 \in c_0$, and also a positive $v \in \ell_1^{**}$ disjoint from $\ell_1$. Find a positive $f_1 \in \ell_1^*$ so that $\langle v, f_1 \rangle > 0$.
Consider $F = \Span[\kappa f_0, f_1] \subset \ell_1^*$, and define $T : \ell_1^2 \to \ell_1^{**}$ by $T \delta_1 = u, T \delta_2 = v$ ($\delta_1, \delta_2$ is the canonical basis of $\ell_1^2$; clearly $T$ is bipositive).
Suppose, for the sake of contradiction, that there exists a positive $U : \ell_1^2 \to \ell_1$ so that $\langle Te, f \rangle = \langle f, Ue \rangle$ for any $e \in E, f \in F$, and $Te = Ue$ for $e \in T^{-1}(\ell_1)$.
In particular, we have $U \delta_1 = u$, $\langle v, \kappa f_0 \rangle = \langle \kappa f_0, U \delta_2 \rangle$, and $\langle f_1, U \delta_2 \rangle > 0$. Recalling that $\kappa^*$ is the band projection onto $\ell_1$, we conclude that $\kappa^* v = 0$, hence $\langle \kappa f_0 , v \rangle = 0$. As $f_0$ has full support, we conclude that $U \delta_2 = 0$, which clashes with $\langle f_1, U \delta_2 \rangle > 0$.
\end{rmk}

Taking $E = \R$ in \Cref{p:LR for lattices}, we obtain an ``ordered Goldstine theorem:''

\begin{cor}\label{c:goldstine from behrends}
If $X$ is an ordered Banach space, then $\ball(\cc X^{\flat\flat})^+ = \overline{\ball(\cc X)^+}^{{\textrm{weak}}^*}$. 
\end{cor}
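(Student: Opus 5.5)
We prove the two inclusions separately, taking $E = \R$ (with its usual order and norm) in \Cref{p:LR for lattices}.

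\emph{The inclusion $\supset$.} By Banach--Alaoglu, $\ball(\cc X^{\flat\flat})$ is weak$^*$ closed. By the discussion preceding \Cref{p:Banach Alaoglu}, $\cc X^{\flat\flat+}$ is weak$^*$ closed, being defined by the conditions $\langle x^{\flat\flat}, x^\flat\rangle \geq 0$ for $x^\flat \in \cc X^{\flat+}$. The canonical embedding maps $\ball(\cc X)^+$ into both sets, since $x\in\cc X^+$ iff $x \in \cc X^{\flat\flat+}$. Hence the weak$^*$ closure of $\ball(\cc X)^+$ lies in $\ball(\cc X^{\flat\flat})^+$.

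\emph{The inclusion $\subset$.} Fix $z \in \ball(\cc X^{\flat\flat})^+$. It suffices to show that every basic weak$^*$ neighbourhood
$$
\{ w : |\langle w, f_k\rangle - \langle z, f_k\rangle| < \eta,\ 1\le k\le m\}, \qquad f_1,\dots,f_m \in \cc X^\flat,
$$
meets $\ball(\cc X)^+$. The case $z=0$ is trivial, so assume $z \neq 0$. Define $T : \R \to \cc X^{\flat\flat}$ by $T t = t z$. Then $\|T\| = \|z\| \le 1$ and $T(\R^+) \subset \cc X^{\flat\flat+}$. Since $\R$ is a one-dimensional Banach lattice, \Cref{p:LR for lattices} applies with $F = \Span[f_1,\dots,f_m]$ and some small $\vr>0$. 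It yields a positive $U : \R\to\cc X$ with $\|U\|\le(1+\vr)\|z\|$ and $|\langle z, f\rangle - \langle f, U1\rangle| \le \vr\|f\|$ for all $f\in F$. Put $x = U1 \in \cc X^+$, so that $\|x\| \le (1+\vr)$.

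The only remaining point is the norm: $x$ may have norm slightly above $1$. We therefore replace $x$ by $x' = x/(1+\vr) \in \ball(\cc X)^+$; positivity survives since $\cc X^+$ is a wedge. Then
$$
|\langle f_k, x'\rangle - \langle z, f_k\rangle| \le \frac{\vr}{1+\vr}\,|\langle f_k, x\rangle| + \vr\|f_k\| \le \vr(\|z\|+\vr)\|f_k\| + \vr \|f_k\| \le 3\vr\|f_k\|.
$$
Choosing $\vr < \eta/(3\max_k\|f_k\|)$ places $x'$ in the given neighbourhood. This proves the inclusion $\subset$, and the two inclusions together give the equality.
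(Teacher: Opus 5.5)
Your proof is correct and follows the paper's route: the paper obtains this corollary in one line by taking $E = \R$ in \Cref{p:LR for lattices}, and you supply the details it leaves implicit, namely the easy inclusion (via weak$^*$ closedness of $\ball(\cc X^{\flat\flat})$ and of $\cc X^{\flat\flat+}$) and the rescaling by $1+\vr$ that puts the approximant inside the unit ball. One cosmetic point: your final estimate $\vr(\|z\|+\vr)\|f_k\| + \vr\|f_k\| \le 3\vr\|f_k\|$ tacitly uses $\vr \le 1$, which you should state explicitly when you fix $\vr$.
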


\section{Matricial structures and matricial polars}\label{prelim}

The useful facts collected here will be used throughout the paper.

\subsection{Operator spaces: an introduction}\label{OS-intro}

An (abstract) \emph{operator} space, or a \emph{matricially normed space}, is, for us, a normed space $\cc X$, equipped with its sequence of \emph{matricial norms} $\| \cdot \|_n$ on $S_\infty^n(\cc X)$ (the space of $\cc X$-valued $n \times n$ matrices, with $n \in \N$; we identify $\cc X$ with $S_\infty^1(\cc X)$); these norms have to satisfy \emph{Ruan's axioms}:
\begin{enumerate}
    \item For any $a, b \in S_\infty^{m,n}$ and $x \in S_\infty^n(\cc X)$, $\|a \cdot x \cdot b^*\|_m \leq \|a\| \|b\| \|x\|_n$. Here, $a \cdot x \cdot b^*$ is the shorthand for $(a \otimes I_{\cc X}) \cdot x \cdot (b^* \otimes I_{\cc X})$.
    \item For any $x \in S_\infty^m(\cc X)$ and $x \in S_\infty^n(\cc X)$, $\| \begin{pmatrix} x & 0 \\ 0 & y \end{pmatrix} \|_{m+n} = \max \big\{ \|x\|_m , \|y\|_n \big\}$.
\end{enumerate}
The prototypical example of an operator space is $B(H)$ ($H$ is a Hilbert space); the matricial norms come from the identification $S_\infty^n(B(H)) = B(\ell_2^n(H))$.
Importantly, every operator space $\cc X$ has a concrete representation: there exists a Hilbert space $H$ and a map $J : \cc X \to B(H)$ so that, for every $n$, $I_{S_\infty^n} \otimes J : S_\infty^n(X) \to S_\infty^n(B(H))$ is an isometry.

For an operator $T : \cc X  \to \cc Y$ ($\cc X, \cc Y$ are operator spaces), we define its \emph{c.b. norm} $\cbnorm{T}$ as $\|I_{S_\infty} \otimes T : S_\infty(\cc X) \to S_\infty(\cc Y)\|$. 
Equivalently, $\cbnorm{T} = \sup_n \|T_n\|$, where $T_n$ stands for $I_{S_\infty^n} \otimes T$. $T$ is \emph{completely bounded} if its c.b.~norm is finite. 
For more information on operator spaces, and maps between them, we refer the reader to \cite{effros-ruan-book}, \cite{paulsen_book}, or \cite{Pisier-Op-Spaces}.

If $\cc X$ is a vector space, and $\cc X^\flat$ is its dual, then, for $x = (x_{ij}) \in S_\infty^m(\cc X)$ and $x = (x^\flat_{k\ell}) \in S_\infty^n(\cc X)$, we define $\mp{x^\flat}{x} = \big( \langle x_{k\ell}^\flat, x_{ij} \rangle \big) \in S_\infty^{mn}$ (the rows and the columns of the resulting $mn \times mn$ matrix are indexed by pairs $(i,k)$ and $(j,\ell)$, respectively).
This permits us to view $x^\flat$ as an operator $\cc X \to S_\infty^n$. If $\cc X$ is equipped with matricial norms, we define $\|x^\flat\|$ as the c.b.~norm of the above operator, or in other words, $\|x^\flat\| = \sup \big\{ \|\mp{x^\flat}{x}\|_{nm} : \|x\|_m \leq 1\big\}$; this expression turns out to equal $\sup \big\{ \|\mp{x^\flat}{x}\|_{nm} : \|x\|_m \leq 1\big\}$ (Smith's Lemma, see e.g.~\cite[Proposition 8.11]{paulsen_book}). 

\subsection{Matricial polars}\label{matricial duality}

Duality between matricially normed spaces was investigated in \cite{effros1997matrix}. We recall some of the important results here, and refer the reader to the original paper for more detail.

If $\cc X$ is a vector space, then $K = (K_n)$ is a \emph{matricial convex set on $\cc X$} if, for every $n$, $K_n \subset S_\infty^n(\cc X)$, and (i) for any $m, n$, $K_m \oplus K_n \subset K_{m+n}$, (ii) $\alpha^* \cdot K_n \cdot \alpha \subset K_m$ for any $\alpha \in \ball(S_\infty^{n,m})$. Clearly, in this setting, each $K_n$ is convex, and contains $0$.

Note that our definition differs slightly from the one given in \cite{effros1997matrix}, where the inclusion $\alpha^* \cdot K_n \cdot \alpha \subset K_m$ was only required for $\|\alpha\| = 1$. However, it is easy to see that a matricial convex sets ``in the original sense'' $(K_n)$ satisfies $0 \in K_1$ iff $(K_n)$ has the properties listed in the preceding paragraph.

Suppose $(\cc X, \cc X^\flat)$ is a dual pair of vector spaces, and $K = (K_n)$ is a matricial convex set on $\cc X$. Define the \emph{matricial polar} of $K$, denoted by $K^\pi = (K_n^\pi)$:
$x^\flat \in S_\infty^n(\cc X^\flat)$ belongs to $K_n^\pi$ iff $\Re \mp{x^\flat}{x} \leq I_{mn}$ (the identity on $\ell_2^{mn}$) for any $x \in K_n$. 
Actually, checking $m=n$ suffices:
\begin{prop}
\label{control dimension}
In the above notation, $x^\flat \in S_\infty^n(\cc X^\flat)$ belongs to $K_n^\pi$ iff $\Re \mp{x^\flat}{x} \leq I_{S_\infty^{n^2}}$ for any $x \in K_n$. 
\end{prop}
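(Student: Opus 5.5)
The \emph{only if} direction is immediate: membership in $K_n^\pi$ requires $\Re \mp{x^\flat}{x} \leq I_{mn}$ for every $m$ and every $x \in K_m$, and $m = n$ is one of these cases. For the converse, fix $x^\flat \in S_\infty^n(\cc X^\flat)$ with $\Re \mp{x^\flat}{y} \leq I_{n^2}$ for all $y \in K_n$. Pick an arbitrary $m$ and $x \in K_m$; we must show $\Re \mp{x^\flat}{x} \leq I_{mn}$. The plan is to reduce every test to the $n \times n$ level by compressing with contractions, and to use the matricial convexity axiom (ii).

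\emph{Key compatibility.} For $\alpha \in S_\infty^{m,n}$ and $x \in S_\infty^m(\cc X)$ we have
$$
(\alpha^* \otimes I_n)\, \mp{x^\flat}{x}\, (\alpha \otimes I_n) = \mp{x^\flat}{\alpha^* \cdot x \cdot \alpha}.
$$
Here the rows and columns of $\mp{x^\flat}{x}$ are indexed by pairs $(i,k)$, with $i$ referring to $x$ and $k$ to $x^\flat$. This identity follows directly from bilinearity of the pairing, entry by entry. Taking real parts (that is, Hermitian parts) commutes with such a compression, so the same identity holds with $\Re$ on both sides.

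\emph{Main step.} It suffices to show $\langle \Re\mp{x^\flat}{x}\,\xi,\xi\rangle \leq 1$ for every unit vector $\xi \in \C^m \otimes \C^n$. Take a Schmidt decomposition $\xi = \sum_{k=1}^{r} s_k\, u_k \otimes v_k$. Here $r \leq \min(m,n)$, the families $(u_k) \subset \C^m$ and $(v_k) \subset \C^n$ are orthonormal, and $\sum_k s_k^2 = 1$. Choose an isometry (in particular a contraction) $\alpha : \C^n \to \C^m$ with $\alpha|k\rangle = u_k$ for $k \leq r$; if $m < n$, take instead a partial isometry with this property, which is still a contraction. Then $\xi = (\alpha \otimes I_n)\zeta$, where $\zeta = \sum_k s_k |k\rangle \otimes v_k$ is a unit vector in $\C^n \otimes \C^n$.

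By axiom (ii) of matricial convexity (applied with the roles of $m$ and $n$ appropriately relabeled, so that $\alpha \in \ball(S_\infty^{m,n})$), the compression $y = \alpha^* \cdot x \cdot \alpha$ lies in $K_n$. The compatibility identity and the hypothesis then give
$$
\langle \Re\mp{x^\flat}{x}\,\xi,\xi\rangle = \langle \Re\mp{x^\flat}{y}\,\zeta,\zeta\rangle \leq \|\zeta\|^2 = 1,
$$
which is what we needed.

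The only real obstacle is bookkeeping. One must check the index ordering in $\mp{\cdot}{\cdot}$, so that the compression acts on the $\cc X$-index and not the $\cc X^\flat$-index. One must also make sure the contraction $\alpha$ has the dimensions required by axiom (ii). Note that the Schmidt argument covers $m < n$ as well, so no separate padding argument (for instance embedding $x \oplus 0 \in K_n$) is needed there.
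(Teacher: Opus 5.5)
Your proof is correct. The compression identity $(\alpha^*\otimes I_n)\,\mp{x^\flat}{x}\,(\alpha\otimes I_n)=\mp{x^\flat}{\alpha^*\cdot x\cdot\alpha}$ does act on the $\cc X$-index. The Schmidt decomposition writes any $\xi\in\C^m\otimes\C^n$ as $(\alpha\otimes I_n)\zeta$ for a contraction $\alpha$, and the paper's definition of matricial convexity admits all contractions, not just isometries, so you can land directly in $K_n$ without padding by $0$.

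The paper gives no proof of its own here; it recalls the fact from \cite{effros1997matrix}. The standard argument there is this same reduction: any vector of $\C^m\otimes\C^n$ lies in $W\otimes\C^n$ with $\dim W\le n$, and one compresses $x$ onto $W$.
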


Further, we shall say $K = (K_n)$ is \emph{weakly-closed} or $\sigma(\mathcal X, \mathcal X^\flat)$-closed if for each $n \in \mathbb N$, if $(x_i)_{i \in I} \subset K_n$ such that $\sp{x^\flat}{x_i} \to \sp{x^\flat}{x}$ for every $x^\flat \in S_\infty^n(\mathcal X^\flat)$, then $x \in K_n$.

\begin{thm}[Separation theorem]
\label{separation}
    Suppose $\cc X$ and $\cc X^\flat$ are as above, $K = (K_n)$ is weakly closed, and matricial convex on $\cc X$, and $x \in S_\infty^n(\cc X)$ doesn't belong to $K_n$.
    Then there exists $x^\flat \in K_n^\pi$ so that $\Re \mp{x^\flat}{x} \not\leq I_{n^2}$.
\end{thm}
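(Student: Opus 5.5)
We reduce the separation theorem to the classical Hahn–Banach separation theorem, and then use the matricial convexity of $K$ to turn a scalar functional into a matrix-valued one. This is the Effros–Winkler argument; the only extra care is that our matricial convex sets are closed under compressions by contractions, not just by isometries.

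\emph{Step 1: scalar separation.} Since $K_n$ is convex, weakly closed, contains $0$, and $x \notin K_n$, the Hahn–Banach theorem gives a linear functional $F$ on $S_\infty^n(\cc X)$, continuous for the pairing with $S_\infty^n(\cc X^\flat)$, such that
\[
\Re F(y) \leq 1 \ \text{ for all } y \in K_n, \qquad \Re F(x) > 1 .
\]
Such an $F$ has the form $F(y) = \sum_{i,j} \langle f_{ij}, y_{ij} \rangle$ with $f_{ij} \in \cc X^\flat$.

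\emph{Step 2: a state on $S_\infty^n$.} For $\alpha \in \ball(S_\infty^{n})$ we have $\alpha^* \cdot y \cdot \alpha \in K_n$ whenever $y \in K_n$. Hence the function
\[
\alpha \mapsto \sup_{y \in K_n} \Re F(\alpha^* \cdot y \cdot \alpha) \leq 1
\]
is controlled. The standard minimax argument shows the following: the set of functions $\alpha \mapsto 1 - \Re F(\alpha^* y \alpha)$ is a convex cone (using direct sums, since $K_n \oplus K_n \subset K_{2n}$, and compressions back to $n$). Each of these functions is nonnegative on the unit ball. A Hahn–Banach or minimax argument on the compact set of states then produces a state $\varphi$ on $S_\infty^n$, i.e.\ a density matrix $\rho \ge 0$ with $\tr \rho = 1$, such that
\[
\Re F(\alpha^* \cdot y \cdot \alpha) \leq \varphi(\alpha^*\alpha) \quad \text{for all } y \in K_n, \ \alpha \in S_\infty^n .
\]
Perturb $\rho$ to $(1-\delta)\rho + \delta I/n$, so that it is invertible; since $\Re F(x)>1$ strictly, the strict inequality survives for small $\delta$.

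\emph{Step 3: matrix functional.} Write $\rho = \sum_k \lambda_k |k\rangle\langle k|$ and let $\beta = \rho^{1/2}$. Define $x^\flat \in S_\infty^n(\cc X^\flat)$ by choosing its entries so that
\[
\langle \xi \otimes \eta^* ,\, \mp{x^\flat}{y} \rangle = F\big(\beta^{-1}\text{-conjugated form}\big) .
\]
That is, $\mp{x^\flat}{y}$, viewed as an operator on $\ell_2^n \otimes \ell_2^n$, has compressions to vectors $\gamma = \sum \gamma_{ik} |i\rangle|k\rangle$ equal to $F(\alpha^* y \alpha)$, where $\alpha$ is the $n\times n$ matrix associated with $\gamma$, composed with $\beta^{-1}$. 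The inequality of Step 2 then reads
\[
\langle \gamma | \Re\mp{x^\flat}{y} | \gamma\rangle \le \|\gamma\|^2 ,
\]
so $\Re \mp{x^\flat}{y} \le I_{n^2}$ for all $y \in K_n$. By \Cref{control dimension}, this gives $x^\flat \in K_n^\pi$. Taking $\gamma$ to be the vector corresponding to $\alpha = \beta$ recovers $F(x) > 1 = \|\gamma\|^2$, so $\Re \mp{x^\flat}{x} \not\le I_{n^2}$.

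The main obstacle is Step 2, the construction of the state $\varphi$ dominating the compressed functional. This is where matricial convexity (direct sums plus compressions) is genuinely used. I would follow Effros–Winkler's argument: consider the cone of real functions on the state space of $S_\infty^n$ given by
\[
\varphi \mapsto \sum_r \big( \varphi(\alpha_r^*\alpha_r) - \Re F(\alpha_r^* y_r \alpha_r) \big),
\]
show that no such function is strictly negative everywhere (by combining the finitely many $y_r$ into a single element of $K_n$ via a direct sum and an isometric compression), and apply the separation theorem in the finite-dimensional space $C(\text{states})$.
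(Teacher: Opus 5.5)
Your argument is correct and is the Effros--Winkler proof: scalar Hahn--Banach, then a dominating state on $S_\infty^n$ obtained by separating the cone $\{\varphi\mapsto\varphi(\alpha^*\alpha)-\Re F(\alpha^*y\alpha)\}$ from the strictly negative affine functions, then a GNS-type construction; the paper gives no proof of this theorem and simply cites \cite{effros1997matrix} for it. One point to make explicit: after replacing $\rho$ by $\rho_\delta=(1-\delta)\rho+\delta I/n$, the domination $\Re F(\alpha^*y\alpha)\le\rho_\delta(\alpha^*\alpha)$ can fail, so you must also replace $F$ by $(1-\delta)F$, and this is exactly where the strictness of $\Re F(x)>1$ is used.
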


\section{Matricial order operator spaces: definitions}\label{s: basic properties}

In the 1970s, operator systems (that is, self-adjoint subspaces of $B(H)$, for some Hilbert space $H$) appeared in the literature (such as e.g.~\cite{choi1977injectivity}) as a natural generalization of the GNS construction for $C^*$-algebras.
As with operator spaces, for an operator system $\cc X$ we identify $S_\infty^n(\cc X)$ with a subspace of $B(\ell_2^n(H))$, with the inherited order. This is an elementary example of a matricial order space, which we define in the following manner:

\begin{defn}\label{star vector space}
    A \emph{$*$-vector space} is a complex topological vector space $\cc X$, equipped with a continuous involution $*$. If our space is normed, then the involution is assumed to be isometric.
    The \emph{Hermitian} part of $\cc X$, denoted by $\cc X_h$, is the real-linear subspace of $\cc X$, consisting of elements invariant under involution.
\end{defn}


Following \cite{humeniuk2023extension}, and earlier works \cite{Schreiner-JFA, Schreiner-DISS}, we consider \emph{matricial order structures} on $*$-vector spaces: 

\begin{defn}\label{d:ordered properties}
A \emph{matricial order space} is a pair $(\cc X, \{S_\infty^n(\cc X)^+\}_{n \in \bb N})$ where $\cc X$ is a $*$-vector space, and the \emph{positive wedge} $\{S_\infty^n(\cc X)^+\}_{n \in \bb N}$ satisfies:
\begin{enumerate}
    \item
    For any $n \in \N$, $S_\infty^n(\cc X)^+$ is closed, and is a wedge in $S_\infty^n(\cc X)^+$. 
    \item
    For any $m,n \in \N$, $x \in S_\infty^n(\cc X)^+$, and $a \in S_\infty^{m,n}$, $a \cdot x \cdot a^* \in S_\infty^m(\cc X)^+$.
    \item 
    For any $m,n \in \N$, $x \in S_\infty^m(\cc X)^+$ and $y \in S_\infty^n(\cc X)^+$, $x \oplus y := \begin{pmatrix} x & 0 \\ 0 & y \end{pmatrix} \in S_\infty^{m+n}(\cc X)^+$.
    \item
    For any $n$, $S_\infty^n(\cc X)^+ \subset S_\infty^n(\cc X)_h$ (the set of \emph{hermitian}, or \emph{self-adjoint}, elements of $S_\infty^n(\cc X)$).
\end{enumerate}
If, in addition, $\a$ is an operator space semi-norm (that is, a sequence of semi-norms $(\alpha_n)_{n \in \N}$, satisfying Ruan's axioms), we call a triple $(\cc X, \{S_\infty^n(\cc X)^+\}_{n \in \bb N}, \a)$  a \emph{matricial order operator space.}
\end{defn}
Here and throughout this paper, we define the \emph{adjoint} of $x = \sum_{i,j} E_{ij} \otimes x_{ij} \in S_\infty^n(\cc X)$ as $x^* = \sum_{i,j} E_{ij} \otimes x_{ji}^*$, with $E_{ij}$ being matrix units. In other words, $(a \otimes x)^* = a^* \otimes x^*$. We assume that $\|x\| = \|x^*\|$ for any $x \in S_\infty^n(\cc X)$. We shall need a simple linear algebra result.

\begin{lem}\label{lem:hermitian}
For $n \in \N$ and $\cc X$ as above, $S_\infty^n(\cc X)_h$ coincides, as a real vector space, with $[S_\infty^n]_h \otimes \cc X_h$.
\end{lem}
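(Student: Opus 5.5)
We need to show that $S_\infty^n(\cc X)_h$ equals the real span of the elementary tensors $a \otimes x$ with $a \in [S_\infty^n]_h$ and $x \in \cc X_h$. The plan is to prove two inclusions, using only the adjoint rule $(a \otimes x)^* = a^* \otimes x^*$ and the decomposition of $\cc X$ into Hermitian and skew-Hermitian parts.

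The inclusion $[S_\infty^n]_h \otimes \cc X_h \subset S_\infty^n(\cc X)_h$ is immediate. For Hermitian $a$ and $x$ we have $(a \otimes x)^* = a^* \otimes x^* = a \otimes x$. Since the involution is conjugate-linear, real combinations of Hermitian elements remain Hermitian.

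For the reverse inclusion, take $z = \sum_{i,j} E_{ij} \otimes z_{ij}$ with $z = z^*$, which means $z_{ji} = z_{ij}^*$.

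First, each $z_{ij}$ splits as $z_{ij} = u_{ij} + i v_{ij}$, where
\[
u_{ij} = \frac{z_{ij} + z_{ij}^*}{2}, \qquad v_{ij} = \frac{z_{ij} - z_{ij}^*}{2i}.
\]
Both $u_{ij}$ and $v_{ij}$ lie in $\cc X_h$. The condition $z_{ji} = z_{ij}^*$ translates into $u_{ji} = u_{ij}$ and $v_{ji} = -v_{ij}$. In particular $v_{ii} = 0$, so $z_{ii} = u_{ii}$ is Hermitian.

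Next, we group the terms of $z$:
\begin{itemize}
\item For a diagonal entry, $E_{ii} \otimes z_{ii} = E_{ii} \otimes u_{ii}$, and $E_{ii}$ is Hermitian.
\item For each pair $i<j$, the off-diagonal contribution is $E_{ij} \otimes z_{ij} + E_{ji} \otimes z_{ij}^*$. Substituting $z_{ij} = u_{ij} + i v_{ij}$ and $z_{ij}^* = u_{ij} - i v_{ij}$, it equals
\[
(E_{ij} + E_{ji}) \otimes u_{ij} + \big(i E_{ij} - i E_{ji}\big) \otimes v_{ij}.
\]
Both $E_{ij} + E_{ji}$ and $iE_{ij} - iE_{ji}$ are Hermitian matrices, and $u_{ij}, v_{ij} \in \cc X_h$.
\end{itemize}
Summing these pieces exhibits $z$ as a real linear combination of elements of $[S_\infty^n]_h \otimes \cc X_h$.

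The only point requiring care is to interpret $[S_\infty^n]_h \otimes \cc X_h$ as a real tensor product sitting inside $S_\infty^n(\cc X)$. One should check that the natural map is injective, so that this really is an identification of real vector spaces and not just an equality of spans. To see injectivity, pick a real basis $(h_k)$ of $[S_\infty^n]_h$. It is also a complex basis of $S_\infty^n$, since every matrix decomposes uniquely as $a + ib$ with $a, b$ Hermitian. Hence $\sum_k h_k \otimes x_k = 0$ in $S_\infty^n \otimes \cc X$ forces every $x_k = 0$. This is the main (minor) obstacle; the rest is bookkeeping.
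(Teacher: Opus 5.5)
Your proof is correct and is essentially the paper's argument written in matrix-unit coordinates: the paper takes an arbitrary representation $x=\sum_k a_k\otimes x_k$, splits both factors into Hermitian real and imaginary parts, and uses $x=(x+x^*)/2$ to discard the imaginary part, whereas you pair the $(i,j)$ and $(j,i)$ entries via $z_{ji}=z_{ij}^*$, which is the same computation specialized to $a_k=E_{ij}$. Your additional checks are correct and go beyond what the paper writes: the paper does not state the easy inclusion, and it leaves implicit the injectivity of $[S_\infty^n]_h\otimes\cc X_h\to S_\infty^n(\cc X)$, which you prove by noting that a real basis of $[S_\infty^n]_h$ is a complex basis of $S_\infty^n$.
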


\begin{proof}
Suppose $x = x^* \in S_\infty^n(\cc X)_h$. Write $x = \sum_k a_k \otimes x_k$, with $a_k \in S_\infty^n, x_k \in \cc X$. Split both $a_k$ and $x_k$ into their real and imaginary parts. That is, write $a_k = b_k + \iota c_k$, with 
$$
b_k = \frac{a_k+a_k^*}2 , \, \, c_k = \frac{a_k - a_k^*}{2i}
$$
(here $b_k = b_k^*, c_k = c_k^*$). Likewise, write $x_k = y_k + \iota z_k$. Then
\begin{align*}
    a_k \otimes x_k & = \big( b_k \otimes y_k - c_k \otimes z_k \big) + \iota \big( b_k \otimes z_k + c_k \otimes y_k \big) , \\
    (a_k \otimes x_k)^* & = \big( b_k \otimes y_k - c_k \otimes z_k \big) - \iota \big( b_k \otimes z_k + c_k \otimes y_k \big) .
\end{align*}
Therefore,
$$
x = \frac{x+x^*}2 = \sum_k \frac{a_k \otimes x_k + (a_k \otimes x_k )^*}2 = \sum_k \big( b_k \otimes y_k - c_k \otimes z_k \big) . \qedhere
$$
\end{proof}

In the context of \Cref{d:ordered properties}, we say that $S_\infty^n(\cc X)^+$ is the \emph{positive cone} (and not merely a wedge) if $-S_\infty^n(\cc X)^+ \cap S_\infty^n(\cc X)^+ = \{0\}$. It turns out that being a cone is determined ``on the lowest level.''

\begin{lem}\label{lem:when cone}
If $\cc X^+$ is a cone, then $S_\infty^n(\cc X)^+$ is a cone for every $n$.
\end{lem}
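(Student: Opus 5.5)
Suppose $x \in S_\infty^n(\cc X)^+ \cap \big(-S_\infty^n(\cc X)^+\big)$. The plan is to compress $x$ down to the scalar level with vectors and use that $\cc X^+$ is a cone there. This shows that every ``vector state'' of $x$ vanishes, and then polarization recovers the entries $x_{ij}$.

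For a column vector $a \in S_\infty^{1,n}$ (a row $a = (a_1, \ldots, a_n)$, so that $a \cdot x \cdot a^*$ is $1 \times 1$), axiom (2) of \Cref{d:ordered properties} gives
$$
a \cdot x \cdot a^* = \sum_{i,j} a_i \overline{a_j}\, x_{ij} \in \cc X^+ .
$$
Applying the same axiom to $-x$ gives $-a \cdot x \cdot a^* \in \cc X^+$. Since $\cc X^+$ is a cone, $\sum_{i,j} a_i \overline{a_j} x_{ij} = 0$ for every $a \in \C^n$.

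Now specialize $a$.
\begin{itemize}
\item Taking $a = e_i$ gives $x_{ii} = 0$.
\item Taking $a = e_i + e_j$ with $i \neq j$ gives $x_{ij} + x_{ji} = 0$.
\item Taking $a = e_i + \iota e_j$ gives $-\iota x_{ij} + \iota x_{ji} = 0$, that is, $x_{ij} = x_{ji}$.
\end{itemize}
Combining the last two, $x_{ij} = 0$. Hence $x = 0$ and $S_\infty^n(\cc X)^+$ is a cone.

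The only point needing care is the complex scalars in the polarization step. It is fine because $\cc X$ is a complex vector space, the sesquilinear form $a \mapsto a x a^*$ takes values in $\cc X$, and $\cc X$ is linear over $\C$. The cone property is applied only to the elements $a x a^* \in \cc X^+$, so no hermiticity of the individual entries is required.
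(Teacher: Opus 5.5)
Your proof is correct and essentially the paper's argument. You compress by vectors to get $\pm\, a\cdot x\cdot a^* \in \cc X^+$, hence $a\cdot x\cdot a^*=0$, and then test $a=e_i$, $e_i+e_j$ and $e_i+\iota e_j$ to kill the diagonal and off-diagonal entries. The only differences are cosmetic: you polarize with $x_{ji}$ directly rather than rewriting it as $x_{ij}^*$ via self-adjointness, and you call the $1\times n$ matrix $a$ a ``column'' vector when it is a row. Neither affects validity.
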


\begin{proof}
Suppose $x = (x_{ij})_{i,j=1}^n \in S_\infty^n(\cc X)$ is such that $x, -x \in S_\infty^n(\cc X)^+$. Then for any $a \in \ell_2^n$, $\pm a^* \cdot x \cdot a \in \cc X^+$ (here we view $a$ as an $n \times 1$ matrix).

Fixing $i \in \{1, \ldots, n\}$, and taking $a = |i\rangle$, we conclude that $\pm x_{ii} \in \cc X^+$, hence $x_{ii} = 0$,  for any $i$.
Next, for $i \neq j$, take $a = |i\rangle + |j\rangle$ and $|i\rangle + \iota |j\rangle$. This gives us $\pm(x_{ij} + x_{ij}^*), \pm \iota (x_{ij} - x_{ij}^*) \in \cc X^+$. This can only happen when $x_{ij} + x_{ij}^* = 0 = x_{ij} - x_{ij}^*$, so $x_{ij} = 0$.
\end{proof}

\begin{rmk}
As noted above, any operator system (in particular, any $C^*$-algebra) is a matricial order operator space; we further investigate these examples in \Cref{ss:examples}.
While operator systems can be realized as unital self-adjoint subspaces of $B(H)$, this is false for general matricial order operator spaces, see \cite{werner2002matrix} and \cite{debruyn2025archimedean}. 
\end{rmk}

We also need to define positivity properties of maps between matricial order spaces:

\begin{defn}\label{bipositivity}
Suppose $\cc X, \cc Y$ are matricial order spaces. Then an involution-preserving map $T : \cc X \to \cc Y$ is called \emph{completely positive} if $I_{S_\infty^n} \otimes T : S_\infty^n(\cc X) \to S_\infty^n(\cc Y)$ is positive for every $n$ -- that is, the positivity of $x \in S_\infty^n(\cc X)$ implies that of $I_{S_\infty^n} \otimes T(x)$.
If $x \in S_\infty^n(\cc X)^+$ if and only if $I_{S_\infty^n} \otimes T(x) \in S_\infty^n(\cc Y)^+$ for every $n$, we say that $T$ is \emph{completely bipositive}.
\end{defn}

It is easy to see that, if $\cc X_h$ is generating, and $T : \cc X \to \cc Y$ satisfies $T (\cc X^+) \subset \cc Y^+$ for any $n$, then $T$ is automatically involution-preserving.

A classical result states that, for an operator space $\cc X$, any $x^\flat \in \cc X^\flat$ is completely bounded, with $\|x^\flat\| = \|x^\flat\|_{cb}$. Reasoning as in \cite[Proposition 3.8]{paulsen_book}, we easily establish:

\begin{prop}
If $\cc X$ is a matricial normed space, $x^\flat \in \cc X^\flat$ is completely positive if and only if it is positive.
\end{prop}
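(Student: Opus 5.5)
The plan is to imitate the proof of Paulsen's Proposition 3.8 (a positive map into a commutative C$^*$-algebra is completely positive). Only the forward direction needs work: a completely positive $x^\flat$ is positive by taking $n=1$. So assume $x^\flat \in \cc X^{\flat+}$ is positive (and involution-preserving, i.e. $\langle x^\flat, x^*\rangle = \overline{\langle x^\flat, x\rangle}$). Fix $n$ and $x = (x_{ij}) \in S_\infty^n(\cc X)^+$. The goal is to show that the scalar matrix $\mp{x^\flat}{x} = (\langle x^\flat, x_{ij}\rangle)_{i,j} \in S_\infty^n$ is positive semidefinite.

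First, $x = x^*$ by axiom (4) of \Cref{d:ordered properties}, so $x_{ji} = x_{ij}^*$. Since $x^\flat$ preserves the involution, $\mp{x^\flat}{x}$ is Hermitian.

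Next, pick an arbitrary vector $a = \sum_i a_i |i\rangle \in \ell_2^n$ and regard $a^*$ as a $1\times n$ matrix. By axiom (2) of \Cref{d:ordered properties} (compression by $a^* \in S_\infty^{1,n}$), the element
\[
a^* \cdot x \cdot a = \sum_{i,j} \overline{a_i}\, a_j\, x_{ij}
\]
lies in $\cc X^+$. Positivity of $x^\flat$ then gives
\[
0 \le \Big\langle x^\flat, \sum_{i,j} \overline{a_i} a_j x_{ij} \Big\rangle = \sum_{i,j} \overline{a_i}\, a_j \langle x^\flat, x_{ij}\rangle = \langle \mp{x^\flat}{x}\, a \,|\, a\rangle
\]
(up to the paper's bra-ket convention). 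Because this holds for every $a$, the matrix $\mp{x^\flat}{x}$ is positive. Hence $I_{S_\infty^n} \otimes x^\flat$ maps $S_\infty^n(\cc X)^+$ into $(S_\infty^n)^+$ for every $n$, which is complete positivity.

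The only delicate point is bookkeeping, not analysis. One has to make sure the compression $a^*\cdot x\cdot a$ produces exactly the quadratic form of $\mp{x^\flat}{x}$ (conjugates on the correct indices), and that the scalar codomain carries its usual matrix order. The matricial norms play no role. If ``positive'' in the statement is not assumed to include involution-preservation, I would deduce it from the positivity of the quadratic form: a matrix whose quadratic form $\langle \mp{x^\flat}{x} a | a\rangle$ is real for all $a$ is automatically Hermitian.
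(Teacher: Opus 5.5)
Your argument is correct and takes the same route as the paper, which gives no details beyond ``reasoning as in \cite[Proposition 3.8]{paulsen_book}''; that reasoning is your compression of $x \in S_\infty^n(\cc X)^+$ by a vector $a$, followed by $0 \leq \langle x^\flat, a^* \cdot x \cdot a \rangle = a^* \mp{x^\flat}{x} a$. One small remark on your closing hedge: nonnegativity of the quadratic form shows only that $\mp{x^\flat}{x}$ is Hermitian for positive $x$, not that $x^\flat$ preserves the involution on all of $\cc X$; this is moot, because the paper builds self-adjointness into the definition of positivity for elements of the dual (\Cref{ss:duality}).
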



Often, positive cones of our matricial order operator spaces have certain convenient properties:

\begin{defn}\label{d:special ordered properties}
A matricial order operator space $(\cc X, \{S_\infty^n(\cc X)^+\}_{n \in \bb N}, \a)$ is said to be:
\begin{enumerate}
\item $C$-\emph{matricial normal}: 
if $\begin{pmatrix} u_1 & u \\ u^* & u_2 \end{pmatrix} \in S_\infty^{2n}(\cc X)^+$, then $\a_n(u) \leq C \a_n(u_1) \vee \a_n(u_2).$
\item $C$-\emph{matricial generating}:  for any $\vr > 0$, and any $u \in S_\infty^n(\cc X)$, there exist $u_1, u_2 \in S_\infty^n(\cc X)^+$ such that $\begin{pmatrix} u_1 & u \\ u^* & u_2 \end{pmatrix} \in S_\infty^{2n}(\cc X)^+$, and $\a_n(u_1) \vee \a_n(u_2) \leq C \a_n(u) + \e $. If the above condition also holds for $\vr = 0$, we shall say that our cone is $C$-\emph{strongly matricial generating}.
\item \emph{matricial normal} (\emph{matricial generating}) if it is $C$-matricial normal (resp.~$C$-matricial generating) for some $C$.
\end{enumerate}
\end{defn}

As in \Cref{s:order Banach}, we see that, if $\cc X$ is matricial normal, then $S_\infty^n(\cc X)^+$ is a cone, for every $n$.

\begin{rmk}\label{about selfadjointness}
The preceding definition is motivated by normality and generation of ordered Banach spaces, introduced in \Cref{s:order Banach}. To see the connection, suppose $(\cc X, \| \cdot \|)$ is an ordered normed space. Observe first that, if $-y \leq x = x^* \leq y$, then $\begin{pmatrix} y & x \\ x & y \end{pmatrix} \geq 0$. Indeed, $y - x \geq 0$, hence
\[
\begin{pmatrix}  1 & -1 \\ -1 & 1 \end{pmatrix} \otimes (y-x) = \begin{pmatrix}  y-x & x-y \\ x-y & y-x \end{pmatrix} \geq 0 .
\]
Likewise, $y + x \geq 0$, hence
\[
\begin{pmatrix}  1 & 1 \\ 1 & 1 \end{pmatrix} \otimes (y+x) = \begin{pmatrix}  y+x & x+y \\ x+y & y+x \end{pmatrix} \geq 0 .
\]
Averaging the two centered expressions, we obtain $\begin{pmatrix} y & x \\ x & y \end{pmatrix} \geq 0$.

Now suppose $x$ is self-adjoint, and $\begin{pmatrix} y_1 & x \\ x & y_2 \end{pmatrix} \geq 0$. Letting $y = (y_1 + y_2)/2$, we obtain
\[
\begin{pmatrix} 1 \\ 1 \end{pmatrix}^* \begin{pmatrix} y_1 & x \\ x & y_2 \end{pmatrix} \begin{pmatrix} 1 \\ 1 \end{pmatrix} = 2(y+x) \geq 0 .
\]
Also,
\[
\begin{pmatrix} 1 \\ -1 \end{pmatrix}^* \begin{pmatrix} y_1 & x \\ x & y_2 \end{pmatrix} \begin{pmatrix} 1 \\ -1 \end{pmatrix} = 2(y-x) \geq 0 .
\]
Thus, $-y \leq x \leq y$, and $\|y\| \leq \|y_1\| \vee \|y_2\|$
\end{rmk}

\begin{rmk}
The above reasoning shows that, for a matricial generating $\cc X$, $x \in S_\infty^n(\cc X)$ is hermitian if and only if it is a difference of two positive elements. Indeed, any positive element is hermitian, hence real linear combinations of such are hermitian as well. On the other hand, if $x = x^* \in S_\infty^n(\cc X)$, then matricial generation implies the existence of $y \in S_\infty^n(\cc X)^+$ so that $y \geq x \geq -y$. Then $x = (y+x)/2 - (y-x)/2$; both terms are positive.
\end{rmk}

\begin{lem}\label{check-selfadj}
Suppose $\cc X$ is a matricial order operator space.

$(1)$ $\cc X$ is $C$-matricial normal if and only if, for any $u \in S_\infty^n(\cc X)_h$ and $v \in S_\infty^n(\cc X)^+$ with $v \geq \pm u$, we have $\alpha_n(u) \leq C \alpha_n(v)$.

$(2)$ $\cc X$ is $C$-matricial generating if and only if, for any $u \in S_\infty^n(\cc X)_h$, and any $\vr > 0$, we can find $v \in S_\infty^n(\cc X)^+$ with $v \geq \pm u$, we have $\alpha_n(v) < C\alpha_n(u) + \vr$.
\end{lem}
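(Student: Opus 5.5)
The plan is to pass between the $2\times 2$ block condition of \Cref{d:special ordered properties} and the order-interval condition $v \geq \pm u$, using the matrix manipulations of \Cref{about selfadjointness} applied at level $n$. By Ruan's axioms, conjugation by a scalar matrix $a \otimes I_n$ costs at most $\|a\|$ in norm, and by the definition of a matricial order space it preserves positivity. Together these let us move between the two formulations with no loss in constants.

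\emph{Key identity.} For $u = u^*$ and $v \in S_\infty^n(\cc X)^+$ we have
\[
v \geq \pm u \iff \begin{pmatrix} v & u \\ u & v \end{pmatrix} \geq 0 .
\]
For the forward direction, tensor $v-u \geq 0$ with $\begin{pmatrix} 1 & -1 \\ -1 & 1\end{pmatrix}$ and $v+u \geq 0$ with $\begin{pmatrix} 1 & 1 \\ 1 & 1\end{pmatrix}$; both operations are instances of $a\cdot x\cdot a^*$ with $a = \begin{pmatrix} 1 \\ \mp 1\end{pmatrix} \otimes I_n$. Then average, as in \Cref{about selfadjointness}. For the converse, compress by $\begin{pmatrix} I_n \\ \pm I_n\end{pmatrix}$. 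More generally, if $\begin{pmatrix} u_1 & u \\ u & u_2 \end{pmatrix} \geq 0$ with $u$ hermitian, then the same compression gives $v = (u_1+u_2)/2 \geq \pm u$, and $\alpha_n(v) \leq \alpha_n(u_1) \vee \alpha_n(u_2)$.

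\emph{Part (1).} Suppose first that $\cc X$ is $C$-matricial normal. Given $v \geq \pm u$, the key identity produces a positive block matrix with diagonal $(v,v)$, so $\alpha_n(u) \leq C\alpha_n(v)$. For the converse, take a general $u \in S_\infty^n(\cc X)$ and $\begin{pmatrix} u_1 & u \\ u^* & u_2 \end{pmatrix} \geq 0$. The trick is to pass to the hermitian element
\[
U = \begin{pmatrix} 0 & u \\ u^* & 0\end{pmatrix} \in S_\infty^{2n}(\cc X)_h .
\]
We have $\alpha_{2n}(U) = \alpha_n(u)$, since $U$ is a permutation of $u \oplus u^*$ and the involution is isometric. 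Set $V = u_1 \oplus u_2$. Then $V \pm U \geq 0$: the matrix $V+U$ is the given positive one, and $V - U$ is its conjugate by $\mathrm{diag}(I_n, -I_n)$. Also $\alpha_{2n}(V) = \alpha_n(u_1) \vee \alpha_n(u_2)$ by Ruan's second axiom. Applying the hypothesis at level $2n$ gives the conclusion.

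\emph{Part (2).} The argument is symmetric. Assume $C$-matricial generation and take a hermitian $u$. Choose positive $u_1, u_2$ with $\begin{pmatrix} u_1 & u \\ u & u_2 \end{pmatrix} \geq 0$ and small norms. Then $v = (u_1+u_2)/2$ works, by the compression remark above. Conversely, given an arbitrary $u$, apply the hypothesis at level $2n$ to $U$ to obtain $V \geq \pm U$ with $\alpha_{2n}(V) < C\alpha_n(u)+\vr$. Write $V = \begin{pmatrix} v_{11} & * \\ * & v_{22}\end{pmatrix}$. The key identity gives $\begin{pmatrix} V & U \\ U & V\end{pmatrix} \geq 0$, and compressing to the $(1,1)$ block of the first copy and the $(2,2)$ block of the second yields $\begin{pmatrix} v_{11} & u \\ u^* & v_{22}\end{pmatrix} \geq 0$. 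The diagonal entries $v_{11}, v_{22}$ are positive as compressions of $V$, and their norms are at most $\alpha_{2n}(V)$ by Ruan's first axiom.

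\emph{Main obstacle.} The only delicate point is the reduction from non-hermitian $u$ to the hermitian $U$ in the converse directions. One must check that the needed sign changes and compressions are implemented by scalar conjugations $a \cdot x \cdot a^*$, so that they preserve positivity, and that the norms are controlled exactly by Ruan's axioms.
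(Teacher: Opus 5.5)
Your proposal is correct and follows essentially the same route as the paper. You use the level-$n$ version of \Cref{about selfadjointness}, pass from a general $u$ to the hermitian dilation $\begin{pmatrix} 0 & u \\ u^* & 0\end{pmatrix}$ at level $2n$, and for the converse of (2) compress $\begin{pmatrix} V & U \\ U & V\end{pmatrix}$ to block rows and columns $1$ and $4$. The only minor difference is in the converse of (1): you obtain $V - U \geq 0$ directly by conjugating with $\mathrm{diag}(I_n,-I_n)$, which is slightly quicker than the paper's sum of two compressions followed by an appeal to the remark.
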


\begin{proof}
(1) If $\cc X$ is $C$-matricial normal, then, by \Cref{about selfadjointness}, for any $u \in S_\infty^n(\cc X)_h$ and $v \in S_\infty^n(\cc X)^+$ with $v \geq \pm u$, we have $\alpha_n(u) \leq C \alpha_n(v)$.

It remains to establish the converse. Suppose $u \in S_\infty^n(\cc X)$ is such that $\begin{pmatrix}  x_1 & u  \\  u^* & x_2  \end{pmatrix} \in S_\infty^{2n}(\cc X)^+$, and $\alpha_n(x_i) \leq 1$ for $i=1,2$. We have to show that $\alpha_n(u) \leq C$.

First note that $\overline{u} = \begin{pmatrix}  0 & u  \\  u^* & 0  \end{pmatrix}$ is self-adjoint, and $\alpha_n(u) = \alpha_{2n}(\overline{u})$. Likewise, $\overline{x} = \begin{pmatrix}  x_1 & 0  \\  0 & x_2  \end{pmatrix}$ is positive, and $\vee_i \alpha_n(x_i) = \alpha_{2n}(\overline{x})$. A simple computation shows that
\begin{align*}
&
\begin{pmatrix}  \overline{x} & \overline{u}  \\  \overline{u} & \overline{x}  \end{pmatrix}  =
\begin{pmatrix}  x_1 & 0 & 0 & u  \\  0 & x_2 & u^* & 0  \\  0 & u & x_1 & 0  \\   u^* & 0 & 0 & x_2   \end{pmatrix}  
\\
&
= 
\begin{pmatrix}  1 & 0  \\  0 & 0  \\  0 & 0  \\   0 & 1   \end{pmatrix}  \begin{pmatrix}  x_1 & u  \\  u^* & x_2  \end{pmatrix} \begin{pmatrix}  1 & 0 & 0 & 0  \\  0 & 0 & 0 & 1   \end{pmatrix}  +
\begin{pmatrix}  0 & 0  \\  0 & 1  \\  1 & 0  \\   0 & 0   \end{pmatrix}  \begin{pmatrix}  x_1 & u  \\  u^* & x_2  \end{pmatrix} \begin{pmatrix}  0 & 0 & 1 & 0  \\  0 & 1 & 0 & 0   \end{pmatrix} \geq 0 ,
\end{align*}
By \Cref{about selfadjointness} again, $\overline{x} \geq \pm \overline{u}$, which leads to $C\alpha_{2n}(\overline{x}) \geq \alpha_{2n}(\overline{u})$. In light of our earlier discussion on $\alpha_{2n}(\overline{x})$ and $\alpha_{2n}(\overline{u})$, we are done.

(2) If $\cc X$ is $C$-matricial generating, then \Cref{about selfadjointness} implies that for any $u \in S_\infty^n(\cc X)_h$ and any $\vr > 0$ there exists $v \in S_\infty^n(\cc X)^+$ so that $v \geq \pm u$ and $\alpha_n(v) < C \alpha_n(u) + \vr$.

Conversely, suppose for any self-adjoint $u$ and positive $\vr$ there exists $v$ with the above properties. Fix $u \in S_\infty^n(\cc X)$ and $\vr > 0$; our goal is to find $x_1, x_2 \in S_\infty^n(\cc X)^+$ so that $\vee_i \alpha_n(x_i) < C \alpha_n(u) + \vr$, and $\begin{pmatrix}  x_1 & u  \\  u^* & x_2  \end{pmatrix}  \in S_\infty^{2n}(\cc X)^+$.

As in part (1), let $\overline{u} = \begin{pmatrix}  0 & u  \\  u^* & 0  \end{pmatrix}$, and note that $\alpha_{2n}(\overline{u}) = \alpha_n(u)$. Find $v \in S_\infty^{2n}(\cc X)^+$ so that $v \geq \pm \overline{u}$, and $\alpha_{2n}(v) < C \alpha_{2n}(\overline{u}) + \vr = C \alpha_n(u) + \vr$.

Write $v = \begin{pmatrix}  x_1 & x  \\  x^* & x_2 \end{pmatrix}$. By \Cref{about selfadjointness},
$$
\begin{pmatrix} 
x_1 & x & 0 & u \\
x^* & x_2 & u^* & 0 \\
0 & u & x_1 & x \\
u^* & 0 & x^* & x_2 
\end{pmatrix} \in S_\infty^{4n}(\cc X)^+ .
$$
Consequently,
$$
\begin{pmatrix}    x_1 & u   \\  u^* & x_2  \end{pmatrix}  =
\begin{pmatrix}    1 & 0 & 0 & 0  \\  0 & 0 & 0 & 1  \end{pmatrix}
\begin{pmatrix} 
x_1 & x & 0 & u \\
x^* & x_2 & u^* & 0 \\
0 & u & x_1 & x \\
u^* & 0 & x^* & x_2 
\end{pmatrix} 
\begin{pmatrix}    1 & 0 \\ 0 & 0  \\  0 & 0 \\ 0 & 1  \end{pmatrix}
\in S_\infty^{2n}(\cc X)^+ .
$$
To conclude the proof, note that
$$
\alpha_n(x_1) \vee \alpha_n(x_2) \leq \alpha_{2n} \Big( \begin{pmatrix}    x_1 & u   \\  u^* & x_2  \end{pmatrix}   \Big) < C \alpha_n(u) + \vr .  \qedhere
$$
\end{proof}

The following result re-states matricial normality and generation.

\begin{lem}\label{reformulate normality}
    Suppose $\cc X$ is a matricial order operator space.
    \begin{enumerate}
    \item
    $\cc X$ is $C$-matricial normal if and only if, for any $a, b \in S_\infty^n({\mathcal X})^+$, we have $C \alpha_n(a+b) \geq \alpha_n(a-b)$.
    \item 
    $\cc X$ is $C$-matricial generating if and only if, for any $\vr > 0$ and $x \in S_\infty^n({\mathcal X})_h$, there exist $a, b \in S_\infty^n({\mathcal X})^+$ so that $x=a-b$ and $\alpha_n(a+b) \leq C \alpha_n(x) + \vr$.
    \end{enumerate}
\end{lem}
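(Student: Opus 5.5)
The plan is to reduce both parts to \Cref{check-selfadj}, which already rephrases $C$-matricial normality and generation in terms of hermitian $u$ and positive $v$ with $v \geq \pm u$. The dictionary is the change of variables
$$
v = a+b, \quad u = a-b \qquad\Longleftrightarrow\qquad a = \tfrac{v+u}{2}, \quad b = \tfrac{v-u}{2}.
$$
For hermitian $u$ and hermitian $v$, the condition $v \geq \pm u$ says exactly that $v+u$ and $v-u$ lie in $S_\infty^n(\cc X)^+$, i.e.\ that $a,b \in S_\infty^n(\cc X)^+$. The only facts needed are that positive elements are hermitian (axiom (4) of \Cref{d:ordered properties}) and that $S_\infty^n(\cc X)^+$ is a wedge, so it is closed under sums and positive scalings.

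For (1), assume first that $\cc X$ is $C$-matricial normal. Take $a,b \in S_\infty^n(\cc X)^+$ and set $u=a-b$ (hermitian) and $v=a+b$ (positive). Then $v-u=2b \geq 0$ and $v+u = 2a \geq 0$, so $v \geq \pm u$. Part (1) of \Cref{check-selfadj} then gives $\alpha_n(a-b) \leq C\alpha_n(a+b)$.

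Conversely, suppose the inequality holds for all positive $a,b$. Given hermitian $u$ and positive $v$ with $v \geq \pm u$, put $a=(v+u)/2$ and $b=(v-u)/2$; both are positive. Applying the hypothesis gives $\alpha_n(u) \leq C\alpha_n(v)$, and \Cref{check-selfadj}(1) yields $C$-matricial normality.

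For (2), assume $C$-matricial generation and let $x$ be hermitian and $\vr>0$. By \Cref{check-selfadj}(2) there is a positive $v \geq \pm x$ with $\alpha_n(v) < C\alpha_n(x)+\vr$. Take $a = (v+x)/2$ and $b=(v-x)/2$. These are positive, $a-b = x$, and $a+b=v$, which gives the required bound.

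Conversely, suppose every hermitian $x$ splits as $a-b$ with $a,b$ positive and $\alpha_n(a+b) \leq C\alpha_n(x)+\vr$. Set $v=a+b$. Then $v$ is positive, $v-x = 2b \geq 0$ and $v+x=2a\geq 0$, so $v \geq \pm x$. Applying the hypothesis with $\vr/2$ in place of $\vr$ gives the strict inequality $\alpha_n(v) < C\alpha_n(x)+\vr$ that \Cref{check-selfadj}(2) requires, and that lemma then gives $C$-matricial generation.

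No step is a genuine obstacle; the substance was already handled in \Cref{check-selfadj}, namely the passage from the off-diagonal $2\times 2$ block formulation to the $v \geq \pm u$ formulation. The only point requiring care is the strict versus non-strict inequality with $\vr$ in part (2), which is settled by the $\vr/2$ adjustment above.
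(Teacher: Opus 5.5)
Your proposal is correct and matches the paper's proof: both reduce to \Cref{check-selfadj} and use the substitution $a=(v+u)/2$, $b=(v-u)/2$ (equivalently $v=a+b$, $u=a-b$). The paper writes out only part (1) and calls (2) similar; your explicit treatment of (2), including the $\vr/2$ adjustment to get the strict inequality that \Cref{check-selfadj}(2) uses, fills that in correctly.
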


\begin{proof}
We only prove (1), as (2) is similar. By \Cref{check-selfadj}, it suffices to check for normality on self-adjoint elements.

(i) Suppose the above formula holds. To show that $y \geq x \geq -y$ implies $C \alpha_n(y) \geq \alpha_n(x)$, apply it with $a = (y+x)/2, b = (y-x)/2$; then $y = a+b, x = a-b$.

(ii) Conversely, suppose $\cc X$ is $C$-matricial normal. For positive $a, b$, take $y = a+b, x = a-b$. Then $y \geq x \geq -y$, hence $C \alpha_n(a+b) \geq \alpha_n(a-b)$.
\end{proof}

We show that, for operators between normal and generating spaces, the c.b.~norm can be calculated on the positive cone.
More specifically, for an operator $u : \cc X \to \cc Y$ (where $\cc X, \cc Y$ are normed matricial order spaces), define $\|u\|_{cbc}$ (the \emph{c.b.~norm on the positive cone)} as $\sup \big\{ \|u_n x\| : x \in \ball(S_\infty^n(\cc X))^+ \big\}$. Clearly, $\|u\|_{cbc} \leq \|u\|_{cb}$. We show that the converse is also true if the spaces in question possess certain properties.

\begin{lem}\label{compute norm on cone}
    Suppose the matricial order operator spaces $(\cc X, \alpha_n)$ and $(\cc Y, \beta_n)$ are $C_1$-matricial generating and $C_2$-matricial normal, respectively. Then a completely positive $u \in B(\cc X: \cc Y)$ is completely bounded iff $\|u\|_{cbc} < \infty$; in this case, $\|u\|_{cbc} \leq \|u\|_{cb} \leq C_1 C_2 \|u\|_{cbc}$.
\end{lem}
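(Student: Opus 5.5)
The inequality $\|u\|_{cbc} \leq \|u\|_{cb}$ is immediate from the definitions, and it shows that complete boundedness forces $\|u\|_{cbc} < \infty$. So the real task is the upper bound $\|u\|_{cb} \leq C_1 C_2 \|u\|_{cbc}$. The idea is to dominate an arbitrary matrix $x \in S_\infty^n(\cc X)$ by a positive $2n \times 2n$ block matrix with controlled diagonal, push that through $u_{2n}$ using complete positivity, and then read off the norm of the image using matricial normality of $\cc Y$. This works directly on general (non-hermitian) $x$, so \Cref{check-selfadj} is not needed.

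Fix $n$, $x \in S_\infty^n(\cc X)$ with $\alpha_n(x) \leq 1$, and $\vr > 0$. By $C_1$-matricial generation (\Cref{d:special ordered properties}(2)), choose $x_1, x_2 \in S_\infty^n(\cc X)^+$ with
\[
X := \begin{pmatrix} x_1 & x \\ x^* & x_2 \end{pmatrix} \in S_\infty^{2n}(\cc X)^+, \qquad \alpha_n(x_1) \vee \alpha_n(x_2) \leq C_1 + \vr .
\]
Since $u$ is completely positive, $u_{2n}(X) \geq 0$. Because $u$ is involution-preserving, $u_n(x^*) = u_n(x)^*$, so
\[
u_{2n}(X) = \begin{pmatrix} u_n(x_1) & u_n(x) \\ u_n(x)^* & u_n(x_2) \end{pmatrix} \in S_\infty^{2n}(\cc Y)^+ .
\]

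By $C_2$-matricial normality of $\cc Y$,
\[
\beta_n(u_n(x)) \leq C_2 \, \beta_n(u_n(x_1)) \vee \beta_n(u_n(x_2)) .
\]
Each $x_i$ is positive with $\alpha_n(x_i) \leq C_1 + \vr$, so $x_i/(C_1+\vr) \in \ball(S_\infty^n(\cc X))^+$. Hence $\beta_n(u_n(x_i)) \leq (C_1+\vr)\|u\|_{cbc}$. Combining, $\beta_n(u_n(x)) \leq C_2(C_1+\vr)\|u\|_{cbc}$. Letting $\vr \to 0$ and taking the supremum over $n$ and $x$ gives $\|u\|_{cb} \leq C_1 C_2 \|u\|_{cbc}$. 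In particular, finiteness of $\|u\|_{cbc}$ implies complete boundedness, which proves the equivalence.

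The argument is short, and the only point needing care is the identification $u_n(x^*) = u_n(x)^*$. This holds because completely positive maps are involution-preserving by \Cref{bipositivity}. Alternatively, if $\cc X_h$ is generating, it follows from the remark after that definition. No other obstacle is expected.
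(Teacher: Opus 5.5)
Your proposal is correct and follows essentially the same route as the paper. You use matricial generation to make $x$ the off-diagonal corner of a positive block matrix whose diagonal has norm at most $C_1+\vr$, push it through $u_{2n}$ using complete positivity (with $u_n(x^*)=u_n(x)^*$), and finish with $C_2$-matricial normality of $\cc Y$. The only cosmetic difference is that the paper normalizes $\|u\|_{cbc}\le 1$ and $\alpha_n(x)<1$ to obtain strict inequalities, where you let $\vr\to 0$.
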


\begin{proof}
    The inequality $\|u\|_{cbc} \leq \|u\|_{cb}$ is straightforward. For the converse, we need to show that, if $\|u\|_{cbc} \leq 1$, and $x \in S_\infty^n(\cc X)$ satisfies $\alpha_n(x) <  1$, then $\beta_n(u_n x) < C_1 C_2$.

    As $\cc X$ is $C_1$-matricially generating, we can find $x_1, x_2 \in S_\infty^n(\cc X)^+$ so that $\alpha_n(x_1), \alpha_n(x_2) < C_1$, and $\begin{pmatrix}        x_1 & x  \\  x^* & x_2    \end{pmatrix} \in S_\infty^{2n}(\cc X)^+$.
    Then $\begin{pmatrix}        u_n x_1 & u_n x  \\  (u_n x)^* & u_n x_2    \end{pmatrix} \in S_\infty^{2n}(\cc Y)^+$ (recall that $u$ is completely positive, hence in particular $u_n x^* = (u_n x)^*$.
    Further, $\beta_n(u_n x_1), \beta_n(u_n x_2) < C_1$, hence, by the $C_2$-matricial normality of $\cc Y$, $\beta_n(u_n x) < C_1 C_2$.
\end{proof}

We briefly address subspaces and quotients of matricial order spaces.

It is easy to see that a subspace of a matricial order space inherits the matricial order. Generation may be lost (a subspace may have trivial intersection with the positve cone), but normality is preserved.

To define quotients, we follow \cite{humeniuk2023extension}. A subspace $J \subset \cc X$ (we assume $\cc X^+$ is a cone) is  called a \emph{kernel} if it is a kernel of a bounded completely positive map into a matricial order operator space $\cc Y$, where $\cc Y^+$ is a cone. The matricial norms on $S_\infty^n(\cc X/J)$ are defined in the usual manner.
As $J = J^*$, we can define involution on $\cc X/J$ via $(x+J)^* = x^*+J$. Let $S_\infty^n(\cc X/J)^+$ be the closure of $q_n (S_\infty^n(\cc X)^+)$, where $q : \cc X \to \cc X/J$ is the quotient map. By \cite[Section 3]{humeniuk2023extension}, if $S_\infty^n(\cc X)^+$ is a cone, then so is $S_\infty^n(\cc X/J)^+$.

\begin{prop}\label{quotient-generating}
    In the above notation, if $\cc X$ is $C$-matricial generating, then so is $\cc X/J$.
\end{prop}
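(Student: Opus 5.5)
We lift to $\cc X$, apply $C$-matricial generation there, and push the resulting positive block matrix back down through $q$. Write $\tilde\alpha_n$ for the quotient matricial norms on $S_\infty^n(\cc X/J)$: $\tilde\alpha_n(q_n x) = \inf\{\alpha_n(x + j) : j \in S_\infty^n(J)\}$. Since $q$ is completely positive and $S_\infty^n(\cc X/J)^+$ contains $q_n(S_\infty^n(\cc X)^+)$, positivity passes through $q_n$ directly. No closure argument is needed, because we produce genuine positive elements.

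Fix $\vr > 0$ and $w \in S_\infty^n(\cc X/J)$. First, choose a lift $u \in S_\infty^n(\cc X)$ with $q_n u = w$ and $\alpha_n(u) < \tilde\alpha_n(w) + \vr/(2C)$; this uses the definition of the quotient norm, together with $S_\infty^n(\cc X)/S_\infty^n(J) = S_\infty^n(\cc X/J)$ algebraically. Second, apply $C$-matricial generation of $\cc X$ to $u$ with $\vr/2$. This gives $u_1, u_2 \in S_\infty^n(\cc X)^+$ with
\[
\begin{pmatrix} u_1 & u \\ u^* & u_2 \end{pmatrix} \in S_\infty^{2n}(\cc X)^+
\]
and $\alpha_n(u_1) \vee \alpha_n(u_2) \leq C\alpha_n(u) + \vr/2 < C\tilde\alpha_n(w) + \vr$.

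Third, apply $q_{2n}$. Since $q_{2n}$ acts entrywise and commutes with the involution, the image is $\begin{pmatrix} q_n u_1 & w \\ w^* & q_n u_2 \end{pmatrix}$. Here $(q_n u)^* = q_n(u^*)$ because $J = J^*$ and the involution on $\cc X/J$ is $(x+J)^* = x^*+J$. This matrix lies in $q_{2n}(S_\infty^{2n}(\cc X)^+) \subset S_\infty^{2n}(\cc X/J)^+$, and likewise $q_n u_i \in S_\infty^n(\cc X/J)^+$. Finally, $q_n$ is a complete contraction, so $\tilde\alpha_n(q_n u_i) \leq \alpha_n(u_i) < C\tilde\alpha_n(w) + \vr$. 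This is exactly $C$-matricial generation of $\cc X/J$.

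The only point needing care is the bookkeeping identification $q_{2n} = I_{S_\infty^2}\otimes q_n$ on block matrices, compatible with the involution, and the use of $J = J^*$. This is routine, so I expect no real obstacle. If the quotient matricial norms were defined via $S_\infty^n(\cc X)/S_\infty^n(J)$ only up to closure, the same $\vr$-approximate lifting still works.
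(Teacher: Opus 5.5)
Your proof is correct and follows the paper's strategy: lift to $\cc X$ with a near-optimal norm, apply generation there, and push the result down through the completely positive, completely contractive quotient map. The only difference is that the paper uses the self-adjoint reformulation of generation from \Cref{check-selfadj}(2), which forces it to symmetrize the lift as $x_0 = (x+x^*)/2$. You instead transport the $2 \times 2$ block witness through $q_{2n}$ directly, which needs neither the symmetrization nor that lemma.
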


\begin{proof}
It suffices to show that, if $u \in S_\infty^n(\cc X/J)_h$ satisfies $\|u\| < 1$, then there exists $v \in S_\infty^n(\cc X/J)_h$ so that $\|v\| \leq C$, and $\pm u \leq v$.

We can represent $u = x + S_\infty^n(J)$, with $x \in S_\infty^n(\cc X), \|x\| < 1$. As the involution preserves $J$, we also have $u = x^* + S_\infty^n(J)$, hence $u = x_0 + S_\infty^n(J)$, where $x_0 = (x+x^*)/2$ is self-adjoint, and $\|x_0\| < 1$. Then there exists $y_0 \in S_\infty^n(\cc X)^+$ so that $\pm x_0 \leq y_0$ and $\|y_0\| < C$. Clearly, $v = y_0 + J$ works for us.
\end{proof}

\begin{rmk}
For ordered Banach spaces, normality constants aren't preserved by taking quotients. Let $\cc X = \R^3$ with the $\ell_2$-norm, and with $\cc X^+ = \{(x,y,z) : z \geq |x|, y \geq 0\}$. One can show that $\cc X$ is $1$-normal. 
Now let $J$ be the $z$-axis, and let $q : \cc X \to \cc X/J$ be the quotient map. Then $\cc X/J$ is $\ell_2^2$, and $(\cc X/J)^+$ is the wedge $\{(x,y) : y \geq 0\}$; it is not even a cone, so no normality is possible.
\end{rmk}

Finally, we mention that \cite{debruyn2025archimedean} develops a theory of unitizations of normal matricial order spaces (with a slightly different definition of the normality constant).

\section{Positivisation in matricial order spaces}\label{positivisation}

Now suppose a matricial order space $\cc X$ is equipped with a family of semi-norms $\alpha = (\alpha_n)$. Further, assume that the matricial order on $\cc X$ is \emph{majorizing}: given any $x \in S_\infty^n(\cc X)$, there exists $x_i \in S_\infty^n(\cc X)^+$ such that $\begin{pmatrix} x_1 & x \\ x^* & x_2 \end{pmatrix} \in S_\infty^{2n}(\cc X)^+$. 
Inspired by a construction from \cite{davies1968predual}, define a new sequence of semi-norms, consistent with the order: for $x \in S_\infty^n(\cc X)$, let
\begin{equation}
\alpha^+_n(x) := \inf \Big\{ \alpha_n(x_1) \vee \alpha_n(x_2) : \begin{pmatrix}  x_ 1 & x  \\  x^* & x_2  \end{pmatrix}  \in S_\infty^{2n}(\cc X)^+ \Big\} .    
\label{eq:alpha+}
\end{equation}
Note that, for every $n$, $\alpha_n^+$ is a semi-norm, and, for $x \in S_\infty^n(\cc X)^+$, we necessarily have $\alpha_n^+(x) \leq \alpha_n(x)$. 
If $(\alpha_n)$ turns $\cc X$ into a matricial generating, matricial normal space, then $\alpha_n^+ = \alpha_n$. Also, reasoning as before, one can show that, for $x \in S_\infty^n(\cc X)_h$, $\alpha_n^+(x) = \inf \big\{ \alpha_n(y) : y \geq \pm x \big\}$.

The most essential properties of $(\alpha_n^+)$ are given below.

\begin{prop}\label{improve norm}
    $(1)$    
    $(\alpha_n^+)$ satisfy Ruan's axioms.

    $(2)$     The family of semi-norms $(\alpha_n^+)$ on $\cc X$ is $1$-matricial normal and $1$-matricial generating.

    $(3)$     For every $n$, $\alpha_n^+ = (\alpha^+)_n^+$.
\end{prop}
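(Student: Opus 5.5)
The plan is to prove the three parts in order, working directly from \eqref{eq:alpha+}. Two devices do most of the work: compressing positive block matrices by scalar matrices, using axiom (2) of \Cref{d:ordered properties}, and the characterization of $\alpha^+_n$ on hermitian elements, $\alpha_n^+(x)=\inf\{\alpha_n(y): y\geq \pm x\}$, combined with \Cref{check-selfadj}.

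For (1), I begin with the first Ruan axiom. Let $x\in S_\infty^n(\cc X)$, $a,b\in S_\infty^{m,n}$, and suppose $\begin{pmatrix} x_1 & x\\ x^* & x_2\end{pmatrix}\geq 0$. After scaling I may assume $\|a\|=\|b\|$; otherwise replace $a,b$ by $ta, t^{-1}b$, which leaves $a x b^*$ unchanged. Compressing by $a\oplus b$ gives
\[
\begin{pmatrix} a x_1 a^* & a x b^*\\ b x^* a^* & b x_2 b^*\end{pmatrix}\geq 0 .
\]
Its diagonal entries have $\alpha_m$-norm at most $\|a\|\|b\|\max_i\alpha_n(x_i)$ by Ruan's axiom for $\alpha$. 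This yields $\alpha_m^+(axb^*)\leq\|a\|\|b\|\alpha_n^+(x)$.

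For the direct sum axiom, the inequality $\leq$ follows from the permuted direct sum of two witnesses, using axiom (3) of \Cref{d:ordered properties} together with a permutation compression. The inequality $\geq$ follows from the first axiom, by compressing to the corners with the coordinate projections. Finally, each $\alpha_n^+$ is a seminorm. Positive homogeneity is clear, and for sums one adds witnesses. Finiteness of $\alpha_n^+$ is exactly the majorizing hypothesis.

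For (2), I treat generation first, and it is nearly tautological. Given $u$ and $\vr>0$, choose a witness $\begin{pmatrix} x_1&u\\u^*&x_2\end{pmatrix}\geq 0$ with $\alpha_n(x_i)<\alpha_n^+(u)+\vr$. Since $\alpha_n^+\leq\alpha_n$ on positive elements, we get $\alpha_n^+(x_i)<\alpha_n^+(u)+\vr$.

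Normality is where I expect the main obstacle. By \Cref{check-selfadj}(1), it suffices to show that for hermitian $u$ and $v\geq\pm u$ we have $\alpha_n^+(u)\leq\alpha_n^+(v)$. Take any $w\geq\pm v$. Then $w\geq v\geq u$, and $w\geq v\geq -u$ since $v+u\geq 0$. So $w\geq\pm u$, and therefore $\alpha_n^+(u)\leq\alpha_n(w)$. Taking the infimum over such $w$ gives $\alpha_n^+(u)\leq\alpha_n^+(v)$, using the hermitian formula for $v$.

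The delicate point is to verify the hermitian formula itself carefully. One direction is \Cref{about selfadjointness}: from $y\geq\pm x$ we get $\begin{pmatrix} y&x\\x&y\end{pmatrix}\geq0$. The other direction averages: from a witness with diagonal $(x_1,x_2)$ we obtain $y=(x_1+x_2)/2\geq\pm x$ with $\alpha_n(y)\leq\max_i\alpha_n(x_i)$. Note that \Cref{check-selfadj} is stated for seminorm families satisfying Ruan's axioms, which part (1) now guarantees for $\alpha^+$.

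For (3), I prove the two inequalities separately. First, $(\alpha^+)^+_n\leq\alpha^+_n$: given a witness for $\alpha^+_n(x)$ with diagonal $x_1,x_2$, we have $\alpha^+_n(x_i)\leq\alpha_n(x_i)$. For the reverse inequality, take a witness $\begin{pmatrix} x_1&x\\x^*&x_2\end{pmatrix}\geq0$ with $\alpha_n^+(x_i)<t$. By (2), $\alpha^+$ is $1$-matricial normal. Then \Cref{check-selfadj}(1), in block form from the proof of \Cref{check-selfadj}, shows that the positive matrix forces $\alpha_n^+(x)\leq\max_i\alpha_n^+(x_i)<t$. Hence $\alpha_n^+(x)\leq(\alpha^+)^+_n(x)$.
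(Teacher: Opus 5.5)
Your proposal is correct and follows the paper's proof for (1) and for the normality half of (2). Where the paper only says ``similar'', you also check the ``$\geq$'' half of the direct-sum axiom, and you prove generation directly from $\alpha_n^+\le\alpha_n$ on positive elements. The one real difference is the reverse inequality in (3): you deduce $\alpha_n^+(x)\le\max_i\alpha_n^+(x_i)$ from the $1$-matricial normality of $\alpha^+$ proved in (2), whereas the paper argues directly by enlarging each $x_i$ to some $y_i\ge x_i$ with $\alpha_n(y_i)<1$ and noting that $\begin{pmatrix} y_1 & x\\ x^* & y_2\end{pmatrix}\ge 0$. Both routes are valid; yours makes explicit that (3) is a formal consequence of (2).
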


\begin{proof}
(1) We need to show that 
\begin{enumerate}[(i)]
\item 
For any $x \in S_\infty^n(\cc X)$ and $a, b \in S_\infty^{n,m}$, we have $\alpha_m^+(a^* \cdot x \cdot b) \leq \|a\| \alpha_n^+(x) \|b\|$.
\item 
For any $x \in S_\infty^n(\cc X)$ and $y \in S_\infty^m(\cc X)$, $\alpha_{n+m}^+(x \oplus y) = \alpha_n^+(x) \vee \alpha_m^+(y)$; here $x \oplus y = \begin{pmatrix} x & 0 \\ 0 & y \end{pmatrix}$.
\end{enumerate}

For (i), we can assume (by replacing $a$ and $b$ by $ta$ and $t^{-1} b$ if necessary) that $\|a\|=\|b\|$. Fix $\vr > 0$, and find $x_1, x_2$ so that $\begin{pmatrix} x_1 & x \\ x^* & x_2 \end{pmatrix} \in S_\infty^{2n}(\cc X)^+$, and $\alpha_n(x_1), \alpha_n(x_2) < \alpha_n^+(x) + \vr$. Then 
$$
\begin{pmatrix} a^* x_1 a & a^* x b \\ (a^* x b)^* & b^* x_2 b \end{pmatrix} =
\begin{pmatrix} a & 0 \\ 0 & b \end{pmatrix}^*
\begin{pmatrix} x_1 & x \\ x^* & x_2 \end{pmatrix}
\begin{pmatrix} a & 0 \\ 0 & b \end{pmatrix} \in S_\infty^{2}(\cc X)^+ ,
$$
By Ruan's axioms, $\alpha_m(a^* \cdot x_1 \cdot b) \leq \|a\|^2 \alpha_n(x_1)$, and $\alpha_m(b^* \cdot x_2 \cdot b) \leq \|b\|^2 \alpha_n(x_2)$; recalling $\|a\| = \|b\|$, we obtain $\alpha_m^+(a^* \cdot x \cdot b) \leq \|a\| \|b\| \alpha_n^+(x)$.

To establish (ii), fix $\vr > 0$, and find $x_1, x_2, y_1, y_2$ so that
$$
\begin{pmatrix} x_1 & x \\ x^* & x_2 \end{pmatrix} \in S_\infty^{2n}(\cc X)^+, \begin{pmatrix} y_1 & y \\ y^* & y_2 \end{pmatrix} \in S_\infty^{2m}(\cc X)^+ , \alpha_n(x_i) < \alpha_n^+(x) + \vr, \alpha_m(y_i) < \alpha_m^+(y) + \vr (i=1,2) .
$$
Then
$$
\begin{pmatrix} x_1 & 0 & x & 0 \\ 0 & y_1 & 0 & y \\ x^* & 0 & x_2 & 0 \\ 0 & y^* & 0 & y_2 \end{pmatrix} = \begin{pmatrix} x_1 & x \\ x^* & x_2 \end{pmatrix} \otimes \begin{pmatrix} 1 & 0 \\ 0 & 0 \end{pmatrix} + \begin{pmatrix} y_1 & y \\ y^* & y_2 \end{pmatrix} \otimes \begin{pmatrix} 0 & 0 \\ 0 & 1 \end{pmatrix} \in S_\infty^{2(n+m)}(\cc X)^+ ,
$$
hence
$$
\alpha_{n+m}^+ \Big( \begin{pmatrix} x & 0 \\ 0 & y \end{pmatrix} \Big) \leq \vee_{i=1,2} \alpha_{n+m} \Big( \begin{pmatrix} x_i & 0 \\ 0 & y_i \end{pmatrix} \Big) < \alpha_n^+(x) \vee \alpha_m^+(y) + \vr .
$$

(2) We establish matricial normality, since generation is similar. In light of \Cref{check-selfadj}, it suffices to show that, for every $x \in S_\infty^n(\cc X)_h$ and $y \in S_\infty^n(\cc X)^+$, with $y \geq \pm x$ we have $\alpha_n^+(x) \leq \alpha_n^+(y)$.
By the preceding reasoning, $\alpha_n^+(y) = \inf\{ \alpha_n(z) : z \geq y\}$. But any $z$ as above satisfies $\pm x \leq z$, hence $\alpha_n^+(x) \leq \inf\{ \alpha_n(z) : z \geq y\} = \alpha_n^+(y)$.

(3) For a $x \in S_\infty^n(\cc X)$,
$$
\alpha^+_n(x) = \inf \Big\{ \alpha_n(x_1) \vee \alpha_n(x_2) : \begin{pmatrix}  x_ 1 & x  \\  x^* & x_2  \end{pmatrix}  \in S_\infty^{2n}(\cc X)^+ \Big\} .
$$
Clearly $x_1, x_2 \in S_\infty^n(\cc X)^+$, hence $\alpha_n^+(x_i) \leq \alpha_n(x_i)$, for $i = 1,2$. Therefore,
$$
(\alpha^+)^+_n(x) = \inf \Big\{ \alpha_n^+(x_1) \vee \alpha^+_n(x_2) : \begin{pmatrix}  x_1 & x  \\  x^* & x_2  \end{pmatrix}  \in S_\infty^{2n}(\cc X)^+ \Big\} \leq \alpha^+_n(x) .
$$

Conversely, suppose $(\alpha^+)^+_n(x) < 1$. Find $x_1, x_2$ so that $\vee_i \alpha_n^+(x_i) < 1$, and $\begin{pmatrix}  x_1 & x  \\  x^* & x_2  \end{pmatrix}  \in S_\infty^{2n}(\cc X)^+$. For $i=1,2$ find $y_i \geq x_i$ so that $\alpha_n(y_i) < 1$. Then
$$
\begin{pmatrix}  y_1 & x  \\  x^* & y_2  \end{pmatrix} = \begin{pmatrix}  x_1 & x  \\  x^* & x_2  \end{pmatrix} + \begin{pmatrix}  y_1-x_1 & 0  \\  0 & y_2-x_2  \end{pmatrix} \in S_\infty^{2n}(\cc X)^+ ,
$$  
hence $\alpha_n^+(x) \leq \vee_i \alpha_n(y_i) < 1$.
\end{proof}



From the above, it is easy to conclude that generating and normal spaces can be renormed to be $1$-generating and $1$-normal.

\begin{prop}\label{p:renorm}
    For a matricial order operator space $({\cc X}, (\alpha_n))$, define $\alpha_n^+$ as in \eqref{eq:alpha+}. Then: 
    \begin{enumerate}
        \item If $(\alpha_n)$ is $C_g$-matricial generating, then $\alpha_n^+ \leq C_g \alpha_n$.
        \item If $(\alpha_n)$ is $C_n$-matricial normal, then $\alpha_n^+ \geq \alpha_n/C_n$.
    \end{enumerate}
\end{prop}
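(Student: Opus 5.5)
Both parts follow directly from the definition \eqref{eq:alpha+} of $\alpha_n^+$ as an infimum over ``dominating'' $2\times 2$ positive block matrices. For the upper bound we exhibit good dominating blocks; for the lower bound we use normality to control every admissible block. Throughout, fix $n$ and $x \in S_\infty^n(\cc X)$.

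For (1), assume $(\alpha_n)$ is $C_g$-matricial generating and fix $\vr > 0$. By \Cref{d:special ordered properties}(2) there are $x_1, x_2 \in S_\infty^n(\cc X)^+$ with
\[
\begin{pmatrix} x_1 & x \\ x^* & x_2 \end{pmatrix} \in S_\infty^{2n}(\cc X)^+
\quad\text{and}\quad
\alpha_n(x_1) \vee \alpha_n(x_2) \leq C_g \alpha_n(x) + \vr .
\]
In particular the order is majorizing, so $\alpha_n^+$ is defined. The pair $(x_1,x_2)$ is admissible in the infimum defining $\alpha_n^+(x)$, hence $\alpha_n^+(x) \leq C_g\alpha_n(x) + \vr$. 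Letting $\vr \to 0$ gives $\alpha_n^+(x) \leq C_g \alpha_n(x)$.

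For (2), assume $(\alpha_n)$ is $C_n$-matricial normal (here $C_n$ denotes the normality constant, not an index). Take any $x_1, x_2$ with $\begin{pmatrix} x_1 & x \\ x^* & x_2 \end{pmatrix} \in S_\infty^{2n}(\cc X)^+$. By \Cref{d:special ordered properties}(1), applied with $u = x$, $u_1 = x_1$, $u_2 = x_2$, we get $\alpha_n(x) \leq C_n\, \alpha_n(x_1)\vee\alpha_n(x_2)$. Taking the infimum over all admissible pairs yields $\alpha_n(x) \leq C_n \alpha_n^+(x)$, that is, $\alpha_n^+(x) \geq \alpha_n(x)/C_n$.

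If no admissible pair exists (no majorization), the infimum is $+\infty$ and the inequality in (2) holds trivially. There is no genuine obstacle here; the only point to check is that the definitions match the infimum exactly, which they do. Together with \Cref{improve norm}, this shows that $\alpha^+$ is an equivalent, $1$-normal and $1$-generating renorming when both hypotheses hold.
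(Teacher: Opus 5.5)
Your proof is correct: (1) holds because the blocks supplied by $C_g$-matricial generation are admissible in the infimum \eqref{eq:alpha+}, and (2) follows by taking the infimum of the $C_n$-matricial normality inequality over all admissible blocks. The paper gives no written proof and only remarks that the result is easy to conclude; your direct unwinding of the definitions is exactly that argument.
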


\section{Duality of matricial order spaces}\label{ss:duality}

Suppose $\cc X$ is a matricial order space, and $\cc X^\flat$ is its topological vector space dual (for instance, $\cc X^\flat$ can be the space of norm continuous functionals on $\cc X$, or the space of weak$^*$-continuous functionals, that is, the pre-dual of $\cc X$).
Define the $*$-operation on $S_\infty^n(\cc X^\flat)$ by taking, for each $x^\flat \in S_\infty^n(\cc X^\flat)$, $x^{\flat*}$ to be the unique element of $S_\infty^n(\cc X^\flat)$ so that the equality $\mp{x^{\flat*}}{x} := \mp{x^\flat}{x^*}^*$ holds for any $x \in S_\infty^m(\cc X)$.
Equivalently, $\langle u^{\flat*}, u \rangle = \overline{\langle u^{\flat}, u^* \rangle}$ for any $u^\flat \in \cc X^\flat, u \in \cc X$, and $(a \otimes u^\flat)^* = a^* \otimes u^{\flat*}$.

 Similarly, define the positive wedge: a self-adjoint $x^\flat$ belongs to $S_\infty^n(\cc X^\flat)^+$ iff $\mp{x}{x^\flat} \in S_\infty^{mn+}$ for any $x \in S_\infty^m(\cc X)^+$. By \cite{effros1997matrix}, it suffices to verify this condition for $m=n$. Clearly all conditions of \Cref{d:ordered properties} are satisfied.

\begin{rmk}\label{rem:when cone dual}
\Cref{lem:when cone} shows that $S_\infty^n(\cc X^\flat)^+$ is a cone for every $n$ iff $\cc X^{\flat+}$ is a cone.
For different approach to proving this, recall \cite[Theorem 2.13]{Aliprantis-Tourky} that $\cc X^{\flat+}$ is a cone iff $\cc X^+ - \cc X^+$ is weakly dense in $\cc X_h$. Assuming this density, one can show that $S_\infty^n(\cc X)^+ - S_\infty^n(\cc X)^+$ is also weakly dense in $S_\infty^n(\cc X)_h$.
Using \Cref{s:order Banach}, we conclude that, if $\cc X_h$ is matricial generating, then $S_\infty^n(\cc X^\flat)^+$ is a cone, for every $n$.
\end{rmk}

\begin{rmk}\label{different positivity}
\cite[Lemma 2.3.8]{Schreiner-DISS} (see also \cite{Itoh}) gives several alternative descriptions of positivity in $S_\infty^n(\cc X^\flat)$. In particular, if $\cc X$ is a matricial order space, then $x^\flat = (x_{ij}^\flat)_{i,j=1}^n \in S_\infty^n(\cc X^\flat)$ is positive if an only if the corresponding linear map $S_\infty^n(\cc X) \to \C : (x_{ij})_{i,j=1}^n \mapsto \sum_{i,j} \langle x_{ij}^\flat, x_{ij} \rangle$ is positive.
\end{rmk}

If, in the above situation, $(\cc X, \alpha_n)$ and $(\cc X^\flat, \alpha_n^\flat)$ are matricial order operator spaces, then we shall assume that the matricial norms on the two spaces are consistent with the duality -- that is,
\begin{enumerate}
\item For any $x \in S_\infty^n(\cc X)$, $\alpha_n(x) = \sup \big\{ \| \mp{x^\flat}{x} \| : \alpha_n^\flat(x^\flat) \leq 1 \big\}$.
\item For any $x^\flat \in S_\infty^n(\cc X^\flat)$, $\alpha_n^\flat(x) = \sup \big\{ \| \mp{x^\flat}{x} \| : \alpha_n(x) \leq 1 \big\}$.
\end{enumerate}
This is the case, for instance, when $(\cc X^\flat, \alpha_n^\flat)$ is the canonical operator space dual, or pre-dual, of $(\cc X, \alpha_n)$.

\begin{prop}
If $\cc X$ is a matricial order operator space, then the canonical embedding into $\cc X \to \cc X^{\flat\flat}$ is completely isometric and completely bipositive.
\end{prop}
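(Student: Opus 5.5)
The statement has two halves, and I will handle them separately. Complete isometry is the standard operator space fact; complete bipositivity is a matricial bipolar-type statement, which I will get from the scalar Hahn–Banach/bipolar theorem applied at each matrix level. Write $\iota : \cc X \to \cc X^{\flat\flat}$ for the canonical embedding, and $\iota_n = I_{S_\infty^n} \otimes \iota$.

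\emph{Complete isometry.} Here the matricial norms on $\cc X^\flat$ and $\cc X^{\flat\flat}$ are the dual ones, as in the consistency conditions above. For $x \in S_\infty^n(\cc X)$, the norm of $\iota_n(x)$ in $S_\infty^n(\cc X^{\flat\flat})$ is
\[
\sup \big\{ \|\mp{x^\flat}{x}\| : \alpha_m^\flat(x^\flat) \leq 1 \big\},
\]
with the order of the pairing transposed, which does not change the norm of a scalar matrix. By consistency condition (1), this supremum equals $\alpha_n(x)$. This is the usual fact that $\cc X \hookrightarrow \cc X^{\flat\flat}$ is a complete isometry (Blecher's theorem, or the Effros–Ruan duality of \cite{effros1997matrix}). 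I will cite it rather than reprove it. If needed, it follows from the separation theorem \Cref{separation} applied to the matricial unit ball, which is matricial convex and weakly closed.

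\emph{Complete positivity of $\iota$.} Let $x \in S_\infty^n(\cc X)^+$. For any $x^\flat \in S_\infty^m(\cc X^\flat)^+$, the definition of the dual wedge gives $\mp{x^\flat}{x} \geq 0$. Hence $\mp{\iota_n(x)}{x^\flat}$, which is unitarily equivalent to it (a permutation of indices), is also positive. Since $x$ is hermitian, $\iota_n(x)$ is hermitian too. Therefore $\iota_n(x) \in S_\infty^n(\cc X^{\flat\flat})^+$.

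\emph{Converse, which is the main step.} Suppose $x \in S_\infty^n(\cc X)$ and $\iota_n(x) \in S_\infty^n(\cc X^{\flat\flat})^+$; I must show $x \in S_\infty^n(\cc X)^+$. The approach is to reduce to the scalar case by pairing with scalar functionals on $S_\infty^n(\cc X)$, using \Cref{different positivity}.
\begin{enumerate}
\item First, $x$ is hermitian. Indeed, $\iota_n(x)$ is hermitian, $\iota_n$ commutes with the involution, and $\iota_n$ is injective.
\item By \Cref{different positivity}, elements $y^\flat \in S_\infty^n(\cc X^\flat)^+$ are exactly those whose associated functional $\phi_{y^\flat}(z) = \sum_{i,j} \langle y^\flat_{ij}, z_{ij}\rangle$ is positive on $S_\infty^n(\cc X)$. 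Every continuous functional on $S_\infty^n(\cc X)$ has this form.
\item Positivity of $\iota_n(x)$, tested against $y^\flat \in S_\infty^n(\cc X^\flat)^+$ with $m = n$, gives $\mp{y^\flat}{x} \geq 0$. Compressing by the vector $\sum_i |i\rangle \otimes |i\rangle$ extracts $\phi_{y^\flat}(x) \geq 0$. So $x$ is nonnegative on every positive continuous functional on $S_\infty^n(\cc X)$.
\item The set $S_\infty^n(\cc X)^+$ is a closed wedge in the real space $S_\infty^n(\cc X)_h$. Suppose $x \notin S_\infty^n(\cc X)^+$. The scalar bipolar theorem (Hahn–Banach, \cite[Theorem 2.13]{Aliprantis-Tourky}) yields a real continuous functional on $S_\infty^n(\cc X)_h$ that is nonnegative on the wedge and negative at $x$. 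I extend it to a hermitian complex functional on $S_\infty^n(\cc X)$ via $\phi(z) = \psi(\frac{z+z^*}{2}) + \iota\, \psi(\frac{z-z^*}{2\iota})$. This extension is positive, contradicting step 3.
\end{enumerate}

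I expect the main obstacle to be step 2 combined with step 4. The matricial dual positivity is defined through all matrix pairings, and I must check carefully that the scalar separating functional really corresponds to an element of $S_\infty^n(\cc X^\flat)^+$. This is exactly what \Cref{different positivity} (Schreiner's lemma) supplies, and that is where I would lean on it. Alternatively, I could use \Cref{separation} applied to the matricial convex set $\big(-S_\infty^k(\cc X)^+\big)_k$, whose polar consists of positive elements up to scaling.
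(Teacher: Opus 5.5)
Your proposal is correct and follows the paper's route. The paper cites the complete isometry as a standard operator space fact, and derives complete bipositivity from the scalar-functional description of $S_\infty^n(\cc X^\flat)^+$ in \Cref{different positivity} combined with scalar Hahn--Banach separation of the closed wedge $S_\infty^n(\cc X)^+$; the paper's sketch leaves that separation implicit, and your hermitian extension of the separating functional and compression by $\sum_i |i\rangle \otimes |i\rangle$ fill it in correctly.
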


\begin{proof}[Sketch of a proof]
The canonical embedding is known to be completely isometric, see e.g.~\cite[Section 3]{effros-ruan-book}. The complete bipositivity follows from \Cref{different positivity}.
\end{proof}

For future use, we need a version of polarity. Suppose $K = (K_n)$ is a \emph{self-adjoint} (or \emph{hermitian}) \emph{matricial convex set} on $\cc X$ -- that is, $(K_n)$ is matricial convex in the sense of \Cref{matricial duality}, and $K_n \subset S_\infty^n(\cc X)_h$, for any $n$. Define the \emph{self-adjoint matricial polar} of $K$, denoted by $K^o = (K_n^o)$: 
for any $n$, $K_n^o = K_n^\pi \cap S_\infty^n(\cc X^\flat)_h$, or in other words, $K_n^o$ consists of those $x^\flat \in S_\infty^n(\cc X^\flat)_h$ for which $\mp{x^\flat}{x} \leq I_{mn}$ holds for any $x \in K_m$.

The results of \Cref{matricial duality} then yield:

\begin{prop}\label{hermitian dimension}
If $K = (K_n)$ is hermitian matricial convex on $\cc X$, then $x^\flat \in S_\infty^n(\cc X^\flat)_h$ belongs to $K_n^o$ iff $\mp{x^\flat}{x} \leq I_{n^2}$ for any $x \in K_n$.
\end{prop}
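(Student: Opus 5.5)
The plan is to deduce the statement from \Cref{control dimension} by relating the self-adjoint polar to the full matricial polar. One direction is trivial: if $x^\flat \in K_n^o$, then $\mp{x^\flat}{x} \leq I_{mn}$ for every $m$ and every $x \in K_m$, in particular for $m = n$.

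For the converse, fix a self-adjoint $x^\flat \in S_\infty^n(\cc X^\flat)_h$ with $\mp{x^\flat}{x} \leq I_{n^2}$ for every $x \in K_n$. The first step is to check that, whenever both $x^\flat$ and $x$ are self-adjoint, the scalar matrix $\mp{x^\flat}{x}$ is Hermitian. Using the definition of the involution on $\cc X^\flat$, namely $\mp{x^{\flat*}}{x} = \mp{x^\flat}{x^*}^*$, and substituting $x^{\flat*} = x^\flat$ and $x^* = x$, we get $\mp{x^\flat}{x} = \mp{x^\flat}{x}^*$. Hence $\Re \mp{x^\flat}{x} = \mp{x^\flat}{x}$ for all $x \in K_m \subset S_\infty^m(\cc X)_h$. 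This is the only point requiring care: one should verify the index bookkeeping in $\mp{\cdot}{\cdot}$, with rows indexed by $(i,k)$ and columns by $(j,\ell)$, against the definition $x^{\flat*}_{k\ell} = (x^\flat_{\ell k})^*$. I expect this to be routine.

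With this observation, the hypothesis says exactly that $\Re\mp{x^\flat}{x} \leq I_{n^2}$ for all $x \in K_n$. Since $K$ is matricial convex in the sense of \Cref{matricial duality}, \Cref{control dimension} gives $x^\flat \in K_n^\pi$, so $\Re \mp{x^\flat}{x} \leq I_{mn}$ for every $m$ and every $x \in K_m$. Applying the Hermitian observation once more, this is $\mp{x^\flat}{x} \leq I_{mn}$. Therefore $x^\flat \in K_n^\pi \cap S_\infty^n(\cc X^\flat)_h = K_n^o$, which completes the proof.
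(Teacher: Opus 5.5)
Your proof is correct and is essentially the paper's route: the paper gives no written proof and simply derives the statement from \Cref{control dimension}. Your observation that $\mp{x^\flat}{x}$ is Hermitian whenever $x^\flat$ and $x$ are self-adjoint follows directly from the defining identity $\mp{x^{\flat*}}{x} = \mp{x^\flat}{x^*}^*$. It is exactly the step the paper leaves implicit when it identifies $K_n^\pi \cap S_\infty^n(\cc X^\flat)_h$ with the set of self-adjoint $x^\flat$ satisfying $\mp{x^\flat}{x} \leq I_{mn}$ for all $x \in K_m$.
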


\begin{thm}\label{hermitian separation}
    Suppose $K = (K_n)$ is hermitian $\sigma(\cc X, \cc X^\flat)$-closed matricial convex subset of $\cc X$, so that $0 \in K_1$, and $x \in S_\infty^n(\cc X)_h$ doesn't belong to $K_n$. Then there exists $x^\flat \in K_n^o$ so that $\mp{x^\flat}{x} \not\leq I_{n^2}$.
\end{thm}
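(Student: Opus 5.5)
Derive \Cref{hermitian separation} from the non-self-adjoint separation theorem (\Cref{separation}) by symmetrizing the separating functional.

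\textbf{Step 1 (enlarge $K$).} Let $L = (L_m)$ with
$$
L_m = \{ y \in S_\infty^m(\cc X) : \tfrac{y+y^*}{2} \in K_m \} = K_m + i\,S_\infty^m(\cc X)_h .
$$
$L$ is matricial convex: direct sums commute with taking real parts, and $\tfrac{(\alpha^* y \alpha) + (\alpha^* y \alpha)^*}{2} = \alpha^* \tfrac{y+y^*}{2} \alpha$. It is weakly closed, since $y \mapsto (y+y^*)/2$ is weakly continuous; this uses that the involution is continuous and, via the formula defining $x^{\flat*}$, compatible with the pairing. Also $0 \in L_1$, and $x \notin L_n$ because $x$ is self-adjoint and not in $K_n$.

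\textbf{Step 2 (separate).} By \Cref{separation}, there is $x^\flat \in L_n^\pi$ with $\Re \mp{x^\flat}{x} \not\leq I_{n^2}$.

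\textbf{Step 3 (symmetrize).} Set $z^\flat = (x^\flat + x^{\flat*})/2 \in S_\infty^n(\cc X^\flat)_h$. For self-adjoint $y$, the defining identity $\mp{x^{\flat*}}{y} = \mp{x^\flat}{y^*}^*$ gives
$$
\mp{z^\flat}{y} = \tfrac12\big(\mp{x^\flat}{y} + \mp{x^\flat}{y}^*\big) = \Re \mp{x^\flat}{y} .
$$
Two consequences follow.
\begin{itemize}
\item For $y \in K_m \subset L_m$ we get $\mp{z^\flat}{y} = \Re\mp{x^\flat}{y} \leq I_{mn}$, so $z^\flat \in K_n^o$. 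One can also check this only at level $n$ and invoke \Cref{hermitian dimension}.
\item Since $x$ is self-adjoint, $\mp{z^\flat}{x} = \Re\mp{x^\flat}{x} \not\leq I_{n^2}$.
\end{itemize}
So $z^\flat$ is the required functional.

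\textbf{Main obstacle.} The delicate point is Step 3's bookkeeping: verifying that $\mp{x^{\flat*}}{y} = \mp{x^\flat}{y}^*$ holds for self-adjoint $y$ with the index conventions of the $mn \times mn$ matrix, and that this adjoint identity interacts correctly with the pairing.

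A shortcut is to apply \Cref{separation} to $K$ itself, viewed as a (non-hermitian) matricial convex set. Then the enlargement of Step 1 is only needed to ensure weak closedness, which already holds for $K$. Either route works; the symmetrization is the substantive step.
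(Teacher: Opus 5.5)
Your proof is correct and follows the derivation the paper leaves implicit when it says the result follows from the results of \Cref{matricial duality}: you separate using \Cref{separation}, then replace $x^\flat$ by its hermitian part $(x^\flat + x^{\flat*})/2$, whose pairing with any self-adjoint $y$ equals $\Re\mp{x^\flat}{y}$ by the defining identity $\mp{x^{\flat*}}{y} = \mp{x^\flat}{y^*}^*$. The enlargement to $L$ is harmless but unnecessary, as your shortcut of separating $x$ from $K$ directly already shows; in fact every element of $L_n^\pi$ is automatically self-adjoint, so with $L$ the symmetrization step changes nothing.
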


Next we establish a self-adjoint matricial version of the polarity between intersections and convex hulls. Consider $A = (A_n)$, where, for each $n$, $A_n \subset S_\infty^n(\cc Y)$ ($\cc Y$ is a topological space).
Denote by $\ncconvexhull{A}$ the smallest matricial convex set on $\cc Y$ containing $A$ (the \emph{noncommutative}, or \emph{matricial}, \emph{convex hull} of $A$), and by $\clncconvexhull{A}$ its ``level-wise'' weak (that is, $\sigma(\cc Y, \cc Y^\flat)$) closure: if $\ncconvexhull{A} = (B_n)$, then $\clncconvexhull{A} = (\overline{B_n})$. It is easy to see that $\clncconvexhull{A}$ is matricial convex as well.

For matricial sets $A = (A_n)$ and $B = (B_n)$, we shall write $A \cap B$ and $A \cup B$ for $(A_n \cap B_n)_n$ and $(A_n \cup B_n)_n$, respectively.

\begin{prop}\label{intersection vs hull}
Suppose $A = (A_n)$ and $B = (B_n)$ are self-adjoint matricial sets on a topological space $\cc X$, with $0 \in A_1 \cap B_1$. Then $(A \cap B)^o = \clncconvexhull{A^o \cup B^o}$.
\end{prop}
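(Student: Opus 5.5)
We prove the two inclusions separately, taking the self-adjoint matricial polar $K^o$ with respect to the dual pair $(\cc X, \cc X^\flat)$. We view $A^o, B^o$ as self-adjoint matricial sets in $\cc X^\flat$ and take the closure $\clncconvexhull{\cdot}$ in $\sigma(\cc X^\flat, \cc X)$.

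\emph{The inclusion $\clncconvexhull{A^o \cup B^o} \subset (A \cap B)^o$.} Polars are antitone: if $x^\flat \in A_n^o$ then $\mp{x^\flat}{x} \leq I_{mn}$ for all $x \in A_m$, hence for all $x \in (A\cap B)_m$. Thus $A^o \cup B^o \subset (A\cap B)^o$. Next, $(A \cap B)^o$ is a self-adjoint matricial convex set. Direct sums pass through $\mp{\cdot}{x}$ blockwise. For compressions, $\mp{\alpha^* x^\flat \alpha}{x} = (\alpha \otimes I)^* \mp{x^\flat}{x} (\alpha\otimes I)$, up to the canonical shuffle of indices, and this is $\leq \|\alpha\|^2 I \leq I$ for $\alpha \in \ball(S_\infty^{n,k})$. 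Also $0 \in (A\cap B)^o_1$. Finally, each level of $(A\cap B)^o$ is $\sigma(\cc X^\flat,\cc X)$-closed, since for fixed $x$ the map $x^\flat \mapsto \mp{x^\flat}{x}$ is weak$^*$-continuous into a finite-dimensional space and the cone $\{T : T \leq I\}$ is closed. Hence the smallest weakly closed matricial convex set containing $A^o\cup B^o$ is contained in $(A\cap B)^o$.

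\emph{The inclusion $(A \cap B)^o \subset \clncconvexhull{A^o \cup B^o}$.} Set $K = \clncconvexhull{A^o \cup B^o}$. It is a self-adjoint, weakly closed matricial convex set in $\cc X^\flat$ with $0 \in K_1$. Suppose $x^\flat \in S_\infty^n(\cc X^\flat)_h$ lies outside $K_n$. Apply \Cref{hermitian separation} to the dual pair $(\cc X^\flat, \cc X)$; the roles are symmetric, and the continuous dual of $(\cc X^\flat, \sigma(\cc X^\flat,\cc X))$ is $\cc X$. This yields $x \in K_n^o \subset S_\infty^n(\cc X)_h$ with $\mp{x^\flat}{x} \not\leq I_{n^2}$. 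Since $A^o \subset K$, antitonicity gives $x \in K^o_n \subset A^{oo}_n$, and similarly $x \in B^{oo}_n$. So the key step is a bipolar statement: we want $x$ to lie in $(A\cap B)_n$, which would then contradict $x^\flat \in (A\cap B)^o_n$.

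\emph{The bipolar step is the main obstacle.} The inclusion $A \subset A^{oo}$ is automatic, but the reverse inclusion $A^{oo} \subset A$ requires $A$ to be weakly closed and matricial convex. Given that, it follows from \Cref{hermitian separation} applied in $\cc X$: any $y \notin A_n$ is separated by some $y^\flat \in A^o_n$ with $\mp{y^\flat}{y}\not\le I$, so $y \notin A^{oo}_n$. The statement only assumes $A, B$ are self-adjoint matricial sets containing $0$, so I would read it as tacitly requiring them to be weakly closed and matricial convex; otherwise $A^{oo}\cap B^{oo}$ is merely the closed matricial convex hull of each set intersected, and the identity could fail. Under that reading, $A^{oo} = A$ and $B^{oo} = B$, so $x \in (A \cap B)_n$, which gives the contradiction. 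If no such hypothesis is intended, I would instead prove the statement with $A$ and $B$ replaced by $\clncconvexhull{A}$ and $\clncconvexhull{B}$, noting that the polars are unchanged by taking hulls: $A^o = (\clncconvexhull{A})^o$, by the first paragraph's argument together with the fact that $\{y: \mp{x^\flat}{y} \le I\}$ is closed and matricial convex.
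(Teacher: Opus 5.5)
Your proof is correct and follows the paper's argument. The paper dismisses the easy inclusion as straightforward, and for the reverse inclusion it separates $x^\flat$ from $\clncconvexhull{A^o \cup B^o}$ via \Cref{hermitian separation} and then invokes the bipolar theorem \cite[Corollary 5.5]{effros1997matrix} to put $x$ in $A_n \cap B_n$; you do exactly this, except that you re-derive the bipolar step from \Cref{hermitian separation}. You are also right that this bipolar step silently needs $A$ and $B$ to be weakly closed and matricial convex. The paper's proof uses this tacitly, and it does hold where the proposition is applied, namely for $V$ and $(\cc X^2)^+$ in the proof of \Cref{nor to gen}.
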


The above proposition can be applied when $\cc X$ is an ordered normed space, and $\cc X^\flat$ is the space of linear functionals, with its weak$^*$ topology.

\begin{proof}
The inclusion $(A \cap B)^o \supset \clncconvexhull{A^o \cup B^o}$ is straightforward.
To establish the converse, consider $x^\flat \in S_\infty^n(\cc X^\flat)_h$ which does not belong to $\clncconvexhull{A^o \cup B^o}$.
By \Cref{hermitian separation}, there exists $x \in S_\infty^n(\cc X)_h$ so that $\mp{x^\flat}{x} \not\leq I_{n^2}$, while $\mp{y^\flat}{x} \leq I_{mn}$ whenever $y^\flat \in A_m^o \cup B_m^o$.
The latter condition implies, by the non-commutative bipolar theorem \cite[Corollary 5.5]{effros1997matrix}, that $x \in A_n \cap B_n$, which shows that $x^\flat \not\in (A_n \cap B_n)^\circ$. Thus, $(A \cap B)^\circ \subset \clncconvexhull{A^\circ \cup B^\circ}$, finishing the proof. 
\end{proof}

Below we relate normality and generation properties of $\cc X$ with those of $\cc X^\flat$.

\begin{prop}\label{gen to nor}
    If a matricial order operator space $\cc X$ ($\cc X^\flat$) is $C$-matricial generating, then $\cc X^\flat$ $($resp.~$\cc X)$ is $C$-matricial normal.
\end{prop}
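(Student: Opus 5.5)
We plan to use the self-adjoint reformulation of normality from \Cref{check-selfadj}(1). Take $u^\flat \in S_\infty^n(\cc X^\flat)_h$ and $v^\flat \in S_\infty^n(\cc X^\flat)^+$ with $v^\flat \geq \pm u^\flat$. We must show $\alpha_n^\flat(u^\flat) \leq C \alpha_n^\flat(v^\flat)$. By the duality convention, $\alpha_n^\flat(u^\flat) = \sup \{ \|\mp{u^\flat}{x}\| : \alpha_m(x) \leq 1 \}$, and by Smith's Lemma we may restrict to $m = n$.

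The main obstacle is that $x$ here is an arbitrary (not self-adjoint) matrix, so we must pass to hermitian test elements. Given $x \in S_\infty^m(\cc X)$ with $\alpha_m(x) < 1$, set $\overline{x} = \begin{pmatrix} 0 & x \\ x^* & 0 \end{pmatrix}$. This element is self-adjoint with $\alpha_{2m}(\overline{x}) = \alpha_m(x)$. Moreover $\|\mp{u^\flat}{\overline{x}}\| = \|\mp{u^\flat}{x}\|$. To see this, note that $u^\flat$ is hermitian, so $\mp{u^\flat}{x^*} = \mp{u^\flat}{x}^*$ up to a fixed permutation of indices. Hence $\mp{u^\flat}{\overline{x}}$ is, after a unitary shuffle, the off-diagonal hermitian dilation of $\mp{u^\flat}{x}$, whose norm equals that of $\mp{u^\flat}{x}$. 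It therefore suffices to bound $\|\mp{u^\flat}{x}\|$ for hermitian $x$ with $\alpha_m(x) < 1$.

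For such $x$, \Cref{check-selfadj}(2) and generation of $\cc X$ give $y \in S_\infty^m(\cc X)^+$ with $y \geq \pm x$ and $\alpha_m(y) < C + \vr$. Write $x = a - b$ with $a = (y+x)/2$ and $b = (y-x)/2$, both positive, so that $y = a + b$. Likewise write $u^\flat = a^\flat - b^\flat$ with $a^\flat = (v^\flat + u^\flat)/2$ and $b^\flat = (v^\flat - u^\flat)/2$, both positive. The pairing of positive elements is positive, by the definition of $S_\infty^n(\cc X^\flat)^+$. Expanding, we get $\mp{v^\flat}{y} \pm \mp{u^\flat}{x} = 2(\mp{a^\flat}{a} + \mp{b^\flat}{b})$ or $2(\mp{a^\flat}{b} + \mp{b^\flat}{a})$, according to the sign. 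Both are sums of positive matrices, so $-\mp{v^\flat}{y} \leq \mp{u^\flat}{x} \leq \mp{v^\flat}{y}$ in $S_\infty^{mn}$.

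For hermitian scalar matrices, $-P \leq H \leq P$ implies $\|H\| \leq \|P\|$. Hence
\[
\|\mp{u^\flat}{x}\| \leq \|\mp{v^\flat}{y}\| \leq \alpha_n^\flat(v^\flat)\, \alpha_m(y) < (C+\vr)\, \alpha_n^\flat(v^\flat).
\]
Letting $\vr \to 0$ and taking the supremum over $x$ yields $\alpha_n^\flat(u^\flat) \leq C \alpha_n^\flat(v^\flat)$, which proves $C$-matricial normality of $\cc X^\flat$.

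The parenthetical statement, that $C$-matricial generation of $\cc X^\flat$ implies $C$-matricial normality of $\cc X$, follows by the identical argument with the roles of $\cc X$ and $\cc X^\flat$ swapped. This uses the other duality condition, $\alpha_n(x) = \sup\{\|\mp{x^\flat}{x}\| : \alpha^\flat(x^\flat) \leq 1\}$, together with the fact that positive elements of $\cc X$ pair positively with positive elements of $\cc X^\flat$.
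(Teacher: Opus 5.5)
Your proof is correct, but it takes a different route from the paper's.

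The paper argues by contradiction directly with the $2\times 2$ block definitions. It starts from a positive block $\begin{pmatrix} u_1^\flat & u^\flat \\ (u^\flat)^* & u_2^\flat \end{pmatrix}$ witnessing the failure of normality. It picks a possibly non-hermitian $u$ with $\alpha_n(u)<1$ that norms $u^\flat$, and takes the positive block $\begin{pmatrix} u_1 & u \\ u^* & u_2 \end{pmatrix}$ supplied by generation. Pairing the two blocks and compressing to rows and columns $1$ and $4$ (\Cref{products}) gives a positive scalar block matrix. Its off-diagonal entry $\mp{u}{u^\flat}$ has norm above $C$ while its diagonal entries have norm below $C$, which is impossible.

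You instead pass to the order-interval formulations of both normality and generation via \Cref{check-selfadj}. This forces two extra steps: the hermitian dilation $\overline{x}$, and the identity $\mp{u^\flat}{x^*}=\mp{u^\flat}{x}^*$ for hermitian $u^\flat$. That identity actually holds exactly, directly from the definition of the involution on $\cc X^\flat$, with no index permutation needed. In exchange, your positivity input is only that the pairing of two positive elements is positive. Your expansion of $\mp{a^\flat\pm b^\flat}{a\pm b}$ is precisely a proof of the ``by-product'' lemma the paper states after \Cref{products} without proof.

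So the paper's argument is shorter, because the block formulation handles non-hermitian test elements automatically. Yours is slightly more elementary at the pairing step.

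A cosmetic point: your appeal to Smith's Lemma to restrict to $m=n$ is unnecessary. You then test at level $2m$ anyway, and that is fine because $\|\mp{v^\flat}{y}\|\le \alpha_n^\flat(v^\flat)\,\alpha_k(y)$ holds at every level $k$, $\alpha_n^\flat$ being the c.b.\ norm.
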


For future use, we record:

\begin{lem}\label{products}
    Suppose $u^\flat, u_1^\flat, u_2^\flat \in S_\infty^n(\cc X^\flat)$ and $u, u_1, u_2 \in S_\infty^m(\cc X)$ are such that $\begin{pmatrix} u_1^\flat & u^\flat \\ (u^\flat)^* & u_2^\flat \end{pmatrix} \in S_\infty^{2n}(\cc X^\flat)^+$, and $\begin{pmatrix}  u_1 & u  \\  u^* & u_2 \end{pmatrix} \in S_\infty^{2m}(\cc X)^+$. Then $\begin{pmatrix}    \mp{u_1}{u_1^\flat}  &   \mp{u}{u^\flat}  \\  \mp{u}{u^\flat}^*  &  \mp{u_2}{u_2^\flat}  \end{pmatrix}  \in \big(S_\infty^{2nm}\big)^+$.
\end{lem}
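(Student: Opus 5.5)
The plan is to realize the block matrix in question as the pairing of a single positive element of $S_\infty^{2n}(\cc X^\flat)$ with a single positive element of $S_\infty^{2m}(\cc X)$, and then compress. Put
$$
U^\flat = \begin{pmatrix} u_1^\flat & u^\flat \\ (u^\flat)^* & u_2^\flat \end{pmatrix} \in S_\infty^{2n}(\cc X^\flat)^+, \qquad
U = \begin{pmatrix} u_1 & u \\ u^* & u_2 \end{pmatrix} \in S_\infty^{2m}(\cc X)^+ .
$$
By the definition of the dual wedge in \Cref{ss:duality}, $\mp{U^\flat}{U}$ is positive in $S_\infty^{4nm}$. The four-by-four block entries of $\mp{U^\flat}{U}$ are the sixteen pairings $\mp{u_\alpha^\flat}{u_\beta}$, where $u_\alpha^\flat$ runs over the blocks of $U^\flat$ and $u_\beta$ over the blocks of $U$.

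The claimed matrix is built only from the diagonal-type pairings: (block $(1,1)$ of $U^\flat$, block $(1,1)$ of $U$), (block $(1,2)$, block $(1,2)$), (block $(2,1)$, block $(2,1)$), and (block $(2,2)$, block $(2,2)$). First, fix the indexing convention for $\mp{\cdot}{\cdot}$ on the doubled spaces. Rows are indexed by $(i,k)$, with $i$ ranging over $\{1,2\}\times[m]$ and $k$ over $\{1,2\}\times[n]$. Next, let $P$ be the partial isometry $\ell_2^{2nm} \to \ell_2^{4nm}$ that sends the first copy of $\ell_2^{nm}$ to the index block (first half of $U$, first half of $U^\flat$) and the second copy to (second half, second half). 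Then $P^* \mp{U^\flat}{U} P$ is exactly the $2\times 2$ block matrix whose entries are $\mp{u_1^\flat}{u_1}$, $\mp{u^\flat}{u}$, $\mp{(u^\flat)^*}{u^*}$ and $\mp{u_2^\flat}{u_2}$. Since $P^*(\cdot)P$ preserves positivity, this compressed matrix is positive.

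It remains to match this with the statement. The order of arguments in the statement is $\mp{u}{u^\flat}$ rather than $\mp{u^\flat}{u}$; I read it as the same pairing, or its transpose under a canonical permutation of tensor indices, and a permutation unitary conjugation preserves positivity in either case. The main bookkeeping point is the lower-left entry: I need $\mp{(u^\flat)^*}{u^*} = \mp{u^\flat}{u}^*$. This should follow directly from the defining identity $\mp{x^{\flat*}}{x} = \mp{x^\flat}{x^*}^*$ applied with $x = u^*$, or entrywise from $\langle v^{\flat*}, v\rangle = \overline{\langle v^\flat, v^*\rangle}$ combined with the index conventions for adjoints of matrices. I expect this index check to be the only real obstacle: one must make sure the entries selected by $P$ line up as adjoint blocks rather than as transposed ones. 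If a transpose does appear, I would instead pick $P$ to select the $(2,1)$ pairing block in place of the $(1,2)$ one, or conjugate by the flip unitary, which fixes it.
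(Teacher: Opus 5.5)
Your proposal is correct and is essentially the paper's argument: pair the two positive $2\times 2$ block matrices to get a positive element of $S_\infty^{4nm}$, view it as a $4\times 4$ block matrix, and compress to block rows and columns $1$ and $4$. The lower-left entry you flag as the main obstacle is not actually a problem. A compression of a positive matrix is self-adjoint, so that block is automatically $\mp{u}{u^\flat}^*$; this also checks directly from $\langle v^{\flat*}, v\rangle = \overline{\langle v^\flat, v^*\rangle}$.
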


\begin{proof}
We clearly have
\[
\mp{\begin{pmatrix}  u_1 & u  \\  u^* & u_2 \end{pmatrix}}{\begin{pmatrix} u_1^\flat & u^\flat \\ (u^\flat)^* & u_2^\flat \end{pmatrix}} \in \big( S_\infty^{4nm} \big)^+ .
\]
The left side can be viewed as a $4 \times 4$ $S_\infty^{mn}$-valued matrix. The desired conclusion follows by restricting to rows and columns $1$ and $4$.
\end{proof}

As a by-product of the above reasoning, we obtain: 

\begin{lem}
    Suppose $u$ and $u^\flat$ are self-adjoint elements of $S_\infty^n(\cc X)$ and $S_\infty^m(\cc X^\flat)$, respectively. Suppose, furthermore, that $-v \leq u \leq v$ and $-v^\flat \leq u^\flat \leq v^\flat$, for some $v \in S_\infty^n(\cc X), v^\flat \in S_\infty^n(\cc X^\flat)$. Then $\| \mp{u^\flat}{u} \|^2 \leq \| \mp{v^\flat}{v} \|^2$.
\end{lem}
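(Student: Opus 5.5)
The plan is to reduce the statement to \Cref{products}, using the block-matrix reformulation of order intervals from \Cref{about selfadjointness}, together with a standard fact about positive $2\times 2$ block matrices of scalars.

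First, the hypothesis $-v \leq u \leq v$ with $u = u^*$ gives, exactly as in \Cref{about selfadjointness} (the averaging argument works verbatim at matrix level $n$), that
\[
\begin{pmatrix} v & u \\ u & v \end{pmatrix} \in S_\infty^{2n}(\cc X)^+ .
\]
The argument is to tensor $y \pm x$ with $\begin{pmatrix} 1 & \pm 1 \\ \pm 1 & 1\end{pmatrix}$ and average. Axiom (2) of \Cref{d:ordered properties} applies with $a = \begin{pmatrix} 1 \\ \pm 1\end{pmatrix} \otimes I_n$. Likewise,
\[
\begin{pmatrix} v^\flat & u^\flat \\ u^\flat & v^\flat \end{pmatrix} \in S_\infty^{2m}(\cc X^\flat)^+ .
\]
Here I use that the dual wedges also satisfy the axioms of \Cref{d:ordered properties}, as noted at the start of \Cref{ss:duality}. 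I read the size of $v^\flat$ as $m$, matching $u^\flat$.

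Second, I apply \Cref{products} to these two positive block matrices, with $u_1 = u_2 = v$ and $u_1^\flat = u_2^\flat = v^\flat$. This yields
\[
\begin{pmatrix} \mp{v^\flat}{v} & \mp{u^\flat}{u} \\ \mp{u^\flat}{u}^* & \mp{v^\flat}{v} \end{pmatrix} \in (S_\infty^{2nm})^+ .
\]
The ordering convention for $\mp{\cdot}{\cdot}$ is immaterial up to a unitary permutation. The only care needed is that the extraction of rows and columns $1$ and $4$ in the proof of \Cref{products} produces exactly this block.

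Third, I use the standard scalar fact: if $\begin{pmatrix} A & B \\ B^* & D\end{pmatrix} \geq 0$ in $S_\infty^{2k}$, then $\|B\|^2 \leq \|A\|\,\|D\|$. To prove it, for unit vectors $\xi,\eta$ compress by $\begin{pmatrix}\xi & 0\\ 0 & \eta\end{pmatrix}$ to get a positive $2\times 2$ scalar matrix. Nonnegativity of its determinant gives $|\langle B\eta,\xi\rangle|^2 \leq \langle A\xi,\xi\rangle\langle D\eta,\eta\rangle \leq \|A\|\|D\|$. With $A = D = \mp{v^\flat}{v}$ and $B = \mp{u^\flat}{u}$, this gives $\|\mp{u^\flat}{u}\|^2 \leq \|\mp{v^\flat}{v}\|^2$, which is the claim.

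The main obstacle is bookkeeping rather than substance. One must check that the positivity of the dual block matrix holds in the sense used by \Cref{products}. One must also check that the entries obtained after the row and column restriction really are $\mp{u^\flat}{u}$ and its adjoint. The adjoint requires the convention $\mp{x^{\flat*}}{x} = \mp{x^\flat}{x^*}^*$ together with the self-adjointness of $u$ and $u^\flat$. I would verify this on elementary tensors $a\otimes x$, $b\otimes x^\flat$ before concluding.
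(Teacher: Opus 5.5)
Your proposal is correct and is exactly the argument the paper intends. The paper states this lemma without a separate proof, as a by-product of \Cref{products}: apply that lemma to the positive block matrices $\begin{pmatrix} v & u \\ u & v \end{pmatrix}$ and $\begin{pmatrix} v^\flat & u^\flat \\ u^\flat & v^\flat \end{pmatrix}$ supplied by \Cref{about selfadjointness}, then use the standard bound $\|B\|^2 \leq \|A\|\,\|D\|$ for a positive $2\times 2$ block matrix. Your reading of $v^\flat$ as an element of $S_\infty^m(\cc X^\flat)$ is the right way to fix the size mismatch in the statement.
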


\begin{proof}[Proof of \Cref{gen to nor}]
We shall show that the matricial generation of $\cc X$ implies the matricial normality of $\cc X^\flat$; jumping from $\cc X^\flat$ to $\cc X$ is done similarly.
Suppose, for the sake of contradiction, that $\cc X$ is $C$-matricial generating, while $\cc X^\flat$ is not $C$-normal on the level $n$ -- that is, we can find $u^\flat, u_1^\flat, u_2^\flat \in S_\infty^n(\cc X^\flat)$ so that $\alpha_n^\flat(u^\flat) > C$, while $1 > \alpha_n^\flat(u_1^\flat) \vee \alpha_n^\flat(u_2^\flat)$, and $\begin{pmatrix} u_1^\flat & u^\flat \\ (u^\flat)^* & u_2^\flat \end{pmatrix} \geq 0$.
Find $u \in S_\infty^n(\cc X)$ so that $\alpha_n(u) < 1$, while $\|\mp{u}{u^\flat}\|_{n^2} > C$.
As $\cc X$ is $C$-matricial generating, we can find $u_1, u_2 \in S_\infty^n(\cc X)$ so that $\alpha_n(u_1), \alpha_n(u_2) < C$, yet $\begin{pmatrix}  u_1 & u  \\  u^* & u_2 \end{pmatrix} \geq 0$. By \Cref{products},
\[
\begin{pmatrix}    \mp{u_1}{u_1^\flat}  &   \mp{u}{u^\flat}  \\  \mp{u}{u^\flat}^*  &  \mp{u_2}{u_2^\flat}  \end{pmatrix}  \geq 0 .
\]
This, however, is impossible, since $\|\mp{u}{u^\flat}\| > C > \|\mp{u_1}{u_1^\flat}\| \vee \|\mp{u_2}{u_2^\flat}\|$.
\end{proof}

In the direction opposite to \Cref{gen to nor}, we establish:

\begin{prop}\label{nor to gen}
Suppose $\cc X$ is a matricial order operator space.
\begin{enumerate}
    \item 
If $\cc X$ is $C$-matricial normal, then $\cc X^\flat$ is $C$-matricial generating.
\item 
If $\cc X^\flat$ is $C$-matricial normal, and $\cc X$ is complete, then $\cc X$ is $C$-matricial generating.
\end{enumerate}
\end{prop}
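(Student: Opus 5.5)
We will use the Hahn--Banach/bipolar machinery of \Cref{ss:duality}, and work with self-adjoint elements throughout, which \Cref{check-selfadj}(2) permits. For (1), fix $n$, a self-adjoint $u^\flat \in S_\infty^n(\cc X^\flat)_h$ with $\alpha_n^\flat(u^\flat) < 1$, and $\vr>0$. We must produce $v^\flat \in S_\infty^n(\cc X^\flat)^+$ with $v^\flat \geq \pm u^\flat$ and $\alpha_n^\flat(v^\flat) \leq C+\vr$.

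To do this, we introduce two hermitian matricial convex sets on $\cc X$. The first is the ball $A = (A_m)$, $A_m = \{x \in S_\infty^m(\cc X)_h : \alpha_m(x) \leq 1\}$. The second is $B = (B_m)$, $B_m = \{x \in S_\infty^m(\cc X)_h : \text{there is } y \geq \pm x \text{ with } \alpha_m(y) \leq 1\}$, a ``solid'' ball. By $C$-normality and \Cref{check-selfadj}(1), $B_m \subset C A_m$. The set $B$ is matricial convex, since $a^* y a \geq \pm a^* x a$ and direct sums are preserved. We also need $B$ weakly closed; we aim to establish this, or else work with its closure, which by normality still lies in $CA$, since $CA_m$ is weakly closed.

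Next we compute the polars. $A^o$ is the self-adjoint part of the dual ball. The key computation is $B^o$. We claim that $x^\flat \in B^o_n$ is, up to the closure, equivalent to the existence of $v^\flat \geq \pm x^\flat$ with $\alpha_n^\flat(v^\flat) \leq 1$. One inclusion is easy, via \Cref{products} / the last lemma before the proof of \Cref{gen to nor}: if $v^\flat \geq \pm x^\flat$ and $y \geq \pm x$, then $\|\mp{x^\flat}{x}\| \leq \|\mp{v^\flat}{y}\| \leq 1$. For the hard inclusion, write $B = \clncconvexhull{A \cup (P-P)\cdots}$, or more concretely use \Cref{intersection vs hull}. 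The set $\{x : y \geq \pm x,\ \alpha(y)\leq 1\}$ is the image of $\{(x,y) : y\pm x \geq 0,\ \alpha(y) \le 1\}$, which is an intersection of the ball with two cone-type sets. Dualizing the intersection gives the closed matricial convex hull of the polars, and the polar of a positive cone condition yields negative positive functionals. Assembling this gives $u^\flat = a^\flat - b^\flat$ with $a^\flat, b^\flat \ge 0$ and $\alpha^\flat(a^\flat + b^\flat) \le 1 + \vr$, and \Cref{reformulate normality}(2) then finishes. Since $B \subset CA$, we get $B^o \supset C^{-1} A^o$, so $u^\flat/C \in B^o_n$, which yields $v^\flat$ with norm at most $C(1+\vr)$.

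The main obstacle is the identification of $B^o$ exactly, rather than merely its weak$^*$ closure: we need the set $\{x^\flat : \exists\, v^\flat \ge \pm x^\flat,\ \alpha^\flat(v^\flat)\le 1\}$ to be weak$^*$ closed. We plan to get this from weak$^*$ compactness of the dual ball: if $x^\flat_i \to x^\flat$ with dominating $v_i^\flat$ in a bounded set, a subnet of the $v_i^\flat$ converges weak$^*$, and positivity passes to the limit because the dual cone is weak$^*$ closed. This compactness argument is exactly what is unavailable in (2).

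For (2), we apply the same scheme with the roles swapped, using the weak$^*$-to-weak duality. Normality of $\cc X^\flat$ and part (1) applied to $\cc X^\flat$ give matricial generation of $\cc X^{\flat\flat}$; alternatively, dualizing as above directly gives that every $u \in S_\infty^n(\cc X)_h$ with $\alpha_n(u)<1$ lies in the norm closure of the set of $x$ having a dominating $y$ with $\alpha_n(y) \le C$. Without compactness we only get this approximately, with an error at most $\delta\,\alpha_n(u)$ for arbitrary $\delta$. Completeness then enters through the standard open-mapping-style iteration. We write $u = x_0 + r_1$ with $\alpha_n(r_1) < \delta$, approximate $r_1$ similarly, and so on, summing the dominating elements $y_k$ with $\sum \alpha_n(y_k) \le C(1+\delta+\delta^2+\cdots)$. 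The series converges by completeness, and the closedness of the cones gives $\sum y_k \ge \pm u$, yielding $C$-generation up to $\vr$.
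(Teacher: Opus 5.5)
Your proposal is correct and is essentially the paper's proof. Your set $\{(x,y) : y \pm x \geq 0,\ \alpha_m(y) \leq 1\}$, projected onto $x$, is the paper's $T(U)$ with $T(a,b)=a-b$ after the substitution $a=(y+x)/2$, $b=(y-x)/2$. In both cases the polar is computed via \Cref{intersection vs hull} as the closure of ``dual ball minus dual cone,'' followed by weak$^*$ compactness in (1) and by norm approximation plus the completeness iteration in (2).

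One step needs more careful statement. The closed hull only produces nets $(c_i^\flat - d_i^\flat,\ y_i^\flat - c_i^\flat - d_i^\flat) \to (x^\flat,0)$ with $c_i^\flat,d_i^\flat \geq 0$, where only the $y_i^\flat$ are norm-bounded. You do not get approximants $x_i^\flat$ with exact bounded dominating elements, so compactness must be applied to the $y_i^\flat$, as the paper does. After passing to a weak$^*$-convergent subnet $y_i^\flat \to y^\flat$, one gets $c_i^\flat \to (y^\flat + x^\flat)/2$ and $d_i^\flat \to (y^\flat - x^\flat)/2$, and both limits are positive.
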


\begin{proof}
We first establish (1), and then briefly describe the changes one needs to introduce into the proof to handle (2).

(1) In light of \Cref{reformulate normality}, we are reduced to showing: if $C \alpha_m(a+b) \geq \alpha_m(a-b)$ for any $a,b \in S_\infty^m(\cc X)^+$, then for any $x^\flat \in \ball(S_\infty^n(\cc X^\flat))_h$ there exists $y^\flat \in S_\infty^n(\cc X^\flat)^+$ so that $y^\flat \geq x^\flat \geq - y^\flat$, and $\alpha_n(y^\flat) \leq C$.

Equip the operator space ${\mathcal{X}}^2 := {\mathcal{X}} \oplus_\infty {\mathcal{X}}$ with componentwise order. For each $n$ consider $U_n = \{(a,b) \in S_\infty^n({\mathcal{X}}^2)^+ : \alpha_n(a+b) \leq 1/C\}$. One can check that $U = (U_n)$ is matricial convex.

Define $T : {\mathcal{X}}^2 \to {\mathcal{X}} : (a,b) \mapsto a-b$. Our assumption states that, for any $n$, $T_n(U_n) \subset \ball(S_\infty^n(\mathcal X))_h$ (we use shorthand $T_n = I_{S_\infty^n} \otimes T$). For the self-adjoint matricial polar (defined and described earlier in this section), we obtain: $(T_n(U_n))^o \supset \ball(S_\infty^n(\mathcal X^\flat))_h$.
It remains to show that for any $x^\flat \in (T_n(U_n))^o$ there exists $y \in C \ball(S_\infty^n({\cc X}^\flat))_h$ with $y^\flat \geq x^\flat \geq -y^\flat$.

Note that $x^\flat$ belongs to $(T_n(U_n))^o$ if, and only if, for any $x \in U_k$ we have $\mp{x^\flat}{T_k x} = \mp{T_n^\flat x^\flat}{x} \leq I_{kn}$ (here $T_n^\flat = T^\flat \otimes I_{S_\infty^n})$), that is, $T_n^\flat x^\flat \in U_n^o$.
It is easy to verify that $T_n^\flat x^\flat = (x^\flat, -x^\flat)$. Thus, $x^\flat \in (T_n(U_n))^o$ iff $(x^\flat,-x^\flat) \in U_n^o$.

Further, $U = V \cap ({\mathcal{X}}^2)^+$, where $({\mathcal{X}}^2)^+$ is a shorthand for $(S_\infty^k({\mathcal{X}}^2)^+)_k$, and $V = (V_k)$, where $V_k$ is the set of all $(a,b) \in S_\infty^k({\mathcal{X}}^2_h)$ with $\alpha_k(a+b) \leq 1/C$. 

The set $V = (V_k)$ is matricial convex; $(y^\flat, z^\flat) \in V_n^o$ iff $\mp{(y^\flat,z^\flat)}{(a,b)} = \mp{y^\flat}{a} + \mp{z^\flat}{b} \leq I_{kn}$ whenever $(a,b) \in V_k$. Note that $(a,-a) \in V_k$ for an arbitrary $a \in S_\infty^k(\mathcal X)_h$, hence $\mp{y^\flat - z^\flat}{a} \leq I_{kn}$ for any such $a$; this is possible only if $y^\flat = z^\flat$.
In this situation, $\mp{y^\flat}{a+b} \leq I_{kn}$ whenever $\alpha_k(a+b) \leq 1/C$, which happens iff $\alpha_n^\flat(y^\flat) \leq C$. To summarize, $V_n^o$ is the set of all pairs $(y^\flat,y^\flat)$, with $\alpha_n^\flat(y^\flat) \leq C$.
 
By \Cref{intersection vs hull}, 
$$
U^o = \big(V \cap ({\mathcal{X}}^2)^+ \big)^o = \clncconvexhull{V^o \cup - (({\mathcal{X}}^2)^\flat)^+} = \overline{ V^o - (({\mathcal{X}}^2)^\flat)^+ } 
$$
(as before, $\clncconvexhull{\cdot}$ denotes the level-wise $\sigma(\cc X^\flat, \cc X)$ closure).
Consequently, $(x^\flat,-x^\flat) \in U_n^o$ iff there exist nets $y_i^\flat \in C \ball(S_\infty^n(\cc X^\flat)_h)$ and $u_i^\flat, v_i^\flat \in \cc X^{\flat+}$ so that $(y_i^\flat-u_i^\flat)_i$ and $(y_i^\flat-v_i^\flat)_i$ converge to $x^\flat$ and $-x^\flat$, respectively, in $\sigma(\cc X^\flat, \cc X)$.
A compactness argument shows that, by passing to a subnet, we can assume that $(y_i^\flat)$ converges to $y^\flat \in C \ball(S_\infty^n(\cc X^\flat)_h)$, in $\sigma(\cc X^\flat, \cc X)$.
For any $x \in S_\infty(\cc X)^+$,
$$
\liminf_i \mp{y_i^\flat - x^\flat}{x} = \liminf_i \mp{u_i^\flat}{x} \geq 0 ,
$$
hence $y^\flat \geq x^\flat$. Likewise $y^\flat \geq -x^\flat$.

(2) Next we outline the changes needed to prove that, if $\cc X^\flat$ is matricially normal, then $\cc X$ is matricially generating. We need to establish: if $C \alpha_m^\flat(a^\flat+b^\flat) \geq \alpha_m^\flat(a^\flat-b^\flat)$ for any $a^\flat,b^\flat \in S_\infty^m(\cc X^\flat)^+$, then for any $\vr \in (0,1/3)$, and $x \in \ball(S_\infty^n(\cc X)_h)$ there exists $y \in S_\infty^n(\cc X^\flat)^+$ so that $y \geq x \geq - y$, and $\alpha_n(y) \leq C(1 + \vr)$.

Define the matricial sets $U$, $V$, etc., interchanging the roles of $\cc X$ and $\cc X^\flat$. For instance, $U_n = \{(a^\flat,b^\flat) \in S_\infty^n(({\mathcal{X}}^\flat)^2)^+ : \alpha_n^\flat(a^\flat+b^\flat) \leq 1/C\}$.
We arrive at $U^o = \overline{ V^o - ({\mathcal{X}}^2)^+ }$, where, in the right side, we take the closure in the norm (equivalently, $\sigma(\cc X, \cc X^\flat)$) topology.

For a moment, consider $a \in \ball(S_\infty^n(\cc X)_h)$. Then $(a,-a) \in U_n^o$, hence, for every $\delta > 0$, there exist $z \in C \ball(S_\infty^n(\cc X)_h)$ and $u, v \in S_\infty^n(\cc X)^+$ so that $\alpha_n((z-u)-a), \alpha_n((z-v)+a) < \delta$. 
Let $z' = (u+v)/2$, and $a' = (u-v)/2$, then $-z' \leq a' < z'$. By the triangle inequality, $\alpha_n(z' - z) < \delta$, hence $\alpha_n(z') < 1 + \delta$, and
$$ \alpha_n(a' - a) = \alpha_n \Big( \big( \frac{u+v}2 - z\big) + \big(z - u - a\big) \Big) < 2 \delta . $$ 

Now take $x = x_0 \in \ball(S_\infty^n(\cc X)_h)$ and $\vr \in (0,1)$. Use the preceding paragraph to write $x_0 = x_1 + x_0'$ so that all summands are self-adjoint, $\alpha_n(x_1) < \vr/3$, and there exists a positive $z_0$ with $-z_0 \leq x_0' \leq z_0$, and $\alpha_n(z_0) < (1 + \vr/3)C$.
Further, write $x_1 = x_2 + x_1'$ so that all summands are self-adjoint, $\alpha_n(x_2) < (\vr/3)^2$, and there exists a positive $z_1$ with $-z_1 \leq x_1' \leq z_1$, and $\alpha_n(z_1) < 3^{-1} \vr( 1 + \vr/3) C$.
Proceeding further in the same manner, we write, for each $k$, $x = x_0' + x_1' + \ldots + x_{k-1}' + x_k$, where $\alpha_n(x_k) < (\vr/3)^k$, and for any $j$, there exists $y_j \geq \pm x_j'$, with $\alpha_n(z_j) < (\vr/3)^j (1+\vr/3)C$. Let $y = \sum_{j=0}^\infty z_j$, then 
$$
\alpha_n(y) < \sum_{j=0}^\infty \Big(\frac\vr3\Big)^j \Big(1+\frac\vr3\Big)C = \frac{1+\vr/3}{1-\vr/3} C < (1+\vr) C .
$$
Also, $x = \sum_{j=0}^\infty x_j'$, hence $y - x = \sum_{j=0} (z_j - x_j') \geq 0$, and similarly, $y + x \geq 0$. Thus, $y$ has the desired properties.
\end{proof}

\begin{rmk}
\cite{Schreiner-JFA} used Haagerup tensor products to prove that, if $\cc X$ is $1$-normal and $1$-generating, then so is $\cc X^\flat$.
\end{rmk}

\section{Minimal and maximal matricial order spaces}\label{min max structures}

It is well-known (see e.g. \cite{Blecher1991tensor}, \cite[Section 3.3]{effros-ruan-book}, or \cite[Section 3]{Pisier-Op-Spaces}) that a Banach space can be equipped with different operator space structures, among them the maximal and minimal ones.
In this section, we endow a given ordered Banach space with operator space \emph{and} matricial order structures -- specifically, the maximal and minimal ones -- and study these structures.

Throughout, a Banach space $\cc X$ is assumed to be complex, with its positive wedge $\cc X^+$, and an antilinear involution $x \mapsto x^*$, which leaves $\cc X^+$ invariant.

For future use, we need to observe that self-adjoint elements of $\cc X$ can be characterized using positive functionals.

\begin{prop}\label{rich in +}
Suppose $\cc X^+$ is the positive cone of an ordered Banach space $\cc X$. Then $x \in \cc X$ is self-adjoint if and only if $\langle x^\flat, x \rangle \in \R$ for any $x^\flat \in \cc X^{\flat+}$.
\end{prop}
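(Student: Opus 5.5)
The plan is to reduce everything to the real ordered Banach space $\cc X_h$, with positive cone $\cc X^+ \subset \cc X_h$, and then apply the bipolar characterization \cite[Theorem 2.13]{Aliprantis-Tourky} recalled in \Cref{s:order Banach}. Throughout, I read $\cc X^{\flat+}$ as in \Cref{ss:duality}: it consists of the \emph{self-adjoint} functionals $x^\flat$, meaning $\langle x^\flat, x^* \rangle = \overline{\langle x^\flat, x\rangle}$, that are nonnegative on $\cc X^+$.

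\emph{Forward direction.} Suppose $x = x^*$ and $x^\flat \in \cc X^{\flat+}$. Since $x^\flat$ is self-adjoint,
$$
\langle x^\flat, x\rangle = \langle x^\flat, x^*\rangle = \overline{\langle x^\flat, x \rangle},
$$
so $\langle x^\flat, x\rangle$ is real. Nothing more is needed here.

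\emph{Converse, setup.} Suppose $\langle x^\flat, x\rangle \in \R$ for every $x^\flat \in \cc X^{\flat+}$. Split $x = a + \iota b$ with
$$
a = \frac{x+x^*}{2}, \qquad b = \frac{x-x^*}{2\iota},
$$
both in $\cc X_h$. Because the involution is isometric, $\|a\|, \|b\| \leq \|x\|$. For any self-adjoint functional, $\langle x^\flat, a\rangle$ and $\langle x^\flat, b\rangle$ are real, so $\Im \langle x^\flat, x \rangle = \langle x^\flat, b\rangle$. The hypothesis therefore gives $\langle x^\flat, b\rangle = 0$ for every $x^\flat \in \cc X^{\flat+}$. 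It remains to show that this forces $b = 0$.

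\emph{Converse, key step.} I will identify positive continuous real functionals $g$ on $\cc X_h$ with elements of $\cc X^{\flat+}$. Given such a $g$, define
$$
x^\flat(u + \iota v) = g(u) + \iota g(v), \qquad u, v \in \cc X_h .
$$
This is well defined because $\cc X = \cc X_h \oplus \iota \cc X_h$ as real spaces, and it is complex linear by a direct check. It is bounded with $\|x^\flat\| \leq 2\|g\|$, using $\|u\|, \|v\| \leq \|u + \iota v\|$. It is self-adjoint because $(u+\iota v)^* = u - \iota v$, and it is positive because $\cc X^+ \subset \cc X_h$, where $x^\flat$ agrees with $g$. Hence $g(b) = 0$ for every positive $g \in (\cc X_h)^\flat$, and so $g(\pm b) \geq 0$ for all such $g$.

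\emph{Conclusion.} By \cite[Theorem 2.13]{Aliprantis-Tourky} applied to the closed wedge $\cc X^+$ in the real Banach space $\cc X_h$, we get $\pm b \in \cc X^+$. Since $\cc X^+$ is a cone, $b = 0$, and thus $x = a$ is self-adjoint.

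The only point needing care is the key step: verifying that the complexification of a positive real functional is continuous and lands in $\cc X^{\flat+}$. This is exactly where isometry of the involution and the decomposition from \Cref{lem:hermitian} (with $n=1$) are used.
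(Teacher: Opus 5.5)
Your proof is correct and is essentially the paper's argument: both reduce to the real ordered space $\cc X_h$ and invoke \cite[Theorem 2.13]{Aliprantis-Tourky}. The paper uses it in dual form: pointedness of $\cc X^+$ makes $\cc X^{\flat+}-\cc X^{\flat+}$ weak$^*$ dense in $\cc X^\flat_h$, so real-valuedness passes to all self-adjoint functionals. You use it in bipolar form, obtaining $\pm b\in\cc X^+$ and hence $b=0$, and your explicit complexification of real functionals on $\cc X_h$, together with reading $\cc X^{\flat+}$ as consisting of self-adjoint functionals, makes precise two points the paper uses silently (the forward direction genuinely needs the latter, e.g.\ when $\cc X^+=\{0\}$).
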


\begin{proof}
If $x \in \cc X_h$, then clearly $\langle x^\flat, x \rangle \in \R$ for any $x^\flat \in \cc X^{\flat+}$. For the opposite implication, note that, by \cite[Theorem 2.13]{Aliprantis-Tourky}, $\cc X^{\flat+} - \cc X^{\flat+}$ is weak$^*$ dense in $\cc X^\flat_h$. Thus, if $\langle x^\flat, x \rangle \in \R$ for any $x^\flat \in \cc X^{\flat+}$, then the same holds for any $x^\flat \in \cc X_h$, which shows the self-adjointness of $x$.
\end{proof}

Begin with describing the minimal structure on  an ordered Banach space $\cc X$.

\begin{defn}\label{min-structure}
For an ordered Banach space $\cc X$, equip $S_\infty^n(\cc X)$ with its injective Banach space tensor product (that is, $\cc X$ has the minimal operator space structure). 
That is, for $x \in S_\infty^n(\cc X)$, let $\|x\| = \sup_{a \in \ball(S_1^n)} \|{\mathrm{tr}} \otimes I_{\cc X}(a \cdot x)\|_{\cc X}$ (here ${\mathrm{tr}} \otimes I_{\cc X}(a \cdot (b \otimes y)) = \tra(ab) y$). 
An $ x \in S_\infty^n(\MIN(\cc X))$ belongs to the positive wedge if ${\mathrm{tr}} \otimes I_{\cc X}(a \cdot x) \in \cc X^+$ whenever $a \in S_1^{n+}$. 
\end{defn}
By an extreme point argument, $\|x\| = \sup_{\xi,\eta \in \ball(\ell_2^n)} \| \langle \xi | x | \eta \rangle \|_{\cc X}$.
Similarly, a self-adjoint $x$ is positive iff $\langle \xi | x | \xi \rangle \in \cc X^+$ for any $\xi \in \ell_2^n$. By \Cref{rich in +}, the self-adjointness is automatic if $\cc X^+$ is a cone (and not merely a wedge).

Clearly, the object described here is indeed a matricial convex wedge; 
$S_\infty^1(\MIN(\cc X))^+$ is identified with $\cc X^+$. By \Cref{lem:when cone}, $S_\infty^n(\MIN(\cc X))^+$ is a cone whenever $\cc X^+$ is.

Any $x = \sum_{i,j} E_{ij} \otimes x_{ij} \in S_\infty^n(\MIN(\cc X))^+$ is self-adjoint. Indeed, the positivity of $\langle \xi | x | \xi \rangle$ with $|\xi\rangle = |i\rangle$ tells us that $x_{ii} = x_{ii}^* \geq 0$. Further, taking  $|\xi\rangle = |i\rangle + |j\rangle$, we obtain $x_{ij} = x_{ji}^*$.

In yet another reformulation, $x \in S_\infty^n(\cc X)^+$ iff $\langle a \otimes x^\flat, x \rangle \geq 0$ whenever $a \in S_\infty^{n+}, x^\flat \in \cc X^{\flat+}$ (see \cite[Theorem 2.13]{Aliprantis-Tourky}).
By the self-adjointness noted earlier, the positive cone in $S_\infty^n(\cc X)$ is nothing but the ``injective cone'' $C_i \subset (S_\infty^n)_h \otimes \cc X_h$ from \cite{Wittstock}.
In other words, $C_i$ consists of those $x \in S_\infty^n(\cc X)$ which define a positive operator $\oper{x}$ from $S_1^n$ to $\cc X$. 
Then $\|x\|_{S_\infty^n(\MIN(\cc X))} = \|\oper{x}\|$. 

Some simple properties of the $\MIN$ structure are noted below. 

\begin{prop}\label{min passes to subspace}
$(1)$ Suppose $\cc X$ is an ordered complex Banach space, and $\cc Y$ is its involution-invariant subspace. Then $\MIN(\cc Y)$ embeds into $\MIN(\cc X)$ as a matricial order operator space -- that is, it inherits the matricial norms and the matricial positive wedges from the latter.

$(2)$ If $\cc Y$ is an ordered Banach space, $\cc X$ is a matricial order operator space, and $T : \cc X \to \cc Y$ is positive, then it is completely positive as a map from $\cc X$ to $\MIN(\cc Y)$.
\end{prop}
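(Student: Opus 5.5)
Both parts reduce to the scalar descriptions of $\MIN$ given after \Cref{min-structure}. There, the norm of $x\in S_\infty^n(\MIN(\cc X))$ is $\sup_{\xi,\eta\in\ball(\ell_2^n)}\|\langle\xi|x|\eta\rangle\|_{\cc X}$. Positivity means $x$ is self-adjoint and $\langle\xi|x|\xi\rangle\in\cc X^+$ for all $\xi\in\ell_2^n$. Equivalently, $x\in S_\infty^n(\MIN(\cc X))^+$ iff $\mathrm{tr}\otimes I_{\cc X}(a\cdot x)\in\cc X^+$ for all $a\in S_1^{n+}$.

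For (1), I would fix $y\in S_\infty^n(\cc Y)$ and note that the compressions $\langle\xi|y|\eta\rangle$ lie in $\cc Y$. Since $\cc Y$ carries the restricted norm, the two suprema defining $\|y\|_{S_\infty^n(\MIN(\cc Y))}$ and $\|y\|_{S_\infty^n(\MIN(\cc X))}$ coincide; this is the usual injectivity of the $\MIN$ structure. For positivity, I take $\cc Y^+=\cc Y\cap\cc X^+$. Because $\cc Y$ is involution-invariant, $S_\infty^n(\cc Y)_h=S_\infty^n(\cc Y)\cap S_\infty^n(\cc X)_h$. Also, for $\xi\in\ell_2^n$ we have $\langle\xi|y|\xi\rangle\in\cc Y^+$ iff $\langle\xi|y|\xi\rangle\in\cc X^+$. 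Hence the positive wedge of $S_\infty^n(\MIN(\cc Y))$ equals $S_\infty^n(\cc Y)\cap S_\infty^n(\MIN(\cc X))^+$. Thus the inclusion is a complete isometry and completely bipositive.

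For (2), I would take $x\in S_\infty^n(\cc X)^+$ and $\xi\in\ell_2^n$, and view $\xi$ as an $n\times1$ matrix. Axiom (2) of \Cref{d:ordered properties} gives $\xi^*\cdot x\cdot\xi\in\cc X^+$. Applying $T$ entrywise commutes with this compression, since $\xi^*\cdot T_n(x)\cdot\xi=T(\xi^*\cdot x\cdot\xi)$; so positivity of $T$ yields $\langle\xi|T_n x|\xi\rangle\in\cc Y^+$. It remains to check that $T_n x$ is self-adjoint. Here I use that positive $x$ is hermitian and that $T$ is involution-preserving, which is built into \Cref{bipositivity}; then $T_n(x)^*=T_n(x^*)=T_n(x)$. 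If $T$ is only assumed positive, I would instead argue from the reformulation above: when $\cc Y^+$ is a cone, \Cref{rich in +} gives self-adjointness automatically. Failing that, I would deduce it directly from the diagonal and polarization entries, as in the remark following \Cref{min-structure}, using $\xi=|i\rangle+|j\rangle$ and $|i\rangle+\iota|j\rangle$.

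The only delicate point is this self-adjointness issue in (2) when $\cc Y^+$ is merely a wedge. I would handle it by assuming $T$ is $*$-preserving, as completely positive maps are required to be in \Cref{bipositivity}; the rest is a routine verification.
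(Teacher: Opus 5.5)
Your proposal is correct and follows essentially the paper's argument: both parts reduce to the compression description of $S_\infty^n(\MIN(\cdot))^+$, namely $\langle\xi|y|\xi\rangle\in\cc Y^+$ for all $\xi\in\ell_2^n$, with (2) coming from $\xi^*\cdot x\cdot\xi\in\cc X^+$ and the linearity of $T$. Your concern about self-adjointness is also fine but not really needed: as the paper notes after \Cref{min-structure}, positivity of all compressions (with $\cc Y^+\subset\cc Y_h$) already forces $T_n(x)=T_n(x)^*$ via the vectors $|i\rangle+|j\rangle$ and $|i\rangle+\iota|j\rangle$, even when $\cc Y^+$ is only a wedge.
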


\begin{proof}[Sketch of a proof]
Recall that $y \in S_\infty^n(\MIN(\cc Y))^+$ iff $\langle \xi |y| \xi \rangle \in \cc Y^+$ for any $\xi \in \ell_2^n$. From this, (1) immediately follows. For (2), note that, if $x \in S_\infty^n(\cc X)^+$, then $\langle \xi |x| \xi \rangle \in \cc X^+$ for any $\xi \in \ell_2^n$.
\end{proof}

Next we show that the matricial order space $\MIN(\cc X)$ has certain desirable properties for a class of ordered spaces $\cc X$.

\begin{defn}
We say that an ordered Banach space $\cc X$ is \emph{$\MIN$-nice} if $\|x\| \leq \|x_1\| \vee \|x_2\|$ whenever 
\begin{equation}
x_1, x_2 \geq 0 , \, \, {\textrm{  and  }} \, \, 
t^2 x_1 + s^2 x_2 \geq 2 ts \Re(\omega x) 
\, \, {\textrm{  for any   }} \, \,  t,s \in \R , \, |\omega| = 1 .
\label{eq:min-nice}
\end{equation}
\end{defn}

As an example of an ordered space with desired properties, consider $\cc X = S_\infty^n$, with a unitarily invariant norm $\| \cdot \|$ and natural order. The condition \eqref{eq:min-nice} is equivalent to $\begin{pmatrix} x_1 & x \\ x^* & x_2 \end{pmatrix} \geq 0$.
In this case $\|x\|^2 \leq \|x_1\| \|x_2\|$ (see \cite{horn-mathias} for this, and more results in the same vein). This gives us one example of $\MIN$-niceness. In \Cref{lattice} below, we shall see other instances of this, coming from Banach lattices.

\begin{rmk}
    If $\cc X$ is $\MIN$-nice, then $\cc X_h$ is $1$-normal, as a real ordered Banach space. Indeed, suppose $x, y \in \cc X_h$ satisfy $\pm x \leq y$. Then $t^2 y + s^2 y \geq 2 ts \Re(\omega x)$ for $t,s,\omega$ as in \eqref{eq:min-nice}, hence $\|x\| \leq \|y\|$, per definition.
\end{rmk}

\begin{prop}\label{min structure on lattices}
An ordered Banach space $\cc X$ is $\MIN$-nice if and only if the matricial order space $\MIN(\cc X)$ described above is $1$-matricial normal.
\end{prop}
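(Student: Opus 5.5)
The plan is to reduce both directions to the case $n=1$ of the definition of $1$-matricial normality, and to identify condition \eqref{eq:min-nice} with positivity of a $2\times 2$ matrix in $S_\infty^2(\MIN(\cc X))$. The first step is to unpack positivity. By the description following \Cref{min-structure}, $\begin{pmatrix} x_1 & x \\ x^* & x_2 \end{pmatrix}$ is positive in $S_\infty^2(\MIN(\cc X))$ iff it is self-adjoint and, for every $\xi = (\xi_1,\xi_2) \in \ell_2^2$,
\[
|\xi_1|^2 x_1 + |\xi_2|^2 x_2 + 2\Re(\overline{\xi_1}\xi_2 x) \in \cc X^+ .
\]
Writing $\overline{\xi_1}\xi_2 = -ts\,\omega$ with $t,s \geq 0$ and $|\omega|=1$, and allowing sign changes of $t,s$, this is exactly $t^2x_1 + s^2 x_2 \geq 2ts\Re(\omega x)$ for all real $t,s$ and unimodular $\omega$. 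Taking $t=1,s=0$ (or $s=1,t=0$) gives $x_1,x_2 \geq 0$. So \eqref{eq:min-nice} holds iff $\begin{pmatrix} x_1 & x \\ x^* & x_2 \end{pmatrix} \in S_\infty^2(\MIN(\cc X))^+$. Here self-adjointness of $x_1,x_2$ and the relation between $x$ and $x^*$ are part of the data, or automatic by the remark after \Cref{min-structure}.

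\emph{Necessity} then follows at once. If $\MIN(\cc X)$ is $1$-matricial normal, apply the definition at level $n=1$: whenever \eqref{eq:min-nice} holds, the block matrix is positive, so $\|x\| \le \|x_1\| \vee \|x_2\|$. This is precisely $\MIN$-niceness.

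\emph{Sufficiency} is the main step. Suppose $\begin{pmatrix} u_1 & u \\ u^* & u_2 \end{pmatrix} \in S_\infty^{2n}(\MIN(\cc X))^+$ with $u_i, u \in S_\infty^n(\cc X)$. We must show $\|u\|_{\MIN} \le \|u_1\|_{\MIN} \vee \|u_2\|_{\MIN}$. Using $\|u\| = \sup_{\xi,\eta\in\ball(\ell_2^n)} \|\langle\xi|u|\eta\rangle\|$, fix unit vectors $\xi,\eta$ and compress by the $2n\times 2$ matrix $\begin{pmatrix} \xi & 0 \\ 0 & \eta\end{pmatrix}$. By axiom (2) of \Cref{d:ordered properties}, this yields
\[
\begin{pmatrix} \langle\xi|u_1|\xi\rangle & \langle\xi|u|\eta\rangle \\ \langle\eta|u^*|\xi\rangle & \langle\eta|u_2|\eta\rangle\end{pmatrix} \in S_\infty^2(\MIN(\cc X))^+ .
\]
Hence its entries satisfy \eqref{eq:min-nice}, and $\MIN$-niceness gives
\[
\|\langle\xi|u|\eta\rangle\| \le \|\langle\xi|u_1|\xi\rangle\| \vee \|\langle\eta|u_2|\eta\rangle\| \le \|u_1\| \vee \|u_2\| .
\]
Taking the supremum over $\xi,\eta$ finishes the argument.

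The only delicate point is the bookkeeping in the first step. One must check that the real parametrization by $t,s,\omega$ covers all $\xi\in\C^2$, which is a matter of phases and signs. One must also check that the off-diagonal entry $\langle\eta|u^*|\xi\rangle$ equals $\langle\xi|u|\eta\rangle^*$, which follows from the definition of the adjoint on $S_\infty^n(\cc X)$.
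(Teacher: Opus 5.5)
Your proposal is correct and follows essentially the same route as the paper. For the main direction, the paper picks $\xi_1,\xi_2$ with $\|\langle\xi_1|x|\xi_2\rangle\|>1$ and tests positivity on the vectors $t\xi_1\oplus s\xi_2$, which is equivalent to your compression followed by the $2\times 2$ characterization of \eqref{eq:min-nice}. For the converse, the paper, like you, applies $1$-matricial normality at level $n=1$.
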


\begin{proof}
Suppose first $\cc X$ is $\MIN$-nice. Consider $\begin{pmatrix}  x_1 & x  \\  x^* & x_2  \end{pmatrix} \in S_\infty^{2n}(\cc X)^+$, and $\|x\| > 1$; show that $\|x_1\| \vee \|x_2\| > 1$.

Find $\xi_1, \xi_2 \in \ball(\ell_2^n)$ so that $\|u\| > 1$, where $u = \langle \xi_1 | x | \xi_2 \rangle$. Let $u_i = \langle \xi_i | x_i | \xi_i \rangle$. Testing the positivity of $\begin{pmatrix}  x_1 & x  \\  x^* & x_2  \end{pmatrix} \in S_\infty^{2n}(\cc X)^+$ on $t \xi_1 \oplus s \xi_2$, we see that the inequality $|t|^2 u_1 + |s|^2 u_2 + 2 \Re (ts u) \geq 0$ holds for any $t,s \in \C$. So, $u_1, u_2$ satisfy \eqref{eq:min-nice}; by $\MIN$-niceness, $\|u_1\| \vee \|u_2\| > 1$, hence $\|x_1\| \vee \|x_2\| > 1$.

Conversely, suppose $\MIN(\cc X)$ described above is $1$-matricial normal, and $x, x_1, x_2 \in \cc X$ satisfy \eqref{eq:min-nice}. Then $\begin{pmatrix}  x_1 & x  \\  x^* & x_2  \end{pmatrix} \in S_\infty^{2}(\cc X)^+$, hence by normality, $\|x\| \leq \|x_1\| \vee \|x_2\|$.
\end{proof}

In \Cref{lattice} we shall see examples of minimal matricial order spaces which are not matricial generating.

Next we describe the maximal structure on an ordered space $\cc X$ (denoted by $\MAX(\cc X)$).

\begin{defn}\label{max structure}
The sequence of matricial norms on $\MAX(\cc X)$ is defined classically: for $x \in S_\infty^n(\cc X)$, $\norm{x} = \sup \norm{I \otimes T(x)}$, with the supremum taken over all contractions $T : \cc X \to S_\infty^m$ (so $I \otimes T(x) \in S_\infty^{mn}$).
Define the positive wedge as follows: $x \in S_\infty^n(\MAX(\cc X))^+$ if $x = x^*$, and $I \otimes T(x) \in S_\infty^{mn+}$ for any positive $T$ as above; one can verify that this is indeed a (closed) matricial wedge, and $S_\infty^1(\MAX(\cc X))^+ = \cc X^+$. 
\end{defn}

Note that $S_\infty^n(\MAX(\cc X))^+ \subset S_\infty^n(\MIN(\cc X))^+$. Indeed, suppose $x$ lies outside of $S_\infty^n(\MIN(\cc X))^+$. Then for some $\xi \in \ell_2^n$, $\langle \xi |x| \xi \rangle \notin \cc X^+$. Find $x^\flat \in \cc X^{\flat+}$ so that $\langle x^\flat, \langle \xi |x| \xi \rangle \rangle \notin \R^+$.
Equivalently, $\langle \xi| I_{S_\infty^n} \otimes x^\flat (x) |\xi \rangle < 0$. Interpret $x^\flat$ as a positive operator $\cc X \to \C$, and observe that $I_{S_\infty^n} \otimes x^\flat (x) \notin S_\infty^{n+}$, which shows that $x \notin S_\infty^n(\MAX(\cc X))^+$. In particular, if $\cc X^+$ is a cone, then so are the matricial wedges in $\MAX(\cc X)$.

\begin{rmk}
For $x \in S_\infty^n(\cc X)_h$, the following statements are equivalent:
\begin{enumerate}
\item $x \in S_\infty^n(\MAX(\cc X))^+$.
\item $I \otimes T(x) \in S_\infty^m(B(H))^+$ for any Hilbert space $H$, and any positive $T : \cc X \to B(H)$.
\item $I \otimes T(x) \in S_\infty^m(\cc Y)^+$ for any positive $T : \cc X \to \cc Y$, where $\cc Y$ is a matricial order operator space, and $\cc Y^+$ is a cone.
\end{enumerate}
Indeed, the implications $(3) \Rightarrow (1) \Leftrightarrow (2)$ are straightforward. From \cite{werner2002matrix}, any matricial order operator space $\cc Y$ embeds completely bipositively into $B(H)$, whence $(2) \Rightarrow (3)$. 
\end{rmk}

The term ``maximal'' is justified here:

\begin{prop}
If $\cc X$ is an ordered Banach space, $\cc Y$ is a matricial ordered operator space for which $\cc Y^+$ is a cone, and $T : \cc X \to \cc Y$ is positive, then it determines a completely positive map $\MAX(\cc X) \to \cc Y$.
\end{prop}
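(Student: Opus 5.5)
We must show: if $x \in S_\infty^n(\MAX(\cc X))^+$, then $T_n x := (I_{S_\infty^n} \otimes T)(x) \in S_\infty^n(\cc Y)^+$. The plan is to reduce to the defining property of $\MAX(\cc X)$, where positivity is tested against positive maps into matrix algebras. To do this we detect positivity in $S_\infty^n(\cc Y)$ by composing with positive maps $\cc Y \to S_\infty^m$ built from matricial positive functionals on $\cc Y$.

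\emph{Step 1: positivity in $\cc Y$ is detected by $\cc Y^\flat$.} The canonical embedding $\cc Y \to \cc Y^{\flat\flat}$ is completely bipositive (the proposition just before \Cref{hermitian dimension}). Unwinding this via the definition of the positive wedge of the dual (applied twice), a self-adjoint $y \in S_\infty^n(\cc Y)$ is positive iff $\mp{y^\flat}{y} \in S_\infty^{nm+}$ for every $m$ and every $y^\flat \in S_\infty^m(\cc Y^\flat)^+$. By \cite{effros1997matrix} it suffices to take $m=n$. Alternatively, this follows from the separation theorem \Cref{hermitian separation} applied to the closed matricial convex set $(S_\infty^k(\cc Y)^+)_k$.

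\emph{Step 2: matricial positive functionals are positive maps.} Each $y^\flat \in S_\infty^m(\cc Y^\flat)^+$ defines a map $\Phi: \cc Y \to S_\infty^m$, $\Phi(y) = \mp{y^\flat}{y}$. By the definition of $S_\infty^m(\cc Y^\flat)^+$ (with level $1$ of $\cc Y$), $\Phi$ is positive; it is also bounded. Then $\Phi\circ T : \cc X \to S_\infty^m$ is positive, bounded, and $*$-preserving, the latter because $T$ and $y^\flat$ respect involutions.

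\emph{Step 3: conclusion.} For $x \in S_\infty^n(\MAX(\cc X))^+$, first note that $T_n x$ is self-adjoint, since $x = x^*$ and $T$ is involution-preserving. Next, $\mp{y^\flat}{T_n x}$ coincides, up to the canonical shuffle of indices, with $(I_{S_\infty^n}\otimes(\Phi\circ T))(x)$. After rescaling $\Phi\circ T$ to a contraction, this lies in $S_\infty^{nm+}$ by \Cref{max structure}. Step 1 then gives $T_n x \in S_\infty^n(\cc Y)^+$.

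The main obstacle is Step 1: the characterization of matricial positivity in $\cc Y$ by $S_\infty^m(\cc Y^\flat)^+$ requires a matricial Hahn--Banach separation of a self-adjoint non-positive $y$ from the closed cone. This is where the hypothesis that $\cc Y^+$ is a cone (so that the relevant polars behave well and the wedges are closed) and \Cref{hermitian separation} enter. Alternatively, one can bypass Step 1 entirely by invoking \cite{werner2002matrix}: embed $\cc Y$ completely bipositively into some $B(H)$, and then compress to finite-dimensional subspaces of $H$. Positivity in $B(H)$ is detected by compressions $P_F (\cdot) P_F$ to finite-dimensional $F$, which are positive maps into matrix algebras, so the same composition argument applies.
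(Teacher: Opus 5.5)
Your argument is correct, but your main route differs from the paper's. The paper's proof is a one-line appeal to \cite{werner2002matrix}: $\cc Y$ embeds completely bipositively into some $B(H)$. One then composes $T$ with this embedding and uses that positivity in $S_\infty^n(\MAX(\cc X))$ can be tested against positive maps into $B(H)$, or, after compressing, into matrix algebras. That is exactly the alternative you sketch at the end.

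Your primary route replaces Werner's representation theorem by duality. The complete bipositivity of $\cc Y \to \cc Y^{\flat\flat}$ (in effect the matricial bipolar theorem, \Cref{hermitian separation}) says that positivity in $S_\infty^n(\cc Y)$ is detected by the maps $\Phi : \cc Y \to S_\infty^m$ coming from $S_\infty^m(\cc Y^\flat)^+$. Each $\Phi \circ T$, after rescaling, is then a legitimate test map in the definition of the $\MAX$ cone. This makes the proof self-contained given the Effros--Winkler separation already used in the paper. The paper's route is shorter but rests on a deep external theorem.

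One correction to your closing commentary: the hypothesis that $\cc Y^+$ is a cone plays no role in your Step 1. The separation argument works for any closed wedge, and closedness is part of the definition of a matricial order space, not a consequence of pointedness. The hypothesis is needed only in the paper's route, since an injective completely bipositive map into $B(H)$ forces $\cc Y^+$ to be a cone. Your duality argument therefore dispenses with it.

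An even shorter proof is available from \Cref{cone in max}. There, $T_n(\sum_i a_i \otimes x_i) = \sum_i a_i \otimes T x_i$ is positive by the matricial order axioms, and you conclude by continuity of $T_n$ and closedness of $S_\infty^n(\cc Y)^+$.
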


\begin{proof}[Sketch of a proof]
By \cite{werner2002matrix}, $\cc Y$ embeds into $B(H)$ completely bipositively.
\end{proof}

We can also provide an alternative description of $S_\infty^n(\MAX(\cc X))^+$.

\begin{lem}\label{cone in max}
For every $n$, $S_\infty^n(\MAX(\cc X))^+$ coincides with the norm $($equivalently, weak$)$ closure of finite sums $\sum_i a_i \otimes x_i$, with $a_i \in S_\infty^{n+}, x_i \in \cc X^+$.
\end{lem}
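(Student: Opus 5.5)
Let $D_n$ denote the set of finite sums $\sum_i a_i \otimes x_i$ with $a_i \in S_\infty^{n+}$ and $x_i \in \cc X^+$, and let $\overline{D_n}$ be its norm closure in $S_\infty^n(\MAX(\cc X))$. The plan is to prove the two inclusions separately: the inclusion $\overline{D_n} \subset S_\infty^n(\MAX(\cc X))^+$ directly, and the reverse inclusion by a Hahn--Banach separation argument. Since $\overline{D_n}$ is convex, its norm and weak closures agree, which handles the parenthetical ``equivalently, weak''.

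For $\overline{D_n} \subset S_\infty^n(\MAX(\cc X))^+$, take $x = \sum_i a_i \otimes x_i$ and a positive contraction $T : \cc X \to S_\infty^m$. Then $I \otimes T(x) = \sum_i a_i \otimes T x_i$, and each term is a tensor product of two positive matrices, hence lies in $S_\infty^{mn+}$. Such an $x$ is also self-adjoint, since $(a \otimes y)^* = a^* \otimes y^*$ and $x_i^* = x_i$ for $x_i \in \cc X^+$. Because the MAX wedge is norm closed (each $I \otimes T$ is continuous for the MAX norm), the closure $\overline{D_n}$ is contained in it as well.

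For the reverse inclusion, suppose $x \in S_\infty^n(\MAX(\cc X))^+$ but $x \notin \overline{D_n}$. Since $x$ and $D_n$ lie in the real space $S_\infty^n(\cc X)_h$, and $\overline{D_n}$ is a closed convex wedge there, I would apply Hahn--Banach to obtain a real-linear continuous functional $\phi$ on $S_\infty^n(\cc X)_h$ with $\phi \geq 0$ on $D_n$ and $\phi(x) < 0$. I would extend $\phi$ complex-linearly to $S_\infty^n(\cc X) = S_\infty^n(\cc X)_h + \iota S_\infty^n(\cc X)_h$ and write it as $\phi(y) = \sum_{i,j} \langle f_{ij}, y_{ij} \rangle$ with $f_{ij} \in \cc X^\flat$. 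Next, define $T : \cc X \to S_\infty^n$ by $T(u) = (\langle f_{ij}, u \rangle)_{i,j}$, up to a transpose chosen so that $\phi(a \otimes u) = \tr(a^{t} T(u))$ or $\tr(a\,T(u))$. Positivity of $\phi$ on every $a \otimes u$ with $a \geq 0$ and $u \in \cc X^+$ means that $T(u)$, or its transpose, is positive semidefinite for each $u \in \cc X^+$. So $T$ (suitably transposed) is positive, and after rescaling it is a positive contraction. This is exactly the duality of \Cref{different positivity}, applied at level $1$.

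The final step is to turn $\phi(x) < 0$ into a violation of the MAX-positivity of $x$. I would express $\phi(x)$ as $\langle \zeta | (I \otimes T)(x) | \zeta \rangle$ for the maximally entangled vector $\zeta = \sum_k |k\rangle \otimes |k\rangle$. Positivity of $x$ in $\MAX$ gives $(I \otimes T)(x) \geq 0$, hence $\phi(x) \geq 0$, a contradiction. The main obstacle is getting the transposes and index conventions right, so that both the positivity of $T$ and the identity $\phi(x) = \langle \zeta | (I\otimes T)(x) | \zeta\rangle$ hold with the same $T$. I would first check this on elementary tensors $E_{ij} \otimes u$ and fix the transpose there. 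If the transposed map is the one that turns out positive, I would use that transpose is a positive map on $S_\infty^n$ to replace $T$ by $T^t$ without losing positivity, and adjust $\zeta$ accordingly.
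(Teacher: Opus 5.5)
Your proof is correct. The easy inclusion and the Mazur remark about weak versus norm closure match the paper; the difference is in the separation step.

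\textbf{The paper's route.} The paper notes that the closures $K_n$ of the sums $\sum_i a_i\otimes x_i$ form a matricial convex set and applies the matricial separation theorem \cite[Theorem 5.4]{effros1997matrix}. This gives a self-adjoint $x^\flat\in S_\infty^n(\cc X^\flat)$ with $\mp{x^\flat}{y}\le I$ on every $K_m$ but $\mp{x^\flat}{x}\not\le I$. Because the $K_m$ are cones, this forces $\mp{x^\flat}{y}\le 0$. Taking $m=1$, the map $T$ determined by $-x^\flat$ is positive, while $(I\otimes T)(x)=-\mp{x^\flat}{x}$ is not.

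\textbf{Your route.} You separate with the classical Hahn--Banach theorem in the real space $S_\infty^n(\cc X)_h$. You then turn the scalar functional into a map $\cc X\to S_\infty^n$ by hand, using the maximally entangled vector. This is essentially the mechanism inside the proof of the matricial separation theorem, specialized to cones.

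\textbf{The transposes.} Take $T(u)=(\langle f_{ij},u\rangle)_{i,j}$ and $\zeta=\sum_k|k\rangle\otimes|k\rangle$. Then $\langle\zeta|(I\otimes T)(x)|\zeta\rangle=\sum_{i,j}T(x_{ij})_{ij}=\phi(x)$, and $\phi(a\otimes u)=\tr(a^{t}T(u))$. Since $a\mapsto a^t$ preserves $S_\infty^{n+}$, $T$ itself is positive and no adjustment of $\zeta$ is needed. Testing with rank-one $a$ over complex scalars also shows that each $T(u)$ is self-adjoint.

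\textbf{Comparison.} Your argument is self-contained and uses only scalar duality. The paper's is shorter given the Effros--Webster machinery, and it keeps the lemma inside the matricial-polar framework of \Cref{ss:duality}. Both produce a witness $T$ with values in $S_\infty^n$, so both give \Cref{max control dimension} for free.
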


\begin{proof}
Clearly the elementary tensors $a_i \otimes x_i$ (with positive $a_i, x_i$) belong to the positive cone of $S_\infty^n(\MAX(\cc X))$, which gives us inclusion in one direction.

To prove the opposite, denote the closure of finite sums $\sum_i a_i \otimes x_i$ ($a_i \in S_\infty^{n+}, x_i \in \cc X^+$) by $K_n$. Pick a self-adjoint $x \in S_\infty^n(\MAX(\cc X)) \backslash K_n$, and show that $x \notin S_\infty^n(\MAX(\cc X))^+$.
The family $(K_n)$ is matricial convex, hence, by \cite[Theorem 5.4]{effros1997matrix} (see also \Cref{ss:duality}), there exists a self-adjoint $x^\flat \in S_\infty^n \otimes \cc X^\flat$ so that $\mp{x^\flat}{y} \leq I_{S_\infty^{mn}}$ for any $m$, and any $y \in K_m$, while $\mp{x^\flat}{x} \not\leq I_{S_\infty^{mn}}$.
As $(K_m)$ is a matricial cone, the first condition means that $-\mp{x^\flat}{y} \in S_\infty^{nm+}$ for any $y \in K_m$. Specializing to $m=1$, we conclude that the operator $T$, determined by $-x^\flat$, is positive. On the other hand, $I_{S_\infty^n} \otimes T(x) = - \mp{x^\flat}{x} \notin S_\infty^{n^2+}$ (otherwise we would have $\mp{x^\flat}{x} \leq 0 \leq I_{S_\infty^{mn}}$, so $x \notin S_\infty^n(\MAX(\cc X))^+$.
\end{proof}

\begin{rmk}\label{max control dimension}
    The above proof also shows that a self-adjoint $x \in S_\infty^n \otimes \cc X$ is positive in $S_\infty^n(\MAX(\cc X))$ iff $I \otimes T \in S_\infty^{n^2+}$ for any positive $T : \cc X \to S_\infty^n$.
\end{rmk}

We next show that an ``appropriate'' quotient of a maximal matricial order space is again maximal (cf.~\Cref{min passes to subspace}(1)). To be able to define a ``natural'' order on the quotient space, we mod out by kernels, as described in the paragraph preceding \Cref{quotient-generating}. 
More specifically, suppose $\cc X$ is an ordered Banach space, with positive cone $\cc X^+$, and $J = \ker U$, where $U : \cc X \to \cc Y$ is a positive map, $\cc Y$ is an ordered space, with $\cc Y^+$ being a cone. Turn $\cc X/J$ into an ordered space by defining the involution in the obvious way, and letting $(\cc X/J)^+$ be the closure of $\cc X^+ + J$ (as shown in \cite[Section 3.2]{humeniuk2023extension}, this is a cone).

\begin{prop}\label{max quotients}
In the above notation, $\MAX(\cc X)/J = \MAX(\cc X/J)$ as matrical order operator spaces.
\end{prop}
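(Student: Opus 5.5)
We need to show that the quotient of $\MAX(\cc X)$ by $J$ and $\MAX(\cc X/J)$ carry the same matricial norms and the same matricial positive cones on every level $n$.

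\textbf{Norms.} On the quotient side, $S_\infty^n(\MAX(\cc X)/J)$ has the quotient norm of $S_\infty^n(\MAX(\cc X))/S_\infty^n(J)$. This is the classical fact that quotients of maximal operator spaces are maximal (see \cite[Section 3.3]{effros-ruan-book} or \cite{Pisier-Op-Spaces}). For completeness we would argue by duality. By the Blecher--Paulsen description, a functional-valued matrix $y^\flat \in S_\infty^n((\cc X/J)^\flat)$ corresponds to a map $\cc X/J \to S_\infty^n$, whose c.b.~norm on $\MAX$ equals its norm. Also $(\MAX(\cc X)/J)^\flat = J^\perp \subset \MAX(\cc X)^\flat = \MIN(\cc X^\flat)$ completely isometrically. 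Since $J^\perp \cong (\cc X/J)^\flat$ isometrically and $\MIN$ passes to subspaces, the duals coincide completely isometrically, and hence so do the spaces.

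\textbf{Cones, inclusion $S_\infty^n(\MAX(\cc X)/J)^+ \subset S_\infty^n(\MAX(\cc X/J))^+$.} The left side is the closure of $q_n(S_\infty^n(\MAX(\cc X))^+)$. By \Cref{cone in max}, it is therefore the closure of the images of sums $\sum_i a_i \otimes x_i$ with $a_i \geq 0$ and $x_i \in \cc X^+$. Each such image $\sum_i a_i \otimes q(x_i)$ has $q(x_i) \in (\cc X/J)^+$, so it lies in $S_\infty^n(\MAX(\cc X/J))^+$, again by \Cref{cone in max}. This cone is closed, and the norms agree by the previous paragraph, so the inclusion follows.

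\textbf{Cones, reverse inclusion (main step).} Take a self-adjoint $u$ in $S_\infty^n(\MAX(\cc X/J))^+$ that is not in $K_n := S_\infty^n(\MAX(\cc X)/J)^+$. The family $(K_m)$ is a closed hermitian matricial convex cone: $q$ is completely positive and the $\MAX$ wedges are matricial convex, so closures of their images are too. \Cref{hermitian separation} then gives a self-adjoint $y^\flat \in S_\infty^n((\cc X/J)^\flat)$ with $\mp{y^\flat}{k} \leq I$ for all $k \in K_m$ and $\mp{y^\flat}{u} \not\leq I_{n^2}$. Because $K$ is a cone, scaling gives $-\mp{y^\flat}{k} \geq 0$ for all $k \in K_m$.

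Specialize to $m=1$. For each $x \in \cc X^+$ we have $q(x) \in K_1$, so $T := -y^\flat : \cc X/J \to S_\infty^n$ satisfies $T(q(\cc X^+)) \subset S_\infty^{n+}$. By continuity, $T$ is positive on the closure $(\cc X/J)^+$. Hence $T$ is a positive map $\cc X/J \to S_\infty^n$, so $I \otimes T(u) \geq 0$ by the definition of the positive cone of $\MAX(\cc X/J)$. But $I \otimes T(u) = -\mp{y^\flat}{u}$, so $\mp{y^\flat}{u} \leq 0 \leq I_{n^2}$, which contradicts the choice of $y^\flat$.

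The delicate point is this separation step. We must check that $K$ is weakly closed and hermitian, and that the separating functional can be taken in $(\cc X/J)^\flat = J^\perp$, i.e.\ weak continuity relative to the quotient topology. The norm identification from the first paragraph supplies exactly this. We also need that $u$ is self-adjoint, which holds by definition of the $\MAX$ cone.
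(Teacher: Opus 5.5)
Your proof is correct. The norm part and the inclusion $S_\infty^n(\MAX(\cc X)/J)^+ \subset S_\infty^n(\MAX(\cc X/J))^+$ go essentially as in the paper. The paper just cites \cite[Proposition 3.3]{Pisier-Op-Spaces} for the norms, where you give the duality argument via $J^\perp \subset \MAX(\cc X)^\flat = \MIN(\cc X^\flat)$.

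For the reverse inclusion you take a different route.

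\textbf{The paper's route.} It stays on the generator side. By \Cref{cone in max} applied to $\cc X/J$, any $v \in S_\infty^n(\MAX(\cc X/J))^+$ is a limit of sums $\sum_k b_{mk} \otimes w_{mk}$ with $b_{mk} \geq 0$ and $w_{mk} \in (\cc X/J)^+$. Since $(\cc X/J)^+$ is the closure of $q(\cc X^+)$, one may take $w_{mk} = q(z_{mk})$ with $z_{mk} \in \cc X^+$. The approximants then lie in $q_n(S_\infty^n(\MAX(\cc X))^+)$, and closedness finishes.

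\textbf{Your route.} You separate $u$ from the closed hermitian matricial cone $K$ using \Cref{hermitian separation}. You then read off a positive map $-y^\flat : \cc X/J \to S_\infty^n$ from the level-one part $K_1 = (\cc X/J)^+$. This is exactly the argument by which \Cref{cone in max} itself is proved, re-run directly on the quotient.

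\textbf{Comparison.} The paper's version is shorter and symmetric: both inclusions become one-line consequences of the generator description. Yours is self-contained modulo the separation theorem, and it makes explicit that only the level-one cone $(\cc X/J)^+$ enters.

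\textbf{A small correction to your closing remark.} The weak closedness of $K_n$ needed for the separation comes from $K_n$ being norm closed and convex (Mazur's theorem). It does not come from the complete isometry of the first paragraph. At each fixed matrix level, all the relevant norms on $S_\infty^n(\cc X/J)$ are equivalent anyway. Also, the separation is applied directly in the dual pair $(\cc X/J, (\cc X/J)^\flat)$, so there is no separate issue of the functional lying in $J^\perp$.
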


\begin{proof}
By \cite[Proposition 3.3]{Pisier-Op-Spaces}, $\MAX(\cc X)/J = \MAX(\cc X/J)$ as operator spaces. 

For the order structures, show first that $S_\infty^n(\MAX(\cc X)/J)^+ \subset S_\infty^n(\MAX(\cc X/J))^+$.
Recall from the explanation preceding \Cref{quotient-generating} that $u \in S_\infty^n(\MAX(\cc X)/J)^+$ iff $u = \lim_m q_n(x_m)$, where $x_m \in S_\infty^n(\MAX(\cc X))^+$, and $q_n$ is the $n$-amplification of the quotient map $q : \cc X \to \cc X/J$.
By \Cref{cone in max}, we can assume that $x_m = \sum_k a_{mk} \otimes y_{mk}$, with $a_{mk} \in S_\infty^{n+}$ and $y_{mk} \in \cc X^+$. Thus, $u = \lim_m \sum_k a_{mk} \otimes q_n(y_{mk})$. As $\sum_k a_{mk} \otimes q_n(y_{mk})$ lies in $S_\infty^n(\MAX(\cc X/J))^+$, the same is true for $u$.

On the other hand, any $v \in S_\infty^n(\MAX(\cc X/J))^+$ can be written as $v = \lim_m v_m$, with $v_m = \sum_k b_{mk} \otimes w_{mk}$, where $b_{mk} \in S_\infty^{n+}$ and $w_{mk} \in (\cc X/J)^+$.
From our description of $(\cc X/J)^+$, we can assume that $w_{mk} = q(z_{mk})$, with $z_{mk} \in \cc X^+$. Then $v_m = q_n(\sum_k b_{mk} \otimes w_{mk})$, and $\sum_k b_{mk} \otimes w_{mk} \in S_\infty^n(\cc X)^+$, so $v_m \in S_\infty^n(\MAX(\cc X)/J)^+$, and therefore, $v \in S_\infty^n(\MAX(\cc X)/J)^+$.
\end{proof}

\begin{defn}\label{max-nice}
We say that an ordered Banach space $\cc X$ is \emph{$\MAX$-nice} if for every $x \in \cc X$ with $\|x\| < 1$ and every $\vr > 0$ there exist $x_1, \ldots, x_n \in \cc X^+$ and $\xi_1, \ldots, \xi_n, \eta_1, \ldots, \eta_n \in \C$ so that $\|\sum_i |\xi_i|^2 x_i\|, \|\sum_i |\eta_i|^2 x_i\| < 1$, and $\|x - \sum_i \xi_i \overline{\eta_i} x_i\| < \vr$.
\end{defn}

\begin{rmk}
     $\cc X_h$ being $1$-generating means that, for every $x \in \cc X_h$ with $\|x\| < 1$, there exists $y \in \cc X^+$ so that $\pm x \leq y$, and $\|y\| < 1$. Let $x_1 = (y+x)/2$, $x_2 = (y-x)/2$, $\xi_1 = \eta_1 = \xi_2 = 1$, and $\eta_2 = -1$. Then $x_1, x_2 \in \cc X^+$, $\sum_i |\xi_i|^2 x_i = \sum_i |\eta_i|^2 x_i = y$ (hence both have the norm less than $1$), and $\sum_i \xi_i \overline{\eta_i} x_i = x_1 - x_2 = x$ -- thus, the conditions of $\MAX$-niceness hold.
\end{rmk}

\begin{prop}
    Suppose $\cc X$ is an ordered Banach space. Then $\MAX(\cc X)$ is $1$-matricial generating iff $\cc X$ is $\MAX$-nice.
\end{prop}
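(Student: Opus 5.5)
We prove the two implications separately. The forward implication reads off $\MAX$-niceness from the first matrix level via \Cref{cone in max}. The converse is obtained by duality: it reduces to the $\MIN$ structure on $\cc X^\flat$ and \Cref{min structure on lattices}.

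\emph{$\MAX(\cc X)$ $1$-matricial generating $\Rightarrow$ $\cc X$ $\MAX$-nice.} Fix $x \in \cc X$ with $\|x\| < 1$ and $\vr > 0$. Apply $1$-matricial generation at level $n=1$ to get $u_1, u_2 \in \cc X^+$ with $\|u_1\| \vee \|u_2\| < 1$ and $\begin{pmatrix} u_1 & x \\ x^* & u_2 \end{pmatrix} \in S_\infty^2(\MAX(\cc X))^+$.
\begin{itemize}
\item By \Cref{cone in max}, this matrix is a norm limit of finite sums $\sum_k a_k \otimes y_k$ with $a_k \in S_\infty^{2+}$ and $y_k \in \cc X^+$.
\item Split each $a_k$ into rank-one positive matrices $v v^*$ with $v = (\xi,\eta)^T$. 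Each rank-one piece contributes $\begin{pmatrix} |\xi|^2 & \xi\overline{\eta} \\ \eta \overline{\xi} & |\eta|^2\end{pmatrix} \otimes y_k$.
\item After reindexing, the approximant therefore has diagonal entries $\sum_i |\xi_i|^2 x_i$ and $\sum_i |\eta_i|^2 x_i$, and $(1,2)$ entry $\sum_i \xi_i \overline{\eta_i} x_i$, with $x_i \in \cc X^+$.
\item The $\MAX$ norm at level $2$ dominates the norms of the entries, by Ruan's axiom (1) applied to matrix units. For a close enough approximant, the diagonal entries thus have norm $<1$ (since $\|u_j\| < 1$ strictly), and the off-diagonal entry is $\vr$-close to $x$.
\end{itemize}
This is exactly $\MAX$-niceness.

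\emph{$\cc X$ $\MAX$-nice $\Rightarrow$ $\MAX(\cc X)$ $1$-matricial generating.} $\cc X$ is complete, so by \Cref{nor to gen}(2) it suffices to show that the dual $\MAX(\cc X)^\flat$ is $1$-matricial normal. As operator spaces, $\MAX(\cc X)^\flat = \MIN(\cc X^\flat)$ (standard). I will check that the dual matricial order is the $\MIN$ order on $\cc X^\flat$.
\begin{itemize}
\item By \Cref{cone in max} and \Cref{different positivity}, a self-adjoint $x^\flat \in S_\infty^n(\cc X^\flat)$ is positive in the dual iff it pairs nonnegatively with all $a \otimes y$, where $a \in S_\infty^{m+}$ and $y \in \cc X^+$.
\item That condition is equivalent to $\langle \xi | x^\flat | \xi \rangle \in \cc X^{\flat+}$ for all $\xi \in \ell_2^n$, which is the $\MIN$ positivity of \Cref{min-structure}.
\end{itemize}
By \Cref{min structure on lattices}, it then remains to show that $\cc X^\flat$ is $\MIN$-nice.

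Let $x_1^\flat, x_2^\flat \ge 0$ and $x^\flat$ satisfy \eqref{eq:min-nice}, and let $y \in \cc X^+$. Evaluating \eqref{eq:min-nice} at $y$ and optimizing over $t,s,\omega$ gives
\[
|\langle x^\flat, y\rangle| \le \sqrt{\langle x_1^\flat, y\rangle \langle x_2^\flat, y \rangle}.
\]
Now take $x \in \cc X$ with $\|x\| < 1$ and approximate it by $\sum_i \xi_i \overline{\eta_i} x_i$ as in \Cref{max-nice}. The displayed bound and Cauchy--Schwarz give
\[
\Big|\Big\langle x^\flat, \sum_i \xi_i\overline{\eta_i} x_i\Big\rangle\Big| \le \Big\langle x_1^\flat, \sum_i |\xi_i|^2 x_i\Big\rangle^{1/2} \Big\langle x_2^\flat, \sum_i |\eta_i|^2 x_i\Big\rangle^{1/2} \le \|x_1^\flat\| \vee \|x_2^\flat\|.
\]
Letting $\vr \to 0$ yields $\|x^\flat\| \le \|x_1^\flat\| \vee \|x_2^\flat\|$, so $\cc X^\flat$ is $\MIN$-nice.

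The main obstacle is the identification of $\MAX(\cc X)^\flat$ with $\MIN(\cc X^\flat)$ as matricial order operator spaces. The norm part is standard; the order part needs the bookkeeping with adjoints and the $\mp{\cdot}{\cdot}$ convention to be checked carefully. If this becomes awkward, the fallback is a direct argument: separate a hypothetical failure of $1$-generation at level $n$ via \Cref{hermitian separation}. The resulting functional lies in $S_\infty^{2n}(\cc X^\flat)$, and the Cauchy--Schwarz estimate above would then be run on its $2\times 2$ compressions to reach a contradiction.
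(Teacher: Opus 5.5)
Your forward implication is the paper's argument: work at level $1$, apply \Cref{cone in max}, and split into rank-one pieces. For the converse your proof is correct but takes a genuinely different, duality-based route.

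The paper argues directly. It uses Pisier's factorization $x = a\cdot z\cdot b^*$ of an element of the open unit ball of $S_\infty^n(\MAX(\cc X))$, with $z$ diagonal and $\|a\|,\|b\|<1$. It applies $\MAX$-niceness to each diagonal entry of $z$ to build a positive $2\times 2$ block matrix with small diagonal corners, and conjugates by $\mathrm{diag}(a,b)$. Finally, a geometric-series iteration that uses completeness removes the approximation error.

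You instead dualize:
\begin{itemize}
\item \Cref{nor to gen}(2) absorbs both the completeness iteration and the matricial separation argument.
\item The identification of $\MAX(\cc X)^\flat$ with $\MIN(\cc X^\flat)$ as matricial order operator spaces is already proved in the paper, in the proposition right after \Cref{use predual}. Its proof does not use the present statement, so your ``main obstacle'' and the fallback are unnecessary.
\item \Cref{min structure on lattices} then reduces everything to $\MIN$-niceness of $\cc X^\flat$.
\end{itemize}
Your scalar bound $|\langle x^\flat,y\rangle|\le(\langle x_1^\flat,y\rangle\langle x_2^\flat,y\rangle)^{1/2}$ for $y\in\cc X^+$, followed by Cauchy--Schwarz, is correct. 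It uses that positive elements are self-adjoint, which is implicit in the setup. It even gives the stronger estimate $\|x^\flat\|\le(\|x_1^\flat\|\,\|x_2^\flat\|)^{1/2}$.

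The trade-off is as follows. The paper's proof is constructive and independent of the matricial Hahn--Banach machinery. Yours is shorter once that machinery is granted, and it yields more. Your chain of implications can be run backwards: $\cc X^\flat$ $\MIN$-nice gives $\MIN(\cc X^\flat)=\MAX(\cc X)^\flat$ $1$-normal, hence $\MAX(\cc X)$ $1$-generating, hence $\cc X$ $\MAX$-nice by your forward step. So you actually prove that $\cc X$ is $\MAX$-nice iff $\cc X^\flat$ is $\MIN$-nice. In particular, \Cref{lattice MAX nice} follows from \Cref{lattice MIN nice} applied to the dual lattice, which is exactly the duality deduction the paper only speculates about.
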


\begin{proof}
If $\MAX(\cc X)$ is $1$-matricial generating, then for any $x \in \cc X$ with $\|x\| < 1$ there exists a positive $\begin{pmatrix} y_1 & x \\ x^* & y_2 \end{pmatrix}$ so that $\|y_1\|, \|y_2\| < 1$.
By \Cref{cone in max}, for any $\delta > 0$ there exist $x_1, \ldots, x_n \in \cc X^+$ and vectors $\begin{pmatrix} \xi_i \\ \eta_i \end{pmatrix} \in \C^2$ so that
$$
\norm{\begin{pmatrix} y_1 & x \\ x^* & y_2 \end{pmatrix} - \sum_i \begin{pmatrix} \xi_i \\ \eta_i \end{pmatrix} \big( \overline{\xi_i}, \overline{\eta_i} \big) \otimes x_i} < \delta . 
$$
One can check that, for $\delta$ small enough, $(x_i), (\xi_i), (\eta_i)$ are as in the definition of being $\MAX$-nice. Note that this direction does not require the completeness of $\cc X$.

Conversely, suppose $\cc X$ is $\MAX$-nice, and consider $x \in S_\infty^n(\cc X)$ with $\|x\| < 1$. By \cite[Theorem 3.1]{Pisier-Op-Spaces}, we can find a ``diagonal'' $z = \sum_{i=1}^N E_{ii} \otimes z_i \in S_\infty^N(\cc X)$ and $n \times N$ matrices $a, b \in S_\infty^{n,N}$ so that $\|a\|, \|b\| < 1$, $\max_i \|z_i\| < 1$, and $x = a \cdot y \cdot b^* = \sum_{i=1}^N a E_{ii} b^* \otimes z_i$.

By the definition of $\MAX$-niceness, for each $\delta > 0$ and $i$ there exist $\xi_{ik}, \eta_{ik} \in \C$ and $x_{ik} \in \cc X^+$ ($1 \leq k \leq K_i$) so that $\|\sum_k |\xi_{ik}|^2 x_{ik}\|, \|\sum_k |\eta_{ik}|^2 x_{ik}\| < 1$, and $\|z_i - \sum_k \xi_{ik} \overline{\eta_{ik}} x_{ik}\| < \delta$. 

Let
$$
u_1  = \sum_{i=1}^N E_{ii} \otimes \sum_k |\xi_{ik}|^2 x_{ik} ,  \, \, u_2  = \sum_{i=1}^N E_{ii} \otimes \sum_k |\eta_{ik}|^2 x_{ik} ,  \, \, u  = \sum_{i=1}^N E_{ii} \otimes \sum_k \xi_{ik} \overline{\eta_{ik}} x_{ik} ,
$$
then
$$
\widetilde{u} = \begin{pmatrix} u_1 & u \\ u^* & u_2 \end{pmatrix} = \sum_{i=1}^N E_{ii} \otimes \sum_k \begin{pmatrix} |\xi_{ik}|^2 & \xi_{ik} \overline{\eta_{ik}} \\ \overline{\xi_{ik}} \eta_{ik} & |\eta_{ik}|^2 \end{pmatrix} \otimes x_{ik} \in S_\infty^{2N}(\MAX(\cc X))^+ .`
$$
Note that $\|y - u\| = \max_i \|z_i - \sum_k \xi_{ik} \overline{\eta_{ik}} x_{ik}\| < \delta$, while $\|u_1\|, \|u_2\| < 1$. Consequently,
$$
\begin{pmatrix} v_1 & v \\ v^* & v_2 \end{pmatrix} := \begin{pmatrix} a & 0 \\ 0 & b \end{pmatrix} \cdot \widetilde{u} \cdot \begin{pmatrix} a & 0 \\ 0 & b \end{pmatrix}^* = \sum_{k,i} \begin{pmatrix} |\xi_{ik}|^2 a E_{ii} a^* & \xi_{ik} \overline{\eta_{ik}} a E_{ii} b^* \\ \overline{\xi_{ik}} \eta_{ik} b E_{ii} a^* & |\eta_{ik}|^2 b E_{ii} b^* \end{pmatrix} \otimes x_{ik}  \in S_\infty^{2n}(\MAX(\cc X))^+ ,
$$
with $\|x-v\| < \delta$, and $\|v_1\|, \|v_2\| < 1$.

Now fix $\vr \in (0,1/2)$ and $x \in \ball(S_\infty^n(\MAX(\cc X)))$. We need to find $x_1, x_2 \in S_\infty^n(\MAX(\cc X))$ so that $\begin{pmatrix} x_1 & x \\ x^* & x_2 \end{pmatrix} \in S_\infty^{2n}(\MAX(\cc X))^+$, and $\|x_1\|, \|x_2\| < 1+\vr$. As $\cc X$ is complete, we can rely on an iterative procedure for this purpose. 

Begin by finding (in a manner described above) $z_0, x_{01}, x_{02} \in S_\infty^n(\MAX(\cc X))$ so that $\begin{pmatrix} x_{01} & z_0 \\ x_{00}^* & x_{02} \end{pmatrix} \in S_\infty^{2n}(\MAX(\cc X))^+$, $\|x_{01}\|, \|x_{02}\| < 1$, and $\|y_0\| < \vr/2$, where $y_0 = x - z_0$.

Now find $z_1, x_{11}, x_{12} \in S_\infty^n(\MAX(\cc X))$ so that $\begin{pmatrix} x_{11} & z_1 \\ z_1^* & x_{12} \end{pmatrix} \in S_\infty^{2n}(\MAX(\cc X))^+$, $\|x_{11}\|, \|x_{12}\| < \vr/2$, and $\|y_1\| < \vr/2^2$, where $y_1 = y_0 - z_1$.
Proceeding further in the same manner, the $k$-th step gives us $z_k, x_{k1}, x_{k2} \in S_\infty^n(\MAX(\cc X))$ so that $\begin{pmatrix} x_{k1} & z_k \\ z_k^* & x_{k2} \end{pmatrix} \in S_\infty^{2n}(\MAX(\cc X))^+$, $\|x_{k1}\|, \|x_{k2}\| < \vr/2^k$, and $\|y_k\| < \vr/2^{k-1}$, where $y_k = y_{k-1} - z_k$.
Then $x = \sum_{k=0}^\infty z_k$, and one can check that $x_\ell = \sum_{k=0}^\infty x_{k\ell}$ ($\ell = 1,2$) have the desired properties.
\end{proof}

For dual spaces, we can characterize membership in the positive cone using adjoint operators. 

\begin{lem}\label{use predual}
    Suppose $\cc Y$ is an ordered Banach space, and $\cc Y^+$ is generating. Then a self-adjoint $x \in S_\infty^n(\MAX(\cc Y^\flat))^+$ if and only if $I \otimes U^\flat(x) \in S_\infty^{n^2+}$ for any positive $U : S_1^n \to \cc Y$. 
\end{lem}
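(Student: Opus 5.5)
By \Cref{max control dimension}, applied to the ordered Banach space $\cc Y^\flat$, a self-adjoint $x \in S_\infty^n \otimes \cc Y^\flat$ lies in $S_\infty^n(\MAX(\cc Y^\flat))^+$ iff $I \otimes V(x) \in S_\infty^{n^2+}$ for every positive $V : \cc Y^\flat \to S_\infty^n$. So the lemma reduces to showing that, in this test, it suffices to use only weak$^*$ continuous positive $V$, namely those of the form $V = U^\flat$ with $U : S_1^n \to \cc Y$ positive. Here we use the identification $(S_1^n)^\flat = S_\infty^n$ and $B(S_1^n, \cc Y) \ni U \mapsto U^\flat \in B(\cc Y^\flat, S_\infty^n)$. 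The ``only if'' direction is immediate: if $U \ge 0$, then $U^\flat$ maps $\cc Y^{\flat+}$ into $(S_1^n)^{\flat+} = S_\infty^{n+}$, so $U^\flat$ is one of the admissible $V$.

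For the converse, fix a positive $V : \cc Y^\flat \to S_\infty^n$. Its preadjoint-type restriction $T := V^\flat|_{S_1^n} : S_1^n \to \cc Y^{\flat\flat}$ is positive, since for $a \in S_1^{n+}$ and $y^\flat \in \cc Y^{\flat+}$ we have $\langle Ta, y^\flat\rangle = \langle V y^\flat, a \rangle \ge 0$. Write $x = \sum_{k=1}^N b_k \otimes y_k^\flat$; only finitely many functionals $F = \Span\{y_k^\flat\} \subset \cc Y^\flat$ are involved. We now invoke \Cref{p:into cone} with $E = S_1^n$ (finite dimensional and normal, since its positive cone is a cone) and $\cc X = \cc Y$ (generating by hypothesis). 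For each $\vr>0$ this produces a positive $U_\vr : S_1^n \to \cc Y$ with $|\langle Ta, f\rangle - \langle f, U_\vr a\rangle| \le \vr\|a\|\|f\|$ for all $a \in S_1^n$ and $f \in F$.

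Consequently $U_\vr^\flat(y_k^\flat) \to V(y_k^\flat)$ in $S_\infty^n$ for every $k$, because the pairing of $U_\vr^\flat y_k^\flat$ with $a \in S_1^n$ is $\langle y_k^\flat, U_\vr a\rangle$, which is $\vr$-close to $\langle V y_k^\flat, a\rangle$ uniformly on the unit ball of $S_1^n$. Hence
\[
I\otimes U_\vr^\flat(x) = \sum_k b_k \otimes U_\vr^\flat y_k^\flat \longrightarrow \sum_k b_k \otimes V y_k^\flat = I \otimes V(x).
\]
By hypothesis each $I\otimes U_\vr^\flat(x)$ lies in the closed cone $S_\infty^{n^2+}$, so the limit $I \otimes V(x)$ does as well. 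Since $V$ was an arbitrary positive map, $x \in S_\infty^n(\MAX(\cc Y^\flat))^+$ by \Cref{max control dimension}.

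The main obstacle is the approximation step, in which a positive map into the bidual must be replaced by a \emph{positive} map into $\cc Y$ itself. This is exactly where the generating hypothesis on $\cc Y^+$ is needed, since it is what makes \Cref{p:into cone} applicable. One subtlety to check is that the hypotheses of \Cref{p:into cone} are stated for real ordered spaces, while here the spaces are complex with involution. I would apply it to the Hermitian parts, $(S_1^n)_h \to \cc Y_h$, and then extend complex-linearly. To do this I would first verify that $V$ is involution-preserving, so that $T$ maps $(S_1^n)_h$ into $\cc Y^{\flat\flat}_h$; this follows from positivity together with the fact that the dual cone generates the Hermitian part, as in the remark after \Cref{bipositivity}.
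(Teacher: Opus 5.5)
Your argument is correct and is essentially the paper's proof. Both reduce, via \Cref{max control dimension}, to testing against a positive $V:\cc Y^\flat\to S_\infty^n$, and both then use \Cref{p:into cone} to replace the positive map $V^\flat : S_1^n\to\cc Y^{\flat\flat}$ by a positive $U:S_1^n\to\cc Y$ that agrees with it up to $\vr$ on the finitely many functionals occurring in $x$. The paper argues by contradiction with a fixed test vector $\xi$, whereas you pass to a limit in the closed cone $S_\infty^{n^2+}$; these amount to the same thing.

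One small correction to your last paragraph concerns why $V$ preserves the involution. Generation of $\cc Y^+$ does not make $\cc Y^{\flat+}$ generating; it only makes $\cc Y^\flat$ normal. So you should either take involution-preservation as part of the convention for positive maps, as in \Cref{bipositivity}, or reduce to that case by splitting $V$ into its involution-preserving ``real'' and ``imaginary'' parts. The imaginary part vanishes on $\cc Y^{\flat+}$, and the paper's proof passes over this point as well.
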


\begin{proof}
    The ``only if'' part of the statement is straightforward, since $U$ is positive iff $U^\flat$ is.

    To establish the ``if'' part, denote for convenience $\cc X = \cc Y^\flat$. Suppose a self-adjoint $x \in S_\infty^n \otimes \cc X$ (which therefore lies in $(S_\infty^n)_h \otimes \cc X_h$) is not positive in $S_\infty^n(\MAX(\cc X))$. We need to find $U : S_1^n \to \cc Y$ so that $I \otimes U^\flat(x)$ is not positive.
    
    Per \Cref{max control dimension}, we can find a positive $T : \cc X \to S_\infty^n$ so that $I \otimes T(x)$ is not positive. That is, there exists $\xi = \sum_j \eta_j \otimes \zeta_j \in \ell_2^{n^2}$ ($\eta_j, \zeta_j \in \ell_2^n$) so that $\langle \xi | (I \otimes T) x | \xi \rangle < 0$.
    Write $x = \sum_i a_i \otimes x_i$ ($a_i \in (S_\infty^n)_h, x_i \in \cc X_h$), then
    $$
    0 > \sum_{i,j,k} \langle \eta_j | a_i | \eta_k \rangle \langle \zeta_j | T x_i | \zeta_k \rangle = \sum_{i,j,k} \langle \eta_j | a_i | \eta_k \rangle \big\langle  T x_i , |\zeta_k \rangle \langle \zeta_j | \big\rangle = \sum_{i,j,k} \langle \eta_j | a_i | \eta_k \rangle \big\langle T^\flat (|\zeta_k \rangle \langle \zeta_j |) , x_i\big\rangle .
    $$
Above, $\big\langle  T x_i , |\zeta_k \rangle \langle \zeta_j | \big\rangle$ represents the action of $T x_i \in S_\infty^n$ on the rank one operator $|\zeta_k \rangle \langle \zeta_j | \in S_1^n$; then $T^\flat (|\zeta_k \rangle \langle \zeta_j |)$ is an element of $\cc X^\flat$, and $\big\langle T^\flat (|\zeta_k \rangle \langle \zeta_j |) , x_i\big\rangle$ is its action on $x_i \in \cc X$.
By \Cref{p:into cone}, there exists a positive $U : S_1^n \to \cc Y$ ``approximating'' $T^\flat$, in the sense
$$\sum_{i,j,k} \langle \eta_j | a_i | \eta_k \rangle \big\langle x_i , U (|\zeta_k \rangle \langle \zeta_j |) \big\rangle < 0,$$ or equivalently, $\langle \xi | (I \otimes U^\flat) x | \xi \rangle < 0$. Thus, $U$ has the desired properties.
\end{proof}

We close this section by establishing ``min-max duality.''

\begin{prop}\label{max*=min}
If $\cc X$ is an ordered Banach space, then $\MAX(\cc X)^\flat = \MIN(\cc X^\flat)$ as matricial order operator spaces.
\end{prop}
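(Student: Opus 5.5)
The claim has two components: the matricial norms and the matricial positive cones. I will handle them separately.

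For the norms, I will use the classical operator space duality $\MAX(\cc X)^\flat = \MIN(\cc X^\flat)$ completely isometrically (see \cite[Section 3.3]{effros-ruan-book} or \cite[Section 3]{Pisier-Op-Spaces}). Concretely, $x^\flat \in S_\infty^n(\cc X^\flat)$ is identified with the operator $T_{x^\flat} : \cc X \to S_\infty^n$, $x \mapsto \mp{x^\flat}{x}$. Its norm in $S_\infty^n(\MAX(\cc X)^\flat)$ is the c.b.\ norm of $T_{x^\flat}$ on $\MAX(\cc X)$. By the universal property of $\MAX$, this equals $\|T_{x^\flat}\|$, which is exactly the injective norm $\|x^\flat\|_{S_\infty^n(\MIN(\cc X^\flat))}$. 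The same correspondence respects the involution: $(x^\flat)^*$ corresponds to $x \mapsto T_{x^\flat}(x^*)^*$. Hence self-adjoint $x^\flat$ correspond to involution-preserving maps.

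For the order, fix a self-adjoint $x^\flat \in S_\infty^n(\cc X^\flat)$.

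\emph{$\MIN$ positivity implies dual positivity.} By \Cref{min-structure}, $x^\flat \in S_\infty^n(\MIN(\cc X^\flat))^+$ means $\langle \xi | x^\flat | \xi\rangle \in \cc X^{\flat+}$ for all $\xi \in \ell_2^n$. Equivalently, $T_{x^\flat}$ is a positive map $\cc X \to S_\infty^n$, since $\langle \xi | T_{x^\flat}(x) | \xi \rangle = \langle \langle \xi|x^\flat|\xi\rangle, x\rangle$. By the definition of the dual wedge (\Cref{ss:duality}), $x^\flat \in S_\infty^n(\MAX(\cc X)^\flat)^+$ iff $\mp{x^\flat}{y} \geq 0$ for every $y \in S_\infty^m(\MAX(\cc X))^+$. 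This quantity is $(I_{S_\infty^m}\otimes T_{x^\flat})(y)$, up to the canonical shuffle of indices, which preserves positivity. By \Cref{cone in max}, every such $y$ is a limit of sums $\sum_i a_i \otimes x_i$ with $a_i \in S_\infty^{m+}$ and $x_i \in \cc X^+$. For these sums, $\sum_i a_i \otimes T_{x^\flat}(x_i) \geq 0$ as soon as $T_{x^\flat}$ is positive. Continuity of $T_{x^\flat}$ and closedness of $S_\infty^{mn+}$ then give the implication.

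\emph{Dual positivity implies $\MIN$ positivity.} Take $m=1$ and $y = x \in \cc X^+ = S_\infty^1(\MAX(\cc X))^+$. This yields $T_{x^\flat}(x) = \mp{x^\flat}{x} \geq 0$, so $T_{x^\flat}$ is positive. Therefore $\langle\xi|x^\flat|\xi\rangle \in \cc X^{\flat+}$ for every $\xi$, which is membership in $S_\infty^n(\MIN(\cc X^\flat))^+$.

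The main point requiring care is the identification $\mp{x^\flat}{y} \leftrightarrow (I\otimes T_{x^\flat})(y)$. It involves a permutation of the $mn$ indices, i.e.\ a unitary conjugation, so positivity is unaffected. The other point needing care is self-adjointness in the $\MIN$ wedge, which is part of the definition and is matched on the dual side by requiring $x^\flat$ self-adjoint. Beyond these checks, everything reduces to \Cref{cone in max} and the classical isometric duality. No local reflexivity is needed here; that tool would only enter for the reverse duality $\MIN(\cc X)^\flat$ versus $\MAX(\cc X^\flat)$.
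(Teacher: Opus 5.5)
Your proof is correct and follows the paper's argument. You identify $x^\flat$ with $T_{x^\flat}:\cc X\to S_\infty^n$, observe that $\MIN$-positivity of $x^\flat$ is exactly positivity of $T_{x^\flat}$, obtain dual positivity from the $\MAX$ cone, and recover positivity of $T_{x^\flat}$ by testing at level $m=1$. The differences are cosmetic: you read $\MIN$-positivity directly from $\langle \xi | x^\flat | \xi \rangle \in \cc X^{\flat+}$ instead of passing through the weak$^*$-continuous extension to $\cc X^{\flat\flat}$ and \Cref{p:Banach Alaoglu}, and you invoke \Cref{cone in max} where the definition of $S_\infty^m(\MAX(\cc X))^+$, applied to the positive contraction $T_{x^\flat}/\|T_{x^\flat}\|$, already gives $(I\otimes T_{x^\flat})(y)\geq 0$ without the separation argument behind that lemma.
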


\begin{proof}
It is well known that the matricial norms are in duality. It remains to establish the duality of positive cones -- that is, $x^\flat \in S_\infty^n(\MIN(\cc X^\flat))$ is positive iff $\mp{x^\flat}{x} \in S_\infty^{mn+}$ for any $x \in S_\infty^m(\MAX(\cc X))^+$.

Recall that $x^\flat \in S_\infty^n(\MIN(\cc X^\flat))^+$ iff the corresponding weak$^*$-continuous operator $T : \cc X^{\flat\flat} \to S_\infty^n$ is positive. By \Cref{p:Banach Alaoglu}, this is equivalent to the restriction of this operator to $\cc X$ (which we shall also denote by $T$) being positive.
If this happens, then, by the definition of the positive cone of $\MAX(\cc X)$, $I \otimes T(x) = \mp{x^\flat}{x} \in S_\infty^{mn+}$ for any $x \in S_\infty^m(\MAX(\cc X))^+$.

Conversely, suppose $I \otimes T(x) = \mp{x^\flat}{x} \in S_\infty^{mn+}$ for any $x \in S_\infty^m(\MAX(\cc X))^+$. Then in particular $Tx \in S_\infty^{n+}$ for any $x \in \cc X^+$, so $T$ is positive on $\cc X^+$. As noted above, this gives the positivity of $x^\flat$.
\end{proof}

\begin{prop}\label{mix*=max}
If $\cc X$ is an ordered Banach space, and $\cc X_h$ is generating, then $\MIN(\cc X)^\flat = \MAX(\cc X^\flat)$ as matricial order operator spaces.
\end{prop}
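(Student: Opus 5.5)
The norms are handled exactly as in \Cref{max*=min}: the operator space dual of $\MIN(\cc X)$ is $\MAX(\cc X^\flat)$ completely isometrically (see \cite[Section 3]{Pisier-Op-Spaces} or \cite{Blecher1991tensor}). The work is therefore the duality of the positive cones. We must show that a self-adjoint $x^\flat \in S_\infty^n(\cc X^\flat)$ satisfies $\mp{x^\flat}{x} \in S_\infty^{mn+}$ for all $x \in S_\infty^m(\MIN(\cc X))^+$ if and only if $x^\flat \in S_\infty^n(\MAX(\cc X^\flat))^+$. By \Cref{control dimension} and the remark after the definition of dual positivity, it is enough to test $m = n$ on both sides.

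For the inclusion $S_\infty^n(\MAX(\cc X^\flat))^+ \subset S_\infty^n(\MIN(\cc X)^\flat)^+$, use \Cref{cone in max}. Every element of $S_\infty^n(\MAX(\cc X^\flat))^+$ is a norm limit of finite sums $\sum_i a_i \otimes x_i^\flat$ with $a_i \in S_\infty^{n+}$ and $x_i^\flat \in \cc X^{\flat+}$. Since pairing with a fixed $x$ is norm continuous, it suffices to treat a single term $a \otimes x^\flat$. For $x \in S_\infty^m(\MIN(\cc X))^+$ we have $\mp{a \otimes x^\flat}{x} = a \otimes (I \otimes x^\flat)(x)$. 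Here $(I\otimes x^\flat)(x) \in S_\infty^{m+}$, because $\langle \xi | x | \xi \rangle \in \cc X^+$ for every $\xi$ and $x^\flat$ is positive; equivalently, this is \Cref{min passes to subspace}(2) applied to the positive functional $x^\flat$. The tensor product of two positive matrices is positive, so this direction holds.

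For the reverse inclusion, take a self-adjoint $x^\flat$ that is positive on $\MIN(\cc X)$, and show $x^\flat \in S_\infty^n(\MAX(\cc X^\flat))^+$. The plan is to use \Cref{use predual} with $\cc Y = \cc X$; this is where the hypothesis that $\cc X_h$ is generating enters. By that lemma it suffices to check $(I \otimes U^\flat)(x^\flat) \in S_\infty^{n^2+}$ for every positive $U : S_1^n \to \cc X$. Such a $U$ corresponds to an element $u \in S_\infty^n \otimes \cc X$. The key observation is that positivity of $U$ as a map $S_1^n \to \cc X$ is exactly membership $u \in S_\infty^n(\MIN(\cc X))^+$: this is the ``injective cone'' description following \Cref{min-structure}, i.e.\ $\oper{u}$ positive, possibly after a transpose. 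Unwinding the pairings, $(I\otimes U^\flat)(x^\flat)$ coincides with $\mp{x^\flat}{u}$ up to a flip of tensor factors and/or transposing the $S_\infty^n$ factor. The assumption on $x^\flat$ then gives positivity.

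The main obstacle is this bookkeeping. We must check that the identification $U \leftrightarrow u$ sends positive maps to positive elements of $S_\infty^n(\MIN(\cc X))$. We must also check that the transposes involved do not destroy positivity. Transposition on $S_\infty^n$ is not completely positive, but a transpose applied to the $S_1^n$ variable together with the matching transpose in the identification should cancel. If it does not, I would replace $U$ by $U \circ t$, which is still positive since transposition preserves $S_1^{n+}$. A secondary point is that \Cref{use predual} relies on \Cref{p:into cone}, which needs $S_1^n$ normal and $\cc X$ generating. Both hold here: $S_1^n$ is finite dimensional with a pointed cone, and $\cc X_h$ is generating by hypothesis. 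So that lemma applies directly.
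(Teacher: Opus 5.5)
Your proposal is correct and follows essentially the paper's argument: the norm duality is quoted as known, and the cone duality comes from \Cref{use predual} (so $\MAX(\cc X^\flat)$-positivity only needs to be tested against $U^\flat$ with $U : S_1^n \to \cc X$ positive), combined with the identification of such positive $U$ with elements of $S_\infty^n(\MIN(\cc X))^+$. The differences are cosmetic. You derive the easy inclusion from \Cref{cone in max}, whereas the paper reads it directly off the definition of the $\MAX$ cone, which tests all positive maps and in particular the weak$^*$-continuous ones. Your transpose concern is harmless: using the trace pairing consistently produces no transpose, and in any case the $\MIN$ cone is invariant under transposing the scalar factor, since $\langle \xi | (t \otimes I_{\cc X})(u) | \xi \rangle = \langle \overline{\xi} | u | \overline{\xi} \rangle$ for the transpose $t$.
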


\begin{proof}
Again, the duality between matricial norms is well known. To show that the cones in question are dual to each other, recall that $x^\flat \in S_\infty^n(\MIN(\cc X)^\flat)^+$ iff for any positive weak$^*$-continuous $S : \cc X^\flat \to S_\infty^m$ (corresponding to a positive element of $S_\infty^m(\MIN(\cc X))$), we have $I \otimes S(x^\flat) \in S_\infty^{mn+}$.
On the other hand, by the definition of the positive cone in $\MAX$ spaces, $x^\flat \in S_\infty^n(\MAX(\cc X^\flat))^+$ iff for any positive $T : \cc X^\flat \to S_\infty^m$ we have $I \otimes T(x^\flat) \in S_\infty^{mn+}$. By \Cref{use predual}, we can take $T$ to be weak$^*$ continuous.
\end{proof}

\section{Examples: $C^*$-algebras and Schatten spaces}\label{ss:examples} 
We provide a few examples of operator spaces which are regular, that is, normal and generating.

\begin{prop}\label{op sys}
Any operator system is $1$-matricial normal and $1$-matricial generating.
\end{prop}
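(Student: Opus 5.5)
We will treat an operator system concretely: $\cc X \subset B(H)$ is self-adjoint, contains the identity $1$, and $S_\infty^n(\cc X) \subset B(\ell_2^n(H))$ carries the inherited norms and cones. We then use \Cref{check-selfadj} to reduce both claims to statements about self-adjoint $u \in S_\infty^n(\cc X)_h$. Since $S_\infty^n(\cc X)$ is again an operator system inside $B(\ell_2^n(H))$ with unit $I_n \otimes 1$, it suffices to argue at a single level $n$. We may therefore rename $S_\infty^n(\cc X)$ as $\cc X$ and work inside $B(K)$, $K = \ell_2^n(H)$.

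\emph{Normality.} Suppose $u = u^*$, $v \geq 0$ and $-v \leq u \leq v$ in $B(K)$. We aim for $\|u\| \leq \|v\|$. For a unit vector $\xi$, the order relation gives $|\langle \xi | u | \xi\rangle| \leq \langle \xi | v | \xi \rangle \leq \|v\|$. Because $u$ is self-adjoint, its norm equals its numerical radius $\sup_\xi |\langle \xi|u|\xi\rangle|$, and hence $\|u\| \leq \|v\|$. By \Cref{check-selfadj}(1), this yields $1$-matricial normality. Alternatively, one can bypass \Cref{check-selfadj}: positivity of $\begin{pmatrix} u_1 & u \\ u^* & u_2\end{pmatrix}$ gives $u = u_1^{1/2} c\, u_2^{1/2}$ with $\|c\| \leq 1$, so $\|u\| \leq \|u_1\|^{1/2}\|u_2\|^{1/2} \leq \|u_1\| \vee \|u_2\|$. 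This factorization is a standard fact about positive $2\times 2$ block operators.

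\emph{Generation.} Given $u \in S_\infty^n(\cc X)_h$, we take $v = \|u\|\,(I_n \otimes 1)$. This element lies in $S_\infty^n(\cc X)$ because $1 \in \cc X$. Since $-\|u\| \leq u \leq \|u\|$ in $B(K)$, we have $v \pm u \geq 0$ in $B(K)$. Both $v \pm u$ lie in $S_\infty^n(\cc X)$, so they belong to the inherited cone. Also $\|v\| = \|u\|$. By \Cref{check-selfadj}(2), this gives $1$-matricial generation, indeed with $\vr = 0$, i.e.\ strong generation. Equivalently, at the $2\times 2$ level, $\begin{pmatrix} \|u\| & u \\ u^* & \|u\|\end{pmatrix} \geq 0$ for arbitrary $u$.

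The only point needing care is the identification of an abstract operator system with a concrete one, so that norms and cones at all matrix levels are those inherited from $B(\ell_2^n(H))$. We take this from the Choi--Effros representation \cite{choi1977injectivity}, which the paper has already invoked. Beyond that, the steps are routine.
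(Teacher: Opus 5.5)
Your proposal is correct and follows the same route as the paper. You realize the operator system concretely in $B(H)$ and inherit normality from $B(\ell_2^n(H))$, which you spell out via the numerical radius or the factorization $u = u_1^{1/2} c\, u_2^{1/2}$ where the paper just says ``inherited.'' You then get generation, in fact strong generation with $\vr = 0$, from positivity of $\begin{pmatrix} \|u\|\, I_n \otimes 1 & u \\ u^* & \|u\|\, I_n \otimes 1 \end{pmatrix}$, using that the unit lies in $\cc X$, exactly as in the paper's sketch.
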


\begin{proof}[Sketch of a proof]
Any operator system $\cc X$ can be viewed as embedded into $B(H)$. Normality is is inherited from $B(H)$. To show that $\cc X$ is generating, recall that, for $x \in S_\infty^n(\cc X)$,
$$
\|x\| = \inf \Big\{ \lambda : \begin{pmatrix}  \lambda I_n \otimes I_H & x  \\  x^* & \lambda I_n \otimes I_H  \end{pmatrix} \geq 0 \Big\} ,
$$
where $I_H$ stands for the identity in $B(H)$.
\end{proof}

\begin{prop}\label{C*-alg}
Any $C^*$-algebra is $1$-matricial normal and $1$-matricial generating.
\end{prop}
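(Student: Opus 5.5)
Let $\mathcal A$ be a $C^*$-algebra, realized faithfully (and completely isometrically) inside some $B(H)$. Then $S_\infty^n(\mathcal A)=M_n(\mathcal A)$ sits inside $B(\ell_2^n(H))$, with the inherited norms and the usual $C^*$-order; it may be non-unital. If $\mathcal A$ is unital, it is an operator system and \Cref{op sys} applies directly. In general, I will argue at each matrix level inside $B(\ell_2^n(H))$, which reduces both properties to facts about operators.

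\emph{Normality.} Suppose $\begin{pmatrix} u_1 & u \\ u^* & u_2\end{pmatrix}\geq 0$ in $M_{2n}(\mathcal A)\subset B(\ell_2^{2n}(H))$. The classical operator inequality gives $u = u_1^{1/2} k\, u_2^{1/2}$ for some contraction $k$; equivalently, $|\langle \xi| u|\eta\rangle|^2\leq \langle\xi|u_1|\xi\rangle\langle\eta|u_2|\eta\rangle$. Hence
\[
\|u\|\leq \|u_1\|^{1/2}\|u_2\|^{1/2}\leq \|u_1\|\vee\|u_2\| .
\]
So $\mathcal A$ is $1$-matricial normal. This is exactly the statement that normality is inherited from $B(H)$.

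\emph{Generation.} I will use the strong form, with $\vr=0$. Given $u\in M_n(\mathcal A)$, set $u_1=|u^*|=(uu^*)^{1/2}$ and $u_2=|u|=(u^*u)^{1/2}$. Both lie in $M_n(\mathcal A)$ by continuous functional calculus in the $C^*$-algebra $M_n(\mathcal A)$, with no unit needed since $t\mapsto t^{1/2}$ vanishes at $0$, and $\|u_1\|=\|u_2\|=\|u\|$. The remaining point is
\[
\begin{pmatrix} |u^*| & u\\ u^* & |u|\end{pmatrix}\geq 0 .
\]
I plan to show this via the polar decomposition $u=v|u|$ in the bidual, or in $B(H)$. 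Then $|u^*|=v|u|v^*$, and the matrix factors as
\[
\begin{pmatrix} v|u|^{1/2}\\ |u|^{1/2}\end{pmatrix}\begin{pmatrix} |u|^{1/2}v^* & |u|^{1/2}\end{pmatrix},
\]
which is positive in $B(\ell_2^{2n}(H))$. Since all entries lie in $M_n(\mathcal A)$, and positivity in the $C^*$-subalgebra $M_{2n}(\mathcal A)$ agrees with positivity in $B(\ell_2^{2n}(H))$ (spectral permanence), the matrix is positive in $M_{2n}(\mathcal A)$. This gives $1$-strong matricial generation.

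The main obstacle is the non-unital case, where one cannot use $\lambda I$ as in \Cref{op sys}. The polar-decomposition identity is the step needing care: the partial isometry $v$ need not lie in $\mathcal A$, so the argument only produces positivity in $B(H)$ and then transfers it back via spectral permanence. As a fallback, I would instead take $u_1=(uu^*+\delta)^{1/2}$-type approximants built from an approximate unit $(e_\lambda)$ (e.g.\ $\|u\|e_\lambda\otimes I_n$ perturbed), and accept the non-strong version with $\vr>0$, which is all \Cref{d:special ordered properties} requires.
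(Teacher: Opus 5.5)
Your proof is correct and follows the same route as the paper. Normality is inherited from $B(\ell_2^n(H))$ via the Cauchy--Schwarz estimate $\|u\|\le\|u_1\|^{1/2}\|u_2\|^{1/2}$. Generation comes from the positivity of $\begin{pmatrix} (uu^*)^{1/2} & u \\ u^* & (u^*u)^{1/2}\end{pmatrix}$, which the paper simply asserts; your polar-decomposition factorization together with spectral permanence justifies it. So the approximate-unit fallback is unnecessary, and you in fact obtain the strong version with $\vr=0$.
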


\begin{proof}[Sketch of a proof]
Again, normality is straightforward. To establish generation, note that, for $a \in S_\infty^n(\cc A)$ (here, $\cc A$ is a $C^*$-algebra), we have $\begin{pmatrix}  (aa^*)^{1/2} & a  \\  a^* & (a^* a)^{1/2} \end{pmatrix} \geq 0$.
\end{proof}


Next we turn our attention to non-commutative sequence spaces. Fix $p \in [1,\infty)$, and find $q$ so that $1/p + 1/q = 1$. Consider the Schatten space $\cc X = S_p$, with its standard operator space structure \cite{Pisier-Lp}. Equip $\cc X$ with ``parallel duality'' -- that is, for $x \in \cc X$ and $x^\flat \in \cc X^\flat$ (on the Banach space level, $\cc X^\flat$ coincides with $S_q$), define the dual action via $\langle x^\flat , x \rangle = \tra( x^\flat x^T)$, where $x^T$ stands for the transpose of $x$. 

To define positive cones, write a generic $x \in S_\infty^n(\cc X)$ as $ \sum_{i,j=1}^n E_{ij} \otimes x_{ij} \in S_\infty^n(\cc X)$, with $x_{ij} = \sum_{k,\ell=1}^m [x_{ij}]_{k\ell} E_{k\ell}$). We say that $x \in S_\infty^n(\cc X)^+$ if the matrix $\sum [x_{ij}]_{k\ell} E_{ij} \otimes E_{k\ell}$, where the rows and the columns are indexed by pairs $(i,k)$ and $(j,\ell)$, respectively, is positive.
Clearly our requirements are satisfied: (i) if $x \geq 0$ and $-x \geq 0$, then $x=0$; (ii) if $x \geq 0$, then $a \cdot x \cdot a^* \geq 0$ for any matrix $a$, and (iii) if $x \in S_\infty^m(S_p)^+, y \in S_\infty^n(S_p)^+$, then $x \oplus y \in S_\infty^{m+n}(S_p)^+$.

\begin{prop}\label{Sp cone}
In the above notation, $\cc X^\flat = S_q$,  $1/p + 1/q = 1$, with the matricial order on $S_q$ defined as for $S_p$.
\end{prop}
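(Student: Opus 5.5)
The operator space part is standard: with the parallel pairing $\langle x^\flat, x\rangle = \tr(x^\flat x^T)$, the operator space dual of $S_p$ is completely isometric to $S_q$. This is Pisier's duality for the non-commutative $L_p$ spaces, in the form that uses the parallel rather than the trace pairing, so that $S_1 = S_\infty^\flat$ and $S_\infty$ carry their natural structures. So the task reduces to the cones. We must show that $x^\flat \in S_\infty^n(S_q)$ is positive in the dual sense exactly when the block matrix $\sum [x^\flat_{ij}]_{k\ell} E_{ij}\otimes E_{k\ell}$ is positive semidefinite. Here the dual sense means $\mp{x^\flat}{x} \in S_\infty^{nm+}$ for all $x \in S_\infty^m(S_p)^+$, and by the remark following the definition of dual positivity it suffices to take $m=n$. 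By density of finitely supported matrices, and since both cones are closed, we may work throughout with finite matrices, say $x_{ij}\in S_p^N$.

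The key computation identifies $\mp{x^\flat}{x}$. Write $X^\flat = \sum [x^\flat_{ij}]_{k\ell} E_{ij}\otimes E_{k\ell}\in M_n\otimes M_N$, and likewise $X \in M_m \otimes M_N$. Then the $((i,a),(j,b))$ entry of $\mp{x^\flat}{x}$ is
$$\tr(x^\flat_{ij} x_{ab}^T) = \sum_{k,\ell} [x^\flat_{ij}]_{k\ell}[x_{ab}]_{k\ell}.$$
Thus $\mp{x^\flat}{x}$ is a partial contraction of $X^\flat \otimes X$ over the $M_N$ legs against the unnormalized maximally entangled pairing $\sum_{k,\ell}|k\rangle|k\rangle\,\langle \ell|\langle \ell|$. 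Concretely, we take $\Omega = \sum_k |k\rangle\otimes|k\rangle \in \ell_2^N\otimes\ell_2^N$ and $V = I_{n}\otimes I_m\otimes \langle\Omega|$, suitably reordering tensor legs. Then
$$\mp{x^\flat}{x} = V (X^\flat\otimes X) V^*.$$
The transpose in the pairing is exactly what makes this a compression rather than a partial transpose, and this is the step where care with index conventions is needed.

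With this formula the inclusion ``$X^\flat \ge 0 \Rightarrow x^\flat$ dual positive'' is immediate: if $X^\flat, X \ge 0$ then $X^\flat\otimes X\ge 0$, and compressions of positive matrices are positive. For the converse, we test $x^\flat$ against $x = $ the element with $X = |\bar\zeta\rangle\langle\bar\zeta|\otimes$ (a rank-one matrix). More precisely, we take $m=1$ and $x\in S_p$ a rank-one positive matrix $|\bar v\rangle\langle \bar v|$. Then $\mp{x^\flat}{x} = (\tr(x^\flat_{ij}\, (\bar v \bar v^*)^T))_{ij} = (\langle v| x^\flat_{ij}|v\rangle)_{ij}$, up to conjugation conventions. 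This gives positivity of $(I\otimes\langle v|)X^\flat(I\otimes|v\rangle)$ for every $v$, which is not yet enough. Instead we use $m=N$ and $X = |\Omega\rangle\langle\Omega|$, which is positive in $M_N\otimes M_N$; in the paper's block form this means $x_{ab} = E_{ab}$. Then the compression formula gives $\mp{x^\flat}{x} = X^\flat$ up to a permutation (the transpose-trick $\langle\Omega|\,(A\otimes|\Omega\rangle\langle\Omega|)\,|\Omega\rangle$-type identity). Hence dual positivity forces $X^\flat\ge 0$. If one insists on $m=n$ via the remark, we can pad or restrict since $N$ is arbitrary, or simply use the general-$m$ definition.

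The main obstacle is bookkeeping: verifying that, with the parallel duality and the row/column indexing by pairs $(i,k),(j,\ell)$, the test element $x_{ab}=E_{ab}$ (versus $E_{ba}$) really reproduces $X^\flat$ and not its partial transpose, which would not preserve positivity. I would fix this by a direct entrywise check:
$$\tr(x^\flat_{ij}E_{ab}^T) = [x^\flat_{ij}]_{ab},$$
so $\mp{x^\flat}{x}_{(i,a),(j,b)} = [x^\flat_{ij}]_{ab}$, which is exactly $X^\flat$. Finally, I would note that the involutions match: $x^{\flat *}$ defined via $\mp{x^{\flat*}}{x}=\mp{x^\flat}{x^*}^*$ corresponds to the adjoint of $X^\flat$. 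This is again a one-line check with the transpose pairing.
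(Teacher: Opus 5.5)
Your argument is correct and is essentially the paper's. The paper identifies dual positivity of $x^\flat$ with complete positivity of the map $T : S_p \to S_\infty^n$, $x \mapsto \sum_{i,j}\langle x^\flat_{ij}, x\rangle E_{ij}$, and then invokes Choi's theorem, observing that the Choi matrix $C_T$ is a flip-conjugate of $X^\flat$. Your test element $x_{ab}=E_{ab}$ (that is, $X=|\Omega\rangle\langle\Omega|$) produces exactly $C_T$, and your identity $\mp{x^\flat}{x} = V(X^\flat\otimes X)V^*$ is the standard proof of the other half of Choi's theorem, so you have reproved that theorem inline, with the truncation to $P_N$ making the infinite-dimensional case explicit.
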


\begin{proof}
It is well known that such identification holds for operator space structure. To handle the matricial order, consider $x^\flat = (x^\flat_{ij})_{i,j=1}^n = \sum_{i,j=1}^n E_{ij} \otimes x^\flat_{ij} \in S_\infty^n(\cc X^\flat)$. As we did with $x$, write $x^\flat_{ij} = \sum_{k\ell} [x_{ij}^\flat]_{k\ell} E_{k\ell}$, then $x = \sum_{i,j=1}^n \sum_{k\ell} [x_{ij}^\flat]_{k\ell} E_{ij} \otimes E_{k\ell}$ -- that is, we view $x$ as a matrix with entries $\big( [x_{ij}^\flat]_{k\ell} \big)$, with pairs $(i,k)$ and $(j,\ell)$ indexing the rows and columns, respectively.
   
   We shall show that $x^\flat \in S_\infty^n(\cc X^\flat)^+$ iff the corresponding infinite matrix $\big( [x_{ij}^\flat]_{k\ell} \big)$ is positive.
   Indeed, $x^\flat \in S_\infty^n(\cc X^\flat)^+$ iff the operator $T : \cc X \to S_\infty^n : x \mapsto \sum_{i,j=1}^n \langle x_{ij}^\flat , x \rangle E_{ij}$ (where $(E_{ij})$ stand for the matrix units), is completely positive.
   This, in turn, is equivalent to the corresponding ``formal'' Choi matrix $C_T = \sum_{k,\ell=1}^\infty T(E_{k\ell}) \otimes E_{k\ell}$ being positive (see e.g.~\cite[Section 2.3]{AliceBobBanach}). Note that $T(E_{k\ell}) = \sum_{i,j=1}^n [x_{ij}^\flat]_{k\ell} E_{ij}$ (where $[x_{ij}^\flat]_{k\ell}$ denotes the $(k,\ell)$ entry of $x_{ij}^\flat$), hence
   $$ C_T = \sum_{k,\ell=1}^\infty \sum_{i,j=1}^n [x_{ij}^\flat]_{k\ell} E_{ij} \otimes E_{k\ell} = U^* \cdot \Big( \sum_{k,\ell=1}^\infty \sum_{i,j=1}^n [x_{ij}^\flat]_{k\ell} E_{k\ell} \otimes E_{ij} \Big) \cdot U = U^* \cdot x^\flat \cdot U , $$
where $U$ is the ``flip'' map -- that is, $U | j \ell \rangle = |\ell j \rangle$. Clearly $U^* \cdot x^\flat \cdot U$ is positive iff $x^\flat$ is.
\end{proof}

\begin{prop}\label{schatten}
    For $1 \leq p \leq \infty$, the Schatten space $S_p^m$ is $1$-matricial normal and $1$-matricial generating.
\end{prop}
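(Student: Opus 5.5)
For $p=\infty$ the space $S_\infty^m=M_m$ is a $C^*$-algebra, and its matricial order is the one induced by $S_\infty^n(M_m)=M_{nm}$. The claim for $p=\infty$ is therefore \Cref{C*-alg}. For general $p$ I plan to prove $1$-matricial normality directly for every $p\in[1,\infty]$. I will then get $1$-matricial generation for free from duality: \Cref{Sp cone} identifies $(S_p^m)^\flat$ with $S_q^m$, carrying the same type of matricial order. Since $S_p^m$ is finite dimensional, hence complete, \Cref{nor to gen}(2) converts the $1$-matricial normality of $S_q^m$ into $1$-matricial generation of $S_p^m$. By compactness in finite dimensions this should even be strong generation.

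\textbf{Normality.} I use Pisier's formula for the matricial norms of $S_p^m$:
\[
\|x\|_{S_\infty^n(S_p^m)}=\sup\big\{\|(a\otimes I)\,x\,(b\otimes I)\|_{S_p^{nm}} : a,b\in\ball(S_{2p}^n)\big\},
\]
where $x$ is viewed as an $nm\times nm$ matrix. Suppose $\begin{pmatrix} u_1 & u\\ u^* & u_2\end{pmatrix}\ge 0$ in $S_\infty^{2n}(S_p^m)$. By the definition of the order, this is an honest positive block matrix in $M_{2nm}$. The standard factorization then gives $u=u_1^{1/2}k\,u_2^{1/2}$ with $\|k\|_\infty\le 1$.

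For $a,b\in\ball(S_{2p}^n)$, Hölder's inequality gives
\[
\|(a\otimes I)u(b\otimes I)\|_p\le \|(a\otimes I)u_1^{1/2}\|_{2p}\,\|u_2^{1/2}(b\otimes I)\|_{2p}
=\|(a\otimes I)u_1(a^*\otimes I)\|_p^{1/2}\,\|(b^*\otimes I)u_2(b\otimes I)\|_p^{1/2}.
\]
Since $a^*\in\ball(S_{2p}^n)$ as well, Pisier's formula bounds the first factor by $\|u_1\|_n^{1/2}$ and the second by $\|u_2\|_n^{1/2}$. Taking the supremum yields $\|u\|_n\le(\|u_1\|_n\|u_2\|_n)^{1/2}\le\|u_1\|_n\vee\|u_2\|_n$, which is $1$-matricial normality. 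For $p=\infty$ the same computation works with $a=b=1$, and in any case \Cref{C*-alg} covers it.

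\textbf{Generation.} I apply the normality just proved to the exponent $q$ dual to $p$, so $q\in[1,\infty]$. By \Cref{Sp cone}, $S_q^m=(S_p^m)^\flat$ as matricial order operator spaces under the parallel duality. \Cref{nor to gen}(2) then gives that $S_p^m$ is $1$-matricial generating.

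The step I expect to need care is the bookkeeping, not the analysis. First, the order defined via the reshaped matrix $\big([x_{ij}]_{k\ell}\big)$ must be exactly positivity in $M_n\otimes M_m$ under the same identification used in Pisier's formula. Second, the transpose in the parallel duality must not spoil the identification of cones; \Cref{Sp cone} handles this, but I would check that the flip $U$ there is compatible with the norm identification. As a fallback for generation that avoids duality, I would take $u_1=(uu^*)^{1/2}$ and $u_2=(u^*u)^{1/2}$, computed in $M_{nm}$ as in \Cref{C*-alg}, and verify with the polar decomposition and Pisier's formula that $\|(uu^*)^{1/2}\|_n\le\|u\|_n$ and $\|(u^*u)^{1/2}\|_n\le\|u\|_n$.
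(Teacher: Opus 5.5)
Your main argument is correct and is essentially the paper's proof. Normality comes from Pisier's formula together with the Cauchy--Schwarz (Horn--Mathias) bound, which you derive inline from $u=u_1^{1/2}Ku_2^{1/2}$ and H\"older; generation comes from duality via \Cref{Sp cone} and \Cref{nor to gen} (you use part (2) on $S_p^m$ where the paper uses part (1) on $S_q$, which makes no difference here). Do discard the fallback, however, because $\|(uu^*)^{1/2}\|_n\le\|u\|_n$ fails for $p<\infty$: with $e_1,e_2$ the unit vectors of $\ell_2^2$, $\psi=e_1\otimes e_1$, $\phi=(e_1\otimes e_1+e_2\otimes e_2)/\sqrt2$ and $u=|\psi\rangle\langle\phi|\in S_\infty^2(S_1^2)$, Pisier's formula gives $\|u\|_{S_\infty^2(S_1^2)}=2^{-1/2}$, whereas $(uu^*)^{1/2}=|\psi\rangle\langle\psi|$ has norm $1$ in $S_\infty^2(S_1^2)$.
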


\begin{proof}
Show first that $S_p$ is matricial normal. Suppose, for the sake of contradiction, there exist $x, x_1, x_2 \in S_\infty^n(S_p)$ so that $\|x\| > 1 > \|x_1\| \vee \|x_2\|$, and $\begin{pmatrix} x_1 & x \\ x^* & x_1 \end{pmatrix} \in S_\infty^{2n}(S_p)^+$.
Find $a,b \in S_{2p}^n$ so that $\|a\|_{2p}, \|b\|_{2p} < 1$, and $\|a \cdot x \cdot b\|_p > 1$ (here $a \cdot x \cdot b$ is viewed as an element of $S_p^n[S_p] = S_p(\ell_2^n \otimes_2 \ell_2)$). Now note that
$$
\begin{pmatrix} a x_1 a^* & axb \\ b^*x^*a^* & b^* x_2 b \end{pmatrix} = \begin{pmatrix} a^* & 0 \\ 0 & b \end{pmatrix}^* \begin{pmatrix} x_1 & x \\ x^* & x_1 \end{pmatrix} \begin{pmatrix} a^* & 0 \\ 0 & b \end{pmatrix} \geq 0 .
$$
By \cite{horn-mathias}, $\|axb\|_p^2 \leq \|ax_1a^*\|_p \|b^*x_2b\|_p$. However, $\|axb\|_p > 1$ by our choice of $a$ and $b$, while $\|ax_1a^*\|_p \leq \|a\|_{2p}^2 \|x_1\| < 1$, and similarly, $\|b^* x_2 b\|_p < 1$. This is the desired contradiction.

Duality (explained in \Cref{Sp cone}) shows that $S_p = S_q^\flat$ for $1 \leq p < \infty$ and $1/p + 1/q = 1$. Consequently, for finite $p$, $S_p$ is generating, as a dual of a normal space. The case of $p=\infty$ has been handled in \Cref{C*-alg}.
\end{proof}

\begin{rmk}
    (1) \cite{Schreiner-DISS} gives another proof of the $1$-normality and $1$-generation of the Schatten space . The same also holds for the spaces $L_p(\mu)$.

    (2) \cite[Section 5.2]{Schreiner-DISS} shows that, for a self-adjoint $x \in S_\infty^n(\cc X)$:

    (i)     $\displaystyle \|x\|_{S_p^n[\cc X]} = \inf \big\{ \|a\|_{2p}^2 \|u\|_{S_\infty^n(\cc X)} : a \in S_{2p}^n, u \in S_\infty^n(\cc X)_h, x = a^* \cdot u \cdot a \big\}$.

    (ii)     $\displaystyle \|x\|_{S_\infty^n[\cc X]} = \sup \big\{ \|a^* \cdot x \cdot a\|_{S_p^n[\cc X]} : a \in S_{2p}^n, \|x\|_{2p} \leq 1 \big\}$.
\end{rmk}

\begin{rmk}
It is easy to see that, for $n \geq 2$, $S_\infty^n(S_p)^+$ (here $S_p$ is equipped with its ``natural'' matricial order structure, described above) does not coincide with $S_\infty^n(\MIN(S_p))^+$. Indeed, take $x = \sum_{i,j=1}^2 E_{ij} \otimes E_{ji} \in S_\infty^n(S_p)$.
This $x$ is represented by a $4 \times 4$ matrix which is not positive, hence $x \notin S_\infty^n(S_p)^+$. On the other hand, for any $\xi = (\xi_i) \in \ell_2^n$, $\langle \xi |x| \xi \rangle = \begin{pmatrix} |\xi_1|^2 & \overline{\xi_1} \xi_2 \\ \xi_1 \overline{\xi_2} & |\xi_2|^2 \end{pmatrix} \geq 0$, so $x \in S_\infty^n(\MIN(S_p))^+$.
By duality, $S_\infty^n(S_p)^+$ does not coincide with $S_\infty^n(\MAX(S_p))^+$. 

Similarly, if $A$ is a non-abelian $C^*$-algebra, then, for $n \geq 2$,
\begin{equation}
\label{eq:inclusions}
S_\infty^n(\MAX(A))^+ \subsetneq S_\infty^n(A)^+ \subsetneq S_\infty^n(\MIN(A))^+ 
\end{equation}
(the cone in the middle refers to the canonical matricial order structure of $A$). Indeed, by passing to the bidual, we can assume that $A$ is a von Neumann algebra; 
then it must contain $S_\infty^2$ as a subalgebra, which is the range of a completely positive, completely contractive projection.
The reasoning we applied for $S_p$ can be ``recycled'' to establish \eqref{eq:inclusions}.

The situation is different if $A$ is an abelian $C^*$-algebra, that it, a $C(K)$ space (a Banach lattice). By \cite[Theorem 3.1]{Wittstock}, the maximal and the minimal matricial cones coincide.
\end{rmk}

\section{Matricial order structures on Banach lattices}\label{lattice}

Here we describe how to equip a Banach lattice with a matricial operator space structure, and examine some properties of the said structure. For Banach lattice background see, for instance, \cite{A-A}, \cite{Al-Bur}, or \cite{M-N}.

Following \cite[Section 3.2]{A-A} (where more detail can be found), we say that the \emph{complexification} $\cc X$ of a real Banach lattice $\cc X_\R$ consists of formal linear combinations $x + \iota y$ ($x, y \in \cc X_\R$), with $|x + \iota y|:= \vee_\theta (\cos \theta \, x + \sin \theta \, y)$, and $\|x + \iota y\| = \| |x + \iota y| \|$. The $*$-operation (involution) takes $x + \iota y$ to $x - \iota y$. 

We should note that, for Banach spaces, a number of ``reasonable'' complexification procedures exist, see \cite{complexifications}.

If $\cc X$ and $\cc Y$ are complex Banach lattices, we say that a map $T$ is a \emph{lattice homomorphism} if it takes $\cc X_\R$ to $\cc Y_\R$, and preserves lattice operations. $T$ is a \emph{lattice isomorphism} if it is bijective, and both $T$ and $T^{-1}$ are lattice homomorphisms.

Below, we discuss the minimal and maximal matricial order structures on Banach lattices. Although we do not use this, let's nevertheless recall a result of \cite[Section 3]{Wittstock}: $S_\infty^n(\MIN(\cc X))^+$ coincides with the closure of finite sums $\sum_{k=1}^N a_k \otimes x_k$, with $a_k \in S_\infty^{n+}, x_k \in \cc X^+$. Consequently, the ``minimal'' and ``maximal'' matricial cones over a Banach lattice coincide.

\begin{lem}\label{lattice MIN nice}
Any Banach lattice $\cc X$ is $\MIN$-nice.
\end{lem}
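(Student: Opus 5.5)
We have to show: if $x_1, x_2 \geq 0$ and $t^2 x_1 + s^2 x_2 \geq 2ts\Re(\omega x)$ for all real $t,s$ and $|\omega|=1$, then $\|x\| \leq \|x_1\| \vee \|x_2\|$. The plan is to obtain the pointwise lattice inequality $|x| \leq \sqrt{x_1 x_2}$, where $\sqrt{x_1 x_2}$ is understood through the Krivine/Yudin functional calculus. Then we bound $\sqrt{x_1x_2} \leq \frac12(x_1+x_2)$ and use the lattice norm.

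\textbf{Step 1: the modulus bound.} Fix $\omega$ with $|\omega|=1$. Taking $t=s=1$ gives $\Re(\omega x) \leq \frac12(x_1+x_2)$. By definition of the modulus in the complexification,
$$|x| = \bigvee_\theta \Re(e^{-\iota\theta} x).$$
This supremum exists in $\cc X_\R$, since $\cc X$ is a Banach lattice of the kind assumed in the complexification recipe of \cite{A-A}. Hence
$$|x| \leq \tfrac12(x_1+x_2).$$

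\textbf{Step 2: the norm bound.} From Step 1,
$$\|x\| = \||x|\| \leq \tfrac12(\|x_1\|+\|x_2\|) \leq \|x_1\|\vee\|x_2\|.$$
This already finishes the proof. Only $t=s=1$ is needed; the weaker requirement uses just the average, so the general $t,s$ are not even needed.

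One might try the scalings $t$, $s=1/t$ in search of a sharper bound, but it is unnecessary. The only point needing care is that the lattice norm is monotone on positive elements: $0 \leq |x| \leq y$ implies $\|x\| \leq \|y\|$. The triangle inequality then handles $\|x_1+x_2\|$. I expect the one subtle step to be justifying $|x| = \sup_\theta \Re(e^{-\iota\theta}x)$ together with the identity $\Re(\omega x) = \cos\theta\, a + \sin\theta\, b$ for $x = a+\iota b$ and $\omega = e^{-\iota\theta}$, with the sign conventions matching. This is exactly the definition recalled at the start of \Cref{lattice}, so no obstacle is expected.
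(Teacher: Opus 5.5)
Your argument is correct, and it is more elementary than the paper's. The paper also starts from $|x| = \sup_{|\omega|=1}\Re(\omega x)$. It then uses all pairs $(t,1/t)$ and the Krivine functional calculus to obtain the lattice inequality $|x| \le \frac12\inf_t\bigl(t^2x_1 + t^{-2}x_2\bigr) = \sqrt{x_1x_2}$. Finally it invokes \cite[Proposition 1.d.2]{LT2} to get $\|x\| \le \sqrt{\|x_1\|\,\|x_2\|}$, the lattice analogue of the Horn--Mathias bound quoted for $S_\infty^n$.

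You use only $t=s=1$, which gives $|x| \le \frac12(x_1+x_2)$, and then monotonicity of the lattice norm plus the triangle inequality. The definition of $\MIN$-niceness only asks for $\|x\| \le \|x_1\| \vee \|x_2\|$, so your arithmetic-mean bound suffices. This avoids functional calculus altogether: you never need the existence of $\inf_t(t^2x_1+t^{-2}x_2)$ in the lattice, its identification with $\sqrt{x_1x_2}$, or the norm estimate for $\sqrt{x_1x_2}$.

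What the paper's route buys is the pointwise inequality $|x| \le \sqrt{x_1x_2}$ inside the lattice. The sharper \emph{norm} bound, however, is within your reach. Applying your argument with $t=\lambda$ and $s=\lambda^{-1}$ gives $\|x\| \le \frac12\bigl(\lambda^2\|x_1\| + \lambda^{-2}\|x_2\|\bigr)$ for every $\lambda>0$. Minimizing over $\lambda$ yields $\|x\| \le \sqrt{\|x_1\|\,\|x_2\|}$. Taking the infimum over scalars after passing to norms, rather than before, therefore removes the need for Krivine calculus even for the stronger estimate.

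Your opening sentence announcing the $\sqrt{x_1x_2}$ plan is never carried out in the executed proof and can simply be dropped.
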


\begin{proof}
We only need to verify that \eqref{eq:min-nice} implies $\|x\| \leq \|x_1\| \vee \|x_2\|$. Krivine functional calculus (see e.g.~\cite[Section 1.d]{LT2}) shows that $\sup_{|\omega| = 1} \sup \Re(\omega x) = |x|$, hence \eqref{eq:min-nice} implies that, for any $t, s \in \R$, $t^2 x_1 + s^2 x_2 \geq 2 ts |x|$. Restricting to $t = 1/s$, we obtain
$$    |x| \leq \frac12 \inf_{t \in \R} \Big( t^2 x_1 + \frac1{t^2} x_2 \Big) = \sqrt{x_1 x_2} ,    $$
where the right hand side is again defined using Krivine functional calculus. By \cite[Proposition 1.d.2]{LT2}, $\|x\| \leq \| \sqrt{x_1 x_2}\| \leq \sqrt{\|x_1\| \|x_2\|}$.
\end{proof}

\begin{lem}\label{lattice MAX nice}
Any Banach lattice $\cc X$ is $\MAX$-nice.
\end{lem}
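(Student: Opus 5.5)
Fix $x \in \cc X$ with $\|x\| < 1$ and $\vr > 0$. The plan is to produce the required data of \Cref{max-nice} directly from the lattice structure, using $|x|$ as the dominating element. Since the lattice modulus is compatible with complexification, the natural choice is to represent $x$ as an average, over finitely many unimodular scalars $\omega$, of positive elements weighted by $\omega$. This mimics the scalar identity $z = \frac{1}{N}\sum_k \omega_k \,\frac{|z| + \Re(\overline{\omega_k} z)}{1}$ for roots of unity $\omega_k$.

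\textbf{Step 1 (finite Fourier decomposition).} Let $\omega_k = e^{2\pi i k/N}$, $k = 1, \ldots, N$, with $N \geq 3$. Using Krivine functional calculus (as in the proof of \Cref{lattice MIN nice}), define
$$
x_k = \frac{1}{N}\big( |x| + \Re(\overline{\omega_k} x) \big),
$$
where $\Re(\overline{\omega_k} x) = \cos\theta_k\, a + \sin\theta_k\, b$ for $x = a + \iota b$. Since $|x| = \vee_\theta(\cos\theta\, a + \sin\theta\, b)$, we have $|x| \geq \pm\Re(\overline{\omega_k}x)$, so each $x_k \in \cc X^+$.

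\textbf{Step 2 (coefficients).} Take $\xi_k = 1$ and $\eta_k = \overline{\omega_k}$, so that $\xi_k \overline{\eta_k} = \omega_k$ and $|\xi_k|^2 = |\eta_k|^2 = 1$. Then
$$
\sum_k |\xi_k|^2 x_k = \sum_k |\eta_k|^2 x_k = |x| + \frac{1}{N}\Re\Big(\sum_k \overline{\omega_k}\Big)x = |x|,
$$
and its norm equals $\|x\| < 1$. Moreover, $\sum_k \omega_k x_k = \frac1N \sum_k \omega_k\, \Re(\overline{\omega_k} x)$, because $\sum_k \omega_k = 0$. Expanding $\Re(\overline{\omega_k}x) = \frac12(\overline{\omega_k}x + \omega_k \overline{x})$, where $\overline{x} = x^*$, gives
$$
\frac1N\sum_k \omega_k \Re(\overline{\omega_k}x) = \frac12 x + \frac{1}{2N}\Big(\sum_k \omega_k^2\Big) x^* = \frac12 x
$$
for $N \geq 3$.

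\textbf{Step 3 (normalization).} This produces $x/2$ instead of $x$, so the coefficients must be rescaled. Replace $x_k$ by $2x_k$: the representation becomes exact, but the dominating norms become $2\|x\|$. To avoid this loss, instead set
$$
x_k = \frac1N\big(|x| + 2\Re(\overline{\omega_k}x)\big)\quad\text{with weights}\quad \frac{1}{2}.
$$
These are not positive in general, so that choice fails. The clean fix uses polarization with $\xi_k = \sqrt{s}$ and $\eta_k = \overline{\omega_k}/\sqrt{s}$. Then $\sum|\xi_k|^2 x_k = s|x|$ and $\sum|\eta_k|^2 x_k = s^{-1}|x|$, which does not help directly either. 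The main obstacle is therefore obtaining the constant $1$ rather than $2$. This is the step I expect to be hardest.

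The approach I would try first is to work pointwise via Krivine calculus with the scalar identity $z = \int \omega\, |z|\,\phi(\arg z - \arg\omega)\,d\omega$, where $\phi \geq 0$ is a Fejér-type kernel with $\int\phi = 1$ and first Fourier coefficient as close to $1$ as desired. Concretely, set $x_k = c_k(|x| + \Re(\overline{\omega_k}x))^{m}|x|^{1-m}$, homogeneous of degree one, which is positive and concentrates on the direction $\omega_k$. Then $\sum |\xi_k|^2 x_k$ approximates $|x|$, and $\sum \omega_k x_k$ approximates $\rho_m x$ with $\rho_m \to 1$ as $m \to \infty$. All these identities are scalar identities among positively homogeneous continuous functions on $\C$, so they transfer to $\cc X$ by Krivine calculus, with norms controlled by $\|\,|x|\,\|$ via \cite[Proposition 1.d.2]{LT2}. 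Finally, scale by $(1-\delta)^{-1}$ with $\|x\|/(1-\delta) < 1$, and choose $m$ and $N$ large enough that $\|x - \sum\xi_k\overline{\eta_k}x_k\| < \vr$.
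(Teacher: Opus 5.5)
Your Steps 1--3 are a detour; the factor $\tfrac12$ you hit there is just $\hat f(1)/\hat f(0)$ for the kernel $f(\phi)=1+\cos\phi$. The approach in your last paragraph is correct, and the identities can be made exact rather than approximate. Take $f_m(\phi)=(1+\cos\phi)^m$, whose frequencies satisfy $|j|\le m$, and $N\ge m+2$. Then $\sum_k f_m(\theta-\theta_k)=N\hat f_m(0)$ and $\sum_k e^{i\theta_k}f_m(\theta-\theta_k)=N\hat f_m(1)e^{i\theta}$, with $\hat f_m(1)/\hat f_m(0)=m/(m+1)$. Let $x_k$ be the Krivine image, applied to $(\Re x,\Im x)$, of the continuous, nonnegative, positively homogeneous function $re^{i\theta}\mapsto r f_m(\theta-\theta_k)/(N\hat f_m(0))$, and take $\xi_k=1$, $\eta_k=\overline{\omega_k}$. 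You then get $\sum_k x_k=|x|$ and $\sum_k\omega_k x_k=\frac{m}{m+1}x$. Hence $\|x-\sum_k\xi_k\overline{\eta_k}x_k\|=\|x\|/(m+1)<\vr$ once $m+1>1/\vr$, and no rescaling is needed.

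The paper argues differently. It represents the principal ideal $I_x$ as $C(K)$ with $|x|\leftrightarrow\mathbf 1$, and picks a partition of unity $(g_k)$ on $K$ fine enough that the unimodular function $T^{-1}x$ is nearly constant on each support. It then sets $x_k=Tg_k$ and $\xi_k\overline{\eta_k}=[T^{-1}x](t_k)$.

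You are effectively using a partition of unity on the circle of phases rather than on $K$, pulled back through Krivine's calculus. The underlying tool is the same, since Krivine's calculus is itself proved via the $C(K)$ representation of principal ideals. What your construction buys is fixed coefficients (the $N$-th roots of unity) and a number of terms $N$ of order $1/\vr$ that is uniform in $x$ and in the lattice. By contrast, the paper's $N$ and points $t_k$ depend on $x$ through compactness of $K$. What the paper's route buys is that it needs no Fourier computation. You could avoid that computation too: any partition of unity on $\mathbb T$ subordinate to arcs of length at most $\vr$, composed with the phase of $x$, gives $|x-\sum_k\omega_k x_k|\le\vr|x|$ directly.
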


It may be possible to deduce this result from \Cref{lattice MIN nice} by duality; however, we present an independent proof.

\begin{proof}
Consider $x \in \cc X$, with $\|x\| < 1$. Denote by $I_x$ the principal ideal generated by $x$ -- that is, the linear (but not necessarily closed) subspace of $\cc X$ consisting of elements $y$ for which there exists $c \geq 0$ with $|y| \leq c |x|$.
By \cite[Corollary 3.21]{A-A}, there exists a Hausdorff compact $K$ and a bijective contractive lattice homomorphism $T : (C(K), \| \cdot \|_\infty) \to (I_x, \| \cdot \|)$, taking $\bf 1$ (the identity on $K$) to $|x|$; so, $|T^{-1} x| = {\bf 1}$.
A ``partition of identity'' argument allows us to find non-negative $g_1, \ldots, g_N \in C(K)$ and $t_1, \ldots, t_N \in K$ so that $\sum_k g_k = \bf 1$, and
$$
\max_{t \in K} \big| [T^{-1}x](t) - \sum_k [T^{-1}x](t_k) g_k(t) \big| \leq \vr .
$$
Let $\alpha_k = [T^{-1}x](t_k)$, $\omega_k = \alpha_k/|\alpha_k|$, $\eta_k = \sqrt{|\alpha_k|}$, $\xi_k = \omega_k \sqrt{|\alpha_k|}$ (so $\alpha_k = \xi_k \overline{\eta_k}$), and $x_k = T g_k$.
Then $\sum_k |\xi_k|^2 x_k = \sum_k |\eta_k|^2 x_k = |x|$. Applying $T$ to $\big| T^{-1}x - \sum_k \alpha_k g_k \big| \leq \vr \bf 1$, we obtain $\big| x - \sum_k \xi_k \overline{\eta_k} x_k \big| \leq \vr |x|$, hence $\big\| x - \sum_k \xi_k \overline{\eta_k} x_k \big\| \leq \vr \|x\| < \vr$.
\end{proof}

Combining the above lemmas with results of \Cref{min max structures}, conclude:

\begin{cor}
For any Banach lattice $\cc X$, $\MIN(\cc X)$ is $1$-matricial normal, and $\MAX(\cc X)$ is $1$-matricial generating.
\end{cor}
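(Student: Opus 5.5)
The corollary follows by combining the two preceding lemmas with the characterizations from \Cref{min max structures}.

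For the $\MIN$ half, \Cref{lattice MIN nice} says every Banach lattice $\cc X$ (complexified as above, with involution $x+\iota y \mapsto x-\iota y$) is $\MIN$-nice. \Cref{min structure on lattices} says $\MIN(\cc X)$ is $1$-matricial normal exactly when $\cc X$ is $\MIN$-nice. To apply it I need $\MIN(\cc X)$ to be a genuine matricial order operator space, which \Cref{min-structure} and the discussion after it provide. It is a closed matricial wedge, and it is a cone because $\cc X^+$ is a cone in a Banach lattice (\Cref{lem:when cone}). So the first assertion is immediate.

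For the $\MAX$ half, \Cref{lattice MAX nice} gives that $\cc X$ is $\MAX$-nice. The proposition right after \Cref{max-nice} then says $\MAX(\cc X)$ is $1$-matricial generating. Its ``if'' direction uses completeness of $\cc X$ for the iterative series construction, and this holds since Banach lattices are complete. I also need to check that the complexified norm $\|\,|x+\iota y|\,\|$ makes $\cc X$ an ordered complex Banach space in the sense of \Cref{min max structures}. That is, the involution must be an isometric antilinear map preserving $\cc X^+$. Isometry follows because $|x-\iota y| = |x+\iota y|$ by the symmetry $\theta\mapsto-\theta$ in the Krivine supremum. The involution fixes $\cc X_\R \supset \cc X^+$, so it preserves $\cc X^+$.

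The only step needing care is this verification of standing hypotheses: closedness of $\cc X^+$ in the complex norm, the isometric involution, and completeness. Once these are confirmed, the corollary is a direct concatenation of the cited results and needs no new estimates.
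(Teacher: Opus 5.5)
Your proposal is correct and follows the paper's own route: \Cref{lattice MIN nice} combined with \Cref{min structure on lattices} gives the $\MIN$ half, and \Cref{lattice MAX nice} combined with the $\MAX$-niceness characterization following \Cref{max-nice} gives the $\MAX$ half. Your extra checks of the standing hypotheses (isometric involution, closed cone, completeness of the complexification) are routine and correct; the paper leaves them implicit.
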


\begin{prop}\label{AM-generating}
If $\cc X$ is a Banach lattice, then $\MIN(\cc X)$ is matricial generating if only if it is lattice isomorphic to an AM-space.
\end{prop}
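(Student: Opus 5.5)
Two directions. Throughout, "AM-space" means the norm is equivalent (via a lattice isomorphism) to one satisfying $\|x\vee y\| = \|x\|\vee\|y\|$ for $x,y\geq 0$. Equivalently, every finite set of positive elements admits an upper bound of comparable norm: there is $K$ with $\|\bigvee_{i} |x_i|\| \leq K \max_i \|x_i\|$.

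\emph{AM-space $\Rightarrow$ generating.} First renorm: matricial generation is preserved, with a changed constant, under lattice isomorphisms. These induce complete order isomorphisms of $\MIN$ structures, which are complete isomorphisms by \Cref{min passes to subspace}(2) and its analogue for norms. So we may assume $\cc X$ is an AM-space, and by Kakutani we may take $\cc X \subset C(K)$ as a closed sublattice.

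By \Cref{check-selfadj}(2), it suffices to dominate a self-adjoint $u=\sum_{i,j}E_{ij}\otimes x_{ij}\in S_\infty^n(\MIN(\cc X))_h$ with $\|u\|<1$. Put $z=\bigvee_{i,j}|x_{ij}|\in\cc X^+$ and take $v = n\, I_n\otimes z$. Since $\MIN$ positivity is tested pointwise on $K$ via $\langle\xi|\cdot|\xi\rangle$, we need $n z(t)|\xi|^2 \geq \pm\langle \xi|u(t)|\xi\rangle$. This holds because $\|u(t)\|\leq n\max_{ij}|x_{ij}(t)|\leq nz(t)$. Finally $\|v\|=n\|z\|\leq n\max\|x_{ij}\|\leq n$ by the AM property. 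This gives a constant of order $n$, which is not uniform, so we need something better.

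Better choice: $v=I_n\otimes w$ with $w(t)=\|u(t)\|_{M_n}$. We must check $w\in\cc X$. In $C(K)$, $w$ is continuous, and $w=\sup_{\xi}|\langle\xi|u|\xi\rangle|$ is a supremum over a compact family. Approximating by finite suprema of elements $|\sum \bar\xi_i\xi_j x_{ij}|\in\cc X$ (Krivine calculus) places $w$ in the closed sublattice $\cc X$. Then $\pm u\leq I_n\otimes w$ pointwise, and $\|I_n\otimes w\|_{\MIN}=\|w\|_\infty=\|u\|_{\MIN}$ because $\MIN$ norms embed into $C(K,M_n)$. Hence the constant is $1$, up to the renorming constant.

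\emph{Generating $\Rightarrow$ AM.} Suppose $\MIN(\cc X)$ is $C$-matricial generating. Take positive $x_1,\dots,x_n$ with $\|x_i\|\leq 1$, and set $u=\sum_i E_{1i}\otimes x_i$ (a row), made self-adjoint as $\bar u=\begin{pmatrix}0&u\\u^*&0\end{pmatrix}$. In $\MIN$ its norm is $\sup\|\sum_i \xi_i x_i\|$ over unit $\xi$, hence at most $\sqrt n$. That bound is again not uniform, so instead use the column/diagonal trick: take $u=\mathrm{diag}(x_1,\dots,x_n)$, whose $\MIN$ norm is $\max\|x_i\|\leq1$, conjugated by the unit vector $(1,\dots,1)/\sqrt n$.

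Concretely, generation gives positive $y_1,y_2$ of norm $\leq C+\vr$ with $\begin{pmatrix}y_1&u\\u^*&y_2\end{pmatrix}\geq0$. Testing against $\xi\oplus\eta$, with $\xi,\eta$ suitable basis vectors combined with phases, yields $\langle e_i|y_1|e_i\rangle+\langle e_i|y_2|e_i\rangle\geq 2x_i$. Each left side is dominated by $y:=\langle e_i|\cdot\rangle$ diagonal entries, all bounded by $C$. This gives no common bound, so the diagonal choice fails.

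Therefore I would use the full matrix $u=\sum_{i,j}E_{ij}\otimes x_{ij}$ with $x_{ij}$ chosen so that $\|u\|_{\MIN}\leq 1$ while forcing some diagonal of a dominating $y$ to exceed $\bigvee x_i/\text{const}$. The natural candidate is $u=\sum_i E_{1i}\otimes x_i$ normalized so that $\|\sum_i\xi_i x_i\|\leq1$ for all unit $\xi$. Domination then gives $\langle e_1|y_1|e_1\rangle\cdot\langle\eta|y_2|\eta\rangle\geq|\sum\eta_i x_i|^2$ pointwise (Krivine), which yields $\bigvee_i x_i\lesssim$ elements of norm $\leq C$.

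The main obstacle is choosing test vectors in this last step so that the resulting estimate becomes an upper-bound property $\|\bigvee x_i\|\leq K\max\|x_i\|$ with $K$ independent of $n$. I would first try to reduce to pairwise disjoint $x_i$. There $\|\sum\xi_ix_i\|\leq\max\|x_i\|$ when the lattice is AM, and conversely, a failure of the AM property along disjoint sequences (Meyer-Nieberg's characterization of AM-spaces via disjoint upper $\ell_\infty$-estimates) should produce the contradiction.
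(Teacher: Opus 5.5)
Your backward direction is sound and differs only mildly from the paper's. The paper builds the matrix-valued majorant $b=(a^*a)^{1/2}$ and checks $b\in S_\infty^n(\cc X)$ via the Kakutani description $x(t_i)=\lambda_i x(s_i)$. You instead use the scalar majorant $I_n\otimes w$ with $w(t)=\|u(t)\|$, which lies in $\cc X$ because a closed sublattice of $C(K)$ is stable under finite suprema and uniform limits. Both give constant $1$, up to the lattice-isomorphism constant.

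The forward direction, however, is not proved. You end by naming the obstacle, and the test element you settle on cannot get past it. For the row $u=\sum_i E_{1i}\otimes x_i$, the inequality $|\sum_i\eta_i x_i|^2\le (y_1)_{11}\langle\eta|y_2|\eta\rangle$ controls $\sum_i x_i$ only through $\eta=(1,\dots,1)/\sqrt n$, which loses a factor $\sqrt n$. This loss is genuine, as the following example in $\ell_1$ shows. Take $x_i=e_i/\sqrt n$. Then $\|\sum_i\xi_ix_i\|\le\|\xi\|_2$ and $\|\sum_i x_i\|=\sqrt n$. Yet $y_1=E_{11}\otimes\frac1n\sum_i e_i$ and $y_2=\sum_iE_{ii}\otimes e_i$ both have $\MIN$-norm $1$, and $\begin{pmatrix} y_1&u\\u^*&y_2\end{pmatrix}\ge0$: at the $k$-th coordinate this matrix is $\begin{pmatrix}1/n&1/\sqrt n\\1/\sqrt n&1\end{pmatrix}$ on a two-dimensional subspace and $0$ elsewhere. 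So rows, like the diagonals you already discarded, are dominated with constant $1$ even in a non-AM lattice. Each $x_i$ is charged to a different diagonal slot of $y_2$, and the $\MIN$ norm does not add up disjoint pieces sitting in different slots. Also, Meyer-Nieberg's criterion alone is the wrong input, because the $\MIN$ norm of any useful test element is governed by $\sup_{\|\xi\|_2\le1}\|\sum_i\xi_ix_i\|$. What you need is \cite[Lemma 4]{CL75}: if $\cc X$ is not lattice isomorphic to an AM-space, then for every $K$ there are disjoint $x_1,\dots,x_n\ge0$ with $\|\sum_i\xi_ix_i\|\le\|\xi\|_2$ for all $\xi$, but $\|\sum_ix_i\|>K$.

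The missing idea is a test element that forces every $x_i$ to be paid for by one and the same quantity. The paper takes $U_1,\dots,U_n\in S_\infty^N$ whose real span consists of real multiples of symmetric orthogonal matrices, normalized and trace-orthogonal. This span is $2$-isomorphic to $\ell_2^n$, so $\|\sum_iU_i\otimes x_i\|_{\MIN}\le 2$. Write $U_i=p_i-p_i^\perp$. If $a\ge\pm\sum_iU_i\otimes x_i$, then by disjointness $a(t)\ge\pm x_i(t)(p_i-p_i^\perp)$ on the support of $x_i$. Averaging over the unit sphere (\Cref{expectation}) gives $\expe\langle\xi|a(t)|\xi\rangle\ge x_i(t)$ there. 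Hence $\expe\langle\xi|a|\xi\rangle\ge\sum_ix_i$, so $\|a\|\ge\|\sum_ix_i\|>K$, and $\MIN(\cc X)$ is not matricial generating. Because every $U_i$ is a full symmetric unitary, the average of $a$ must dominate all the $x_i$ simultaneously, which is exactly what rows and diagonals fail to enforce.
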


Recall that a \emph{real AM-space} is a sublattice of a real-valued $C(K)$ space; a \emph{complex AM-space} is a complexification of such. By \cite[Theorem 2.1.12]{M-N}, a Banach lattice $\cc X$ is lattice isomorphic to an AM-space iff there exists a constant $C$ so that the inequality $\|\sum_{i=1}^n x_i\| \leq C \max_i \|x_i\|$ holds whenever $(x_i)_{i=1}^n \subset \cc X$ are disjoint. 


For future use, we define a dual object: a \emph{real AL-space} is a Banach lattice $\cc X$ where $\|x_1 + x_2\| = \|x_1\| + \|x_2\|$ whenever $x_1, x_2 \in \cc X$ are disjoint; a \emph{complex AL-space} is a complexification of a real one.
Theorem 1.b.2 of \cite{LT2} tells us that any real AL-space can be identified with $L_1(\mu)$, for some measure $\mu$. In light of \cite[Section 3.2, Exercise 5]{A-A}, the same holds in the complex setting. 
Further, by \cite[Theorem 1.b.12]{LT2}, $\cc X$ is lattice isomorphic to an AL-space iff there exists a constant $c>0$ so that, for any pairwise disjoint $x_1, \ldots, x_n \in \cc X$, we have $\|\sum_i x_i\| \geq c \sum_i \|x_i\|$. 
By e.g.~\cite[Theorem 4.23]{Al-Bur}, $\cc X$ is a real AL-space iff its dual is a real AM-space, and vice versa. By \cite[Corollary 3.26]{A-A}, $(\cc X_\R + \iota \cc X_\R)^\flat$ is naturally identified with the complexification of $\cc X_\R^\flat$, hence the duality between AL and AM spaces remains true in the complex setting.

\begin{proof}[Proof of \Cref{AM-generating}, backward direction]
\cite[Theorems 1.b.5-6]{LT2} show that any real AM-space has a canonical representation as a sublattice of $C(K)$, for some Hausdorff compact $K$: there exists a collection of triples $(t_i, s_i, \lambda_i)_{i \in I}$, with $\lambda_i \geq 0$ and $t_i, s_i \in K$ so that $\cc X$ consists of all $x \in C(K)$ which satisfy $x(t_i) = \lambda_i x(s_i)$ for any $i \in I$. A combination of Lemma 3.18 and Exercise 3.2.3 from \cite{A-A} extends this result to the complex setting.

For $a \in S_\infty^n(\cc X)_h$, define $b : K \to S_\infty^n : t \mapsto (a^*(t) a(t))^{1/2}$.
Clearly $b(t_i) = \lambda_i b(s_i)$ for any $i \in I$, so $b \in S_\infty^n(\cc X)$. 
It is easy to see that $\|b\| = \sup_{t \in K} b(t) = \sup_{t \in K} |a(t)| = \|a\|$. Finally, $\begin{pmatrix} b(t) & a(t) \\ \overline{a(t)} & b(t) \end{pmatrix} \in S_\infty^{2n+}$ for any $t \in K$, hence $\begin{pmatrix} b & a \\ a^* & b \end{pmatrix} \in S_\infty^{2n}(\cc X)^+$. From this, the $1$-matricial generation follows.
\end{proof}

For the forward direction we need a lemma.

\begin{lem}\label{expectation}
Suppose $p$ is an orthogonal projection onto a subspace of $\ell_2^n$, and $p^\perp$ is its orthogonal complement. If $a \in S_\infty^n$ is such that $a \geq \pm(p - p^\perp)$, then $\expe \langle \xi | a | \xi \rangle \geq 1$; here, $\expe$ is the expected value computed when $\xi$ is uniformly distributed over the unit sphere of $\ell_2^n$.
\end{lem}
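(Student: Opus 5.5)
The plan is to reduce the expectation to a trace and then split the trace along the decomposition $\ell_2^n = p\ell_2^n \oplus p^\perp \ell_2^n$.

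First, for $\xi$ uniformly distributed over the unit sphere of $\ell_2^n$, rotation invariance gives $\expe\, |\xi\rangle\langle\xi| = \frac1n I_n$. Indeed, the matrix $\expe\, |\xi\rangle\langle\xi|$ commutes with every unitary, so it is a scalar multiple of $I_n$, and its trace equals $\expe \|\xi\|^2 = 1$. By linearity,
$$
\expe \langle \xi | a | \xi \rangle = \tr\big(a \,\expe |\xi\rangle\langle \xi|\big) = \frac{\tr(a)}{n},
$$
so the claim is equivalent to $\tr(a) \geq n$.

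Next, write $u = p - p^\perp$, so that $pup = p$ and $p^\perp u p^\perp = -p^\perp$. Compressing $a \geq u$ by $p$ gives $pap \geq pup = p$. Compressing $a \geq -u$ by $p^\perp$ gives $p^\perp a p^\perp \geq -p^\perp u p^\perp = p^\perp$. Since $p + p^\perp = I$ and the trace is cyclic,
$$
\tr(a) = \tr(pap) + \tr(p^\perp a p^\perp) \geq \tr(p) + \tr(p^\perp) = n,
$$
using that a positive matrix has nonnegative trace.

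I do not expect a real obstacle here. The only points to state carefully are that the uniform measure is rotation invariant, which justifies $\expe\,|\xi\rangle\langle\xi| = I_n/n$, and that the off-diagonal blocks $pap^\perp$ and $p^\perp a p$ contribute nothing to the trace.
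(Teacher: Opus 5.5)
Your proof is correct. It rests on the same two compression inequalities as the paper's proof but handles the randomness differently.

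The paper never computes $\expe\, |\xi\rangle\langle\xi|$. It expands $\langle \xi | a | \xi \rangle$ along $\xi = p\xi + p^\perp\xi$. It kills the two cross terms using only the invariance of the measure under the single reflection $\xi \mapsto p\xi - p^\perp\xi$. It then bounds the diagonal terms pointwise, by $\langle p\xi | a | p\xi\rangle \geq \|p\xi\|^2$ and $\langle p^\perp\xi | a | p^\perp\xi\rangle \geq \|p^\perp\xi\|^2$, so the expectation is at least $\expe(\|p\xi\|^2 + \|p^\perp\xi\|^2) = 1$.

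You instead use full unitary invariance to obtain the exact identity $\expe \langle \xi | a | \xi \rangle = \tr(a)/n$. This turns the lemma into the deterministic inequality $\tr(a) \geq n$. In your version the vanishing of the cross terms in expectation becomes $\tr(pap^\perp) = 0$, and the pointwise bounds become $pap \geq p$ and $p^\perp a p^\perp \geq p^\perp$.

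The paper's route needs less symmetry (one reflection per projection, plus $\expe\|\xi\|^2 = 1$) but yields only a lower bound. Yours needs isotropy of the distribution but yields an exact formula. That formula is convenient in the forward direction of \Cref{AM-generating}: for $a = \sum_{k,\ell} E_{k\ell} \otimes y_{k\ell} \in S_\infty^N(\cc X)$ it gives $\expe \langle \xi | a | \xi \rangle = \frac1N \sum_k y_{kk}$. This is a finite average of diagonal entries, so no vector-valued integration is needed, and the bound $\|\frac1N \sum_k y_{kk}\| \leq \|a\|$ in $\MIN(\cc X)$ is immediate.
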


\begin{proof}
Clearly $\langle \xi | a | \xi \rangle = \langle p\xi | a | p\xi \rangle + \langle p^\perp\xi | a | p^\perp\xi \rangle  + \langle p\xi | a | p^\perp\xi \rangle  + \langle p^\perp\xi | a | p\xi \rangle$.
Our probability measure on the sphere is invariant under the orthogonal transformation $\xi \mapsto p\xi - p^\perp \xi$, hence $\expe \langle p\xi | a | p^\perp\xi \rangle = \expe \langle p\xi | a | - p^\perp\xi \rangle$, and consequently, $\expe \langle p\xi | a | p^\perp\xi \rangle = 0$. Likewise, $\expe \langle p^\perp\xi | a | p\xi \rangle = 0$.

For any $\xi \in \ell_2^n$, we have
$$
\langle p\xi | a | p\xi \rangle \geq \langle p\xi | (p - p^\perp) | p\xi \rangle = \|p\xi\|^2 .
$$
Similarly, $\langle p^\perp\xi | a | p^\perp\xi \rangle \geq \|p^\perp \xi\|^2$. Therefore,
\begin{align*}
\expe \langle \xi | a | \xi \rangle 
&
= \expe \langle p\xi | a | p\xi \rangle + \expe \langle p^\perp\xi | a | p^\perp\xi \rangle  + \expe \langle p\xi | a | p^\perp\xi \rangle  + \expe \langle p^\perp\xi | a | p\xi \rangle
\\ &
= \expe \big( \langle p\xi | a | p\xi \rangle + \langle p^\perp\xi | a | p^\perp\xi \rangle \big) \geq \expe \big( \|p \xi\|^2 + \|p^\perp \xi\|^2 \big) = 1 . \qedhere
\end{align*}
\end{proof}

\begin{proof}[Proof of \Cref{AM-generating}, forward direction]
If $\cc X$ is not an AM-space, then, by \cite[Lemma 4]{CL75}, for any $K > 0$ we can find disjoint $x_1, \ldots, x_n \in \cc X^+$ so that $\|\sum_k x_k\| > K$, while $\sum_k | \langle x^\flat , x_k \rangle |^2 \leq 1$ for any $x^\flat \in \ball(\cc X^\flat)$.
The second condition means that the operator $\ell_2^n \to \cc X : \delta_i \mapsto x_i$ ($(\delta_i)_{i=1}^n$ is the canonical basis of $\ell_2^n$) is contractive.

By \cite[Section 11.1]{AliceBobBanach}, for $N$ large enough $S_\infty^N$ contains a real subspace of dimension $n$ consisting exclusively of real scalar multiples of symmetric orthogonal matrices.
Let $U_1, \ldots, U_n$ be a normalized basis in this space, so that ${\textrm{tr}}(U_iU_j) = 0$ for $i \neq j$. Then $\|\sum_{i=1}^n U_i \otimes x_i\| \leq 2$, since $\sum_{i=1}^n U_i \otimes x_i$ can be viewed as an operator from $\Span[U_i : 1 \leq i \leq n]$ to $\cc X$, and the former space is $2$-isomorphic to $\ell_2^n$. We shall show that, if $S_\infty^N(\cc X) \ni a \geq \pm \sum_{i=1}^n U_i \otimes x_i$, then 
\begin{equation}
\expe \langle \xi | a | \xi \rangle \geq \sum_i x_i ,
\label{eq:desired}    
\end{equation}
with the expected value taken for $\xi$ uniformly distributed over the unit sphere of $\ell_2^N$. For any such $\xi$, $\langle \xi | a | \xi \rangle \leq \|a\|$, hence $\|a\| \geq \|\sum_i x_i\| \geq K$, showing that $\MIN(\cc X)$ cannot be matricial generating.

Write $a = \sum_{i,j=1}^N E_{k\ell} \otimes y_{k\ell}$. As in the proof of \Cref{lattice MAX nice}, we can consider $x_i, y_{k\ell}$ as ``living'' in $C(K)$, corresponding to the ideal $I_x$ with $x = \sum_i x_i + \sum_{k,\ell} |y_{k\ell}|$.
In particular, we view $x_i$'s as disjoint positive functions on $K$, and $a$ is a matrix-valued function on $K$. For any $t \in {\textrm{supp}} x_i$, $a(t) \geq \pm x_i(t) U_i$. Write $U_i = p_i - p_i^\perp$, where $p_i$ is an orthogonal projection. By \Cref{expectation}, $\expe \langle \xi | a(t) | \xi \rangle \geq x_i(t)$ for $t \in {\textrm{supp}} x_i$. Summing over $i$, we obtain \eqref{eq:desired}.
\end{proof}

By duality, conclude:

\begin{cor}
Suppose $\cc X$ is a Banach lattice. Then $\MAX(\cc X)$ is matricial normal if and only if $\cc X$ is lattice isomorphic to an AL-space.
\end{cor}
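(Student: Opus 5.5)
The plan is to deduce this corollary from \Cref{AM-generating} by duality, using \Cref{gen to nor}, \Cref{nor to gen}, and the min--max duality of \Cref{max*=min} and \Cref{mix*=max}. Note that a Banach lattice $\cc X$ is $1$-generating (hence $\cc X_h$ is generating), so \Cref{mix*=max} applies. Thus $\MAX(\cc X)^\flat = \MIN(\cc X^\flat)$ and $\MIN(\cc X)^\flat = \MAX(\cc X^\flat)$ as matricial order operator spaces.

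\emph{Forward direction.} Suppose $\MAX(\cc X)$ is $C$-matricial normal. By \Cref{nor to gen}(1), its dual $\MAX(\cc X)^\flat = \MIN(\cc X^\flat)$ is $C$-matricial generating. Since $\cc X^\flat$ is a Banach lattice, \Cref{AM-generating} shows that $\cc X^\flat$ is lattice isomorphic to an AM-space. The remaining step is to transfer this back to $\cc X$: the dual being an AM-space should force $\cc X$ to be lattice isomorphic to an AL-space. I would argue via the disjointness characterizations recalled above (\cite[Theorem 2.1.12]{M-N} and \cite[Theorem 1.b.12]{LT2}). Take disjoint $x_1, \ldots, x_n \in \cc X^+$, and choose norming positive functionals $x_i^\flat$ that may be assumed disjoint (for instance by restricting to the carriers, i.e.\ using band projections in $\cc X^\flat$). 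The AM-type inequality $\|\sum x_i^\flat\| \leq C$ then gives $\|\sum x_i\| \geq C^{-1}\sum\|x_i\|$. Alternatively one can cite the isomorphic version of \cite[Theorem 4.23]{Al-Bur}.

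\emph{Backward direction.} If $\cc X$ is lattice isomorphic to an AL-space $L_1(\mu)$, then $\cc X^\flat$ is lattice isomorphic to the AM-space $L_\infty(\mu)$ (or its complexification). By \Cref{AM-generating}, $\MIN(\cc X^\flat)$ is matricial generating, and this matricial generation is with respect to the norm of $\cc X^\flat$ itself, since a lattice isomorphism only changes constants. Now \Cref{gen to nor}, applied with the dual $\cc X^\flat$ in the generating role, yields that $\MAX(\cc X)$ is matricial normal, because $\MAX(\cc X)^\flat = \MIN(\cc X^\flat)$. One point needs checking here. \Cref{AM-generating} is stated up to lattice isomorphism, so I must confirm that matricial generation of $\MIN$ is stable under lattice isomorphisms. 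It is: a lattice isomorphism $T$ is bipositive, hence completely bipositive between the $\MIN$ structures by \Cref{min passes to subspace}(2), and $T$ is completely bounded between $\MIN$ structures with $\|T\|_{cb} = \|T\|$. The generating constant therefore changes by at most $\|T\|\|T^{-1}\|$.

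I expect the main obstacle to be the lattice-theoretic transfer in the forward direction, namely that $\cc X^\flat$ being isomorphic to an AM-space implies $\cc X$ is isomorphic to an AL-space. I will handle it with the disjoint-functional argument sketched above, relying on the cited AL/AM duality. The operator-space parts are direct applications of the earlier duality results.
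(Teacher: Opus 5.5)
Your proposal is correct and is essentially the paper's argument. The paper just says ``by duality,'' meaning the chain $\MAX(\cc X)$ matricial normal $\Leftrightarrow$ $\MAX(\cc X)^\flat = \MIN(\cc X^\flat)$ matricial generating (via \Cref{nor to gen}, \Cref{gen to nor}, \Cref{max*=min}) $\Leftrightarrow$ $\cc X^\flat$ lattice isomorphic to an AM-space (\Cref{AM-generating}) $\Leftrightarrow$ $\cc X$ lattice isomorphic to an AL-space; you have filled in the details the paper leaves implicit, namely the disjoint-functional transfer and the stability of generation of $\MIN$ under lattice isomorphisms (\Cref{mix*=max} is not actually needed, and for non-$\sigma$-finite $\mu$ the space $L_1(\mu)^\flat$ need not be $L_\infty(\mu)$, but it is an AM-space regardless, which is all you use).
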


\printbibliography
\end{document}